\newcommand{\AuthorOne}{\textbf{Marvin Dippell}}
\newcommand{\AuthorTwo}{\textbf{Chiara Esposito}}
\newcommand{\AuthorThree}{\textbf{Jonas Schnitzer}}
\newcommand{\AuthorAddressOne}{
        \begin{minipage}{8cm}
            \centering\small
            Dipartimento di Matematica\\
            Università degli Studi di Salerno\\
            via Giovanni Paolo II, 123\\
            84084 Fisciano (SA)\\
            Italy
        \end{minipage}
        \\
        }
\newcommand{\AuthorAddressTwo}{\AuthorAddressOne}
\newcommand{\AuthorAddressThree}{
        \begin{minipage}{8cm}
            \centering\small
            Dipartimento di Matematica "Felice Casorati"\\
            Università degli Studi di Pavia\\
            Via Ferrata 5\\
            27100 Pavia\\
            Italy
        \end{minipage}
        \\
}
\newcommand{\AuthorEmailOne}{\texttt{mdippell@unisa.it}}
\newcommand{\AuthorEmailTwo}{\texttt{chesposito@unisa.it}}
\newcommand{\AuthorEmailThree}{\texttt{jonaschristoph.schnitzer@unipv.it}}
\author{\AuthorOne\thanks{\AuthorEmailOne}, \,
        \AuthorTwo\thanks{\AuthorEmailTwo},\\[0.2cm]
        \AuthorAddressTwo
        \\[-0.1cm]
        \AuthorThree\thanks{\AuthorEmailThree},\\[0.2cm]
        \AuthorAddressThree
}
\newcommand{\Naturals}{\mathbb{N}}
\newcommand{\Integers}{\mathbb{Z}}
\newcommand{\Reals}{\mathbb{R}}
\newcommand{\ComplexNum}{\mathbb{C}}
\newcommand{\VectFields}{\mathfrak{X}}
\newcommand{\euler}{\mathcal{E}}
\newcommand{\Jmap}{\mathcal{J}}
\newcommand{\poly}{\mathrm{poly}}
\newcommand{\vanishing}{\mathcal{J}}
\newcommand{\normalizer}{\mathcal{N}}
\newcommand{\Ann}{\mathrm{Ann}}
\newcommand{\Cohom}{\operator{H}}
\newcommand{\HC}{\operator{HC}}
\newcommand{\HCdiff}{\HC_{\script{diff}}}
\newcommand{\hkr}{\operator{hkr}}
\newcommand{\hkrInv}{\hkr^{-1}_{\scriptscriptstyle\nabla}}
\newcommand{\hkrHomo}{H_{\scriptscriptstyle\nabla}}
\newcommand{\Def}{\operatorname{Def}}
\newcommand{\homogen}{{\scriptscriptstyle{\mathrm{hom}}}}
\newcommand{\Total}{{\scriptscriptstyle{\mathrm{T}}}}
\newcommand{\Normal}{{\scriptscriptstyle{\mathrm{N}}}}
\newcommand{\proj}{{\scriptscriptstyle{\mathrm{proj}}}}
\newcommand{\dbltilde}[1]{\tilde{\raisebox{0pt}[0.85\height]{$\tilde{#1}$}}}%
\title{Classification and Reduction of Homogeneous Star Products}
\date{}
\begin{document}

\selectlanguage{english}

\maketitle	

\begin{abstract}
We present a classification of homogeneous star products
on duals of Lie algebroids in terms of the second Lie algebroid cohomology.
Moreover, we extend this classification to projectable star products, i.e. to quantizations compatible with (coisotropic) reduction.
This implies that quantization commutes with reduction in the considered setting.
\end{abstract}

\tableofcontents

\section{Introduction}
\label{sec:Introduction}

Deformation quantization as introduced by Bayen et al. \cite{bayen.et.al:1977a}
aims to formalize the quantization of classical mechanical systems by studying formal deformations of the algebra of smooth 
functions $\Cinfty(M)$ on a given Poisson manifold $(M,\pi)$,
so called star products.
Here $\Cinfty(M)$ is interpreted as the observable algebra on the
classical phase space $(M,\pi)$,
while the algebra $\Cinfty(M)\formal{\hbar}$ equipped with a star product 
$\star$ models the algebra of quantum observables. 
The existence and classification of star products has been studied in many 
different context, see e.g.
\cite{fedosov:1994a,gutt:1983a,dewilde.lecomte:1983a},
and has famously been solved in full generality by Kontsevich
\cite{kontsevich:2003a}.

Mechanical systems are often endowed with first class constraints,
which are modelled by coisotropic submanifolds $C \subseteq (M,\pi)$.
This data allows to reduce the Poisson manifold $(M,\pi)$
to a Poisson manifold $(M_\red,\pi_\red)$ of smaller dimension,
the reduced phase space.

Quantizing a mechanical system with first class constraints poses now the 
following questions:
\begin{cptitem}
	\item Is there a \emph{projectable} star product quantizing $(M,\pi)$,
	i.e. a star product $\star$ which induces a quantization $\star_\red$
	of the reduced phase space $(M_\red,\pi_\red)$?
	\item How can projectable star products be classified?
\end{cptitem}
The question if quantization commutes with reduction in this sense, originally formulated for geometric quantization, has been 
studied in different contexts, e.g. \cite{meinrenken:1996a,guillemin.sternberg:1982a}.

One of the few examples in which it is known how to decide if a star product 
is equivalent to a projectable one is the symplectic case, treated in
\cite{bordemann:2004a:pre,reichert:2017a,kowalzig.neumaier.pflaum:2005a}.
In the situation of general Poisson manifolds a famous counterexample
by Willwacher \cite{willwacher:2007a} shows
that one cannot expect that every Poisson structure admits a
reducible quantization.
The aim of this paper is to investigate the special case of linear
Poisson structures on vector bundles, or equivalently
on duals of Lie algebroids.
A natural choice for quantizations of linear Poisson structures
is given by \emph{homogeneous star products}, which can be seen as star
products compatible with the linear structure.
These star products have been extensively studied in the literature,
see e.g. \cite{neumaier.waldmann:2003a,bordemann.neumaier.waldmann:1998a,bordemann.neumaier.waldmann:1999a}.
It is important to remark that two well-known constructions of star product
on cotangent bundles (see \cite{dewilde.lecomte:1983a,gutt:1983a})  are in fact homogeneous. 
With the help of a recent improvement of the classical 
Hochschild-Kostant-Rosenberg Theorem
\cite{dippell.esposito.schnitzer.waldmann:2024a:pre}
we construct and classify homogeneous 
star products in terms of the underlying second Lie algebroid cohomology,
by assigning to every homogeneous star product $\star$ its \emph{characteristic class $\Phi(\star)$}.
Moreover, we compute the characteristic classes of known constructions.

In view of the classification of homogeneous star products it
seems natural to expect that the projectability of a star product should be 
reflected on its characteristic class.
In fact we find a condition for a star product to be equivalent to a 
projectable one, which turns out to be only a property of its characteristic
class.
To be more precise, the characteristic class of a projectable star product
has to be projectable itself.

However, we observe by an explicit geometric example that two projectable star products, which are
equivalent, can induce non-equivalent star products on the reduced space.
To remedy this, we need to restrict ourselves to
projectable equivalences.
We then provide a classification of projectable star products up to 
projectable equivalences, obtained by the cohomology of a subcomplex of the 
Chevalley-Eilenberg complex of the Lie algebroid.
It turns out that assigning a projectable star product to a given class can be 
understood as quantization.
This raises the question if quantization commutes with reduction in this 
setting.
Indeed, we can answer this question positively, showing that
the reduction of the assigned star product is equivalent to
the quantization of the reduced class.

One remarkable feature of homogeneous star products is that the
(fibrewise- and $\hbar$-)polynomial functions form a subalgebra.
These polynomial function can be evaluated at every complex value of $\hbar$.
Star products possessing this feature are the essential ingredient in the 
study of convergent deformation quantization as treated in
\cite{heins.roth.waldmann:2023a,waldmann:2019a,esposito.stapor.waldmann:2017a,esposito.schmitt.waldmann:2019a}.
Thus our results provide a promising starting point to study reduction in the 
convergent setting.

\paragraph{Organization of the paper}
After collecting some preliminary definitions and results on star products and 
homogeneity on vector bundles in \autoref{sec:Preliminaries} we prove the 
existence of homogeneous star products in \autoref{sec:HomoStarProdExistence}.
Afterwards, we provide a classification in
\autoref{sec:ClassificationHomStarProd} and compare our results with various 
known constructions in \autoref{sec:Comparison}.

In \autoref{sec:ReductionHomStarProd} we turn to projectable star products.
As an intermediate step towards their classification we need to study 
representable star products in \autoref{sec:PresentableStarProd}.
Finally, we provide a full classification of projectable star products in 
\autoref{sec:ClassificationProjStarProd} and
show that reduction commutes with quantization in this setting.

\section*{Acknowledgments}

C.E. was supported by the
National Group for Algebraic and Geometric Structures, and
their Applications (GNSAGA -- INdAM). 
This paper has been supported from Italian Ministerial grant PRIN 2022 "Cluster algebras and Poisson Lie groups", n. 20223FEA2E - CUP D53C24003330006.

\section{Preliminaries}
\label{sec:Preliminaries}

\subsection{Star Products and Hochschild Complex}
\label{sec:StarProducts}

We collect some well-known definitions and results concerning star products and the Hochschild complex for the algebra $\Cinfty(M)$ of smooth complex-valued functions on a manifold $M$.
All of this can be found in the original papers
\cite{gerstenhaber:1963a,gerstenhaber:1964a} or in the text book \cite{waldmann:2007a}.

\begin{definition}[Star Product]
	\label{def:StarProduct}
Let $M$ be a manifold. 
\begin{definitionlist}
	\item A \emph{star product} $\star$ on $M$ is an associative $\ComplexNum\formal{\hbar}$-bilinear product on $\Cinfty(M)\formal{\hbar}$, such that 
	for all $f,g\in \Cinfty(M)$
	\begin{equation}
		f \star g
		= fg + \sum_{r=1}^\infty \hbar^r C_{r}(f,g), 
		\quad\text{and}\quad
		f \star 1 = f = 1 \star f,
	\end{equation}
	where $C_r$ are bidifferential operators for all $r \geq 1$.
	If the associativity of $\star$ only holds up to order $k$ in $\hbar$ we call it a 
	\emph{star product up to order $k$}.
	\item Two star products $\star$ and $\tilde{\star}$ are called \emph{equivalent}, if there exists a series
	$S = \id + \sum_{r=1}^\infty \hbar^r S_r \in \Diffop(M)\formal{\hbar}$,
	such that for all $f,g \in \Cinfty(M)\formal{\hbar}$
	it holds
	\begin{equation}
		\label{eq:EquivalenceStarProd}
		S(f\star g) = S(f) \mathbin{\tilde{\star}} S(g). 
	\end{equation}
	If \eqref{eq:EquivalenceStarProd} holds only up to order $k$ in $\hbar$
	we call $S$ an \emph{equivalence up to order $k$}.
\end{definitionlist}
\end{definition}
In the following we will often denote the pointwise product on $\Cinfty(M)$ by $\mu_0$, thus write
$\star = \mu_0 + \sum_{r=1}^{\infty} \hbar^r C_r$.
For a bidifferential operator $D$ we denote by 
\begin{equation}
	\label{eq:SymAntiSymParts}
	D^+(f,g)
	\coloneqq D(f,g) + D(g,f)
	\quad\text{and}\quad
	D^-(f,g)
	\coloneqq D(f,g) - D(g,f)
\end{equation}
its symmetrization and anti-symmetrization, respectively.
Then an easy check shows that every star product $\star$ induces a Poisson structure on $M$ with Poisson bracket given by
\begin{equation}
	\I \{f,g\} \coloneqq C_1^-(f,g),
\end{equation}
and equivalent star products induce the same Poisson bracket.

Star products can be seen as Maurer-Cartan elements of a differential graded Lie algebra, which we will introduce 
in the following. 
For $n\geq 0$, we  consider the subspace 
\begin{equation}
  \HCdiff^n(M)
	\subseteq 
  \Hom\bigl(\Cinfty(M)^{\tensor n+1},\Cinfty(M)\bigr)
  \eqqcolon \HC^n\bigl(\Cinfty(M)\bigr)
\end{equation}
of polydifferential operators which vanish on constants and for $n=-1$ we set $\HCdiff^{-1}(M)=\Cinfty(M)$. 
The concatenation of two elements $D,E\in \HC^\bullet(M)$
is defined by 
\begin{equation}
  D \circ E (f_0,\dots,f_{k+\ell}) 
  = 
	\sum_{i=0}^{\abs{ D}} (-1)^{i \abs{ E}} 
	D(f_0,\dots, f_{i-1}, E(f_i,\dots,f_{i+\ell}),f_{i+\ell+1},\dots,f_{k+\ell}),
\end{equation}
and the \emph{Gerstenhaber bracket} is the graded commutator with respect to this product 
(up to a sign)
\begin{equation}
  \label{eq:GerstenhaberBracketClassical}
  [D,E]
	=
	(-1)^{\abs{D}\abs{E}} \left(D \circ E - (-1)^{\abs{D} \abs{E}} E \circ D\right).
\end{equation}
This turns $\HC(M)$ into a graded Lie algebra.
and $\HCdiff(M)$ into a graded Lie subalgebra.
Note that we use the sign convention from \cite{bursztyn.dolgushev.waldmann:2012a}, 
not the original one from \cite{gerstenhaber:1963a}.

The pointwise product of functions
$\mu_0 \in \HC^1(\Cinfty(M))$ is not an element of $\HCdiff^1(M)$, 
since it does not vanish on constants, but one can check that 
$\del \colon \HC^\bullet(M) \to \HC^{\bullet +1}(M)$ defined via 
$\del = [\mu_0, \argument]$
is a differential, the so-called \emph{Hochschild differential},
which restricts to $\HCdiff^\bullet(M)$.
Explicitly $\del$ is given by
\begin{equation}
			\label{eq:HochschildDiff}
	\begin{split}
	(\del D)(f_0, \dotsc, f_{n+1})
	&= f_0 \cdot D(f_1, \dotsc, f_{n+1})
	+ (-1)^{n} D(f_0, \dotsc, f_n) \cdot f_{n+1}
	\\
	&\phantom{=}+ \sum_{i=0}^n (-1)^{i+1} D(f_0,\dotsc, f_i \cdot f_{i+1},\dotsc,f_{n+1}),
	\end{split}
\end{equation}
for $D \in \HC^n(M)$ and $f_0, \dotsc, f_{n+1} \in \Cinfty(M)$.
Moreover $\del$ is compatible with the Gerstenhaber bracket and turns 
$(\HCdiff^\bullet(M),\del,[\argument,\argument])$
into a differential graded Lie algebra. 

One can show, see e.g. \cite{waldmann:2007a}, that the associativity of a star product 
$\star = \mu_0 + \sum_{r = 1}^\infty \hbar C_r$
is equivalent to the statement that
\begin{equation}
	\label{eq:MCOrd}
	\del C_{r} +\frac{1}{2}\sum_{\ell=1}^{r-1}[C_\ell,C_{r-\ell}]
	= 0
\end{equation}
holds for all $r \geq 1 $.
Moreover, $\star$ is a star product up to order $k$ if and only if \eqref{eq:MCOrd}
is satisfied for all $r \leq k$.
To construct a star-product one can try to construct it 
order-by-order.  
The following well-known result (see \cite{gerstenhaber:1964a,waldmann:2007a}) 
clarifies, when one can extend a star product up 
to order $k$ to a star product up to order $k+1$.

\begin{lemma}
\label{lem:DefTheorystuff}
	Let
	$\star = \mu_0 +\sum_{r = 1}^k\hbar^r C_r$
	be a star product up to order $k$, then 
	\begin{equation}
		R_{k+1} \coloneqq - \frac 12\sum_{\ell=1}^{k}[C_\ell,C_{k+1-\ell}]
	\end{equation}
	is closed, i.e. $\del R_{k+1} = 0$.
	Moreover, if there exist a $C_{k+1}\in \HCdiff^1(M)$, such that 
	$\del C_{k+1}
	= R_{k+1}$,
	then 
	$\mu_0 + \sum_{r= 1}^{k+1}\hbar^r C_r$
	is a star product up to order $k+1$. 
\end{lemma}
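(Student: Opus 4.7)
The second assertion of the lemma is essentially tautological, so I would dispose of it first: the hypothesis $\del C_{k+1} = R_{k+1}$ is exactly the Maurer--Cartan equation \eqref{eq:MCOrd} at order $r = k+1$, so adjoining $\hbar^{k+1} C_{k+1}$ to the given truncation automatically yields a star product up to order $k+1$. The substance of the lemma is the closedness $\del R_{k+1} = 0$, which I would obtain by the standard DGLA obstruction argument: apply $\del$ to the Maurer--Cartan equations that hold at orders $\leq k$ and exploit graded Jacobi.

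To streamline the bookkeeping I would package the data as $F \coloneqq \sum_{r=1}^{k} \hbar^r C_r \in \HCdiff^1(M)\formal{\hbar}$, so that the hypothesis becomes the single formal identity
\[
  \del F + \frac{1}{2}[F, F] \equiv 0 \pmod{\hbar^{k+1}},
\]
and $R_{k+1}$ is by construction the coefficient of $\hbar^{k+1}$ in $-\frac{1}{2}[F, F]$. Now $\del = [\mu_0, \argument]$ squares to zero and is a graded derivation of the Gerstenhaber bracket; since $|F| = 1$, the graded Leibniz rule combined with graded skew-symmetry gives $\del[F, F] = 2[\del F, F]$. Consequently $\del R_{k+1}$ equals minus the coefficient of $\hbar^{k+1}$ in $[\del F, F]$.

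In that coefficient every factor $\del C_a$ has $1 \leq a \leq k$, so the hypothesis permits the substitution $\del C_a = -\frac{1}{2}\sum_{m+n=a}[C_m, C_n]$. The result is proportional to the fully symmetric triple sum
\[
  \sum_{\substack{m,n,b \geq 1 \\ m+n+b = k+1}} [[C_m, C_n], C_b].
\]
Since all $C_j$ sit in degree one, graded Jacobi reduces to the ordinary cyclic identity $[[A, B], C] + [[B, C], A] + [[C, A], B] = 0$. The index set is invariant under cyclic relabeling of $(m, n, b)$, so three copies of the above triple sum coincide with the termwise Jacobi sum, which vanishes identically; hence the triple sum is zero and $\del R_{k+1} = 0$. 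The only real pitfall I anticipate is keeping signs straight in the Leibniz, skew-symmetry, and Jacobi identities; once those are in place, the argument is entirely formal and works verbatim in any differential graded Lie algebra.
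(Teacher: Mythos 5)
Your proof is correct: the paper itself does not prove \autoref{lem:DefTheorystuff} but cites it as well known (Gerstenhaber, Waldmann), and your formal DGLA obstruction argument --- packaging the truncation as $F$, using that $\del$ is a degree-one derivation of the bracket so $\del[F,F]=2[\del F,F]$, substituting the lower-order Maurer--Cartan equations, and killing the resulting triple sum by the cyclic Jacobi identity for degree-one elements --- is exactly the standard argument the citation refers to. The only points worth making explicit are that the paper's sign convention for the Gerstenhaber bracket is harmless here because the text already asserts that $(\HCdiff^\bullet(M),\del,[\argument,\argument])$ is a DGLA and that associativity up to order $k$ is equivalent to \eqref{eq:MCOrd} for $r\le k$, and that adjoining $\hbar^{k+1}C_{k+1}$ does not disturb the equations at orders $\le k$, which makes your "tautological" second step fully rigorous.
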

This lemma shows that the cohomology with respect to $\del$ is of clear 
interest for the order-by-order construction of a star product. 
Fortunately, the well-known Hochschild-Kostant-Rosenberg Theorem \cite{hochschild.kostant.rosenberg:1962a} computes the Hochschild cohomology.
We will need a slightly stronger result, giving a deformation retract for the Hochschild complex, see \cite{dippell.esposito.schnitzer.waldmann:2024a:pre}.
To formulate this we introduce a map $\hkr$ connecting multivector fields 
$\VectFields^\bullet(M) \coloneqq \Secinfty(\Anti^{\bullet}TM)$
to polydifferential operators.
This map 
$\hkr \colon \VectFields^{\bullet+1}(M) \to \HCdiff^\bullet(M)$
is called the \emph{Hochschild-Kostant-Rosenberg map} and is defined by 
\begin{equation}
	\hkr(X_0 \wedge \dots \wedge X_k)(f_0, \dots, f_k)
	\coloneqq
	\frac{1}{(k+1)!} \sum_{\sigma\in S_{k+1}}
	\sign(\sigma) \cdot 
	\Lie_{X_{\sigma(0)}}(f_0) \cdots \Lie_{X_{\sigma(k)}}(f_k).
\end{equation}	  
In fact this map is actually a chain map (even a quasi-isomorphism), if we endow $\VectFields^\bullet(M)$ with the trivial differential.
Note that the map $\hkr$ is by no means a morphism of differential graded Lie algebras.

\begin{theorem}[Hochschild-Kostant-Rosenberg Theorem \cite{dippell.esposito.schnitzer.waldmann:2024a:pre}]
    \label{thm:classicalHKR}%
    Let $M$ be a manifold and let $\nabla$ be a torsion-free covariant
    derivative on $M$.  Then there exist
	$\hkrHomo \colon \HCdiff^\bullet(M) \to \HCdiff^{\bullet-1}(M)$ and 
	$\hkrInv \colon \HCdiff^\bullet(M) \to \VectFields^{\bullet+1}(M)$
	such that
    \begin{equation}
        \label{eq:HKRDeformationRetract}
        \begin{tikzcd}[column sep = large]
            \VectFields^{\bullet+1}(M)
            \arrow[r,"\hkr", shift left = 3pt]
            &\bigl( \HCdiff^{\bullet}(M),\del \bigr)
            \arrow[l,"\hkrInv", shift left = 3pt]
            \arrow[loop,
            out = -30,
            in = 30,
            distance = 30pt,
            start anchor = {[yshift = -7pt]east},
            end anchor = {[yshift = 7pt]east},
            "\hkrHomo"{swap}
            ]
        \end{tikzcd}
    \end{equation}
    is a deformation retract, i.e. the following holds:
    \begin{theoremlist}
		\item \label{thm:classicalHKR_1}
			We have $\hkrInv \circ \hkr = \id$.
		\item \label{thm:classicalHKR_2}
			We have
			$\del \hkrHomo + \hkrHomo \del
			= \id - \hkr \circ \hkrInv$.			
    \end{theoremlist}
	Moreover, if the connection is invariant under a Lie algebra action,
	the maps $\hkrInv$ and $\hkrHomo$ 
	are equivariant with respect to this action. 
\end{theorem}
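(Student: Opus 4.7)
The plan is to use the torsion-free connection $\nabla$ to build a \emph{total symbol} isomorphism that turns $\HCdiff^\bullet(M)$, slot by slot, into sections of tensor products of symmetric powers of $TM$, and then to transport the classical Koszul contraction between symmetric and exterior algebras back to the Hochschild side.

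First I would introduce the total symbol map
\[
\sigma_\nabla \colon \HCdiff^n(M) \longrightarrow \Secinfty\Bigl(\bigotimes\nolimits^{n+1} S^{\geq 1}TM\Bigr),
\]
where $S^{\geq 1}TM$ denotes the symmetric tensor algebra of $TM$ with the scalars removed, defined on $D \in \HCdiff^n(M)$ by iteratively extracting symmetrized $\nabla$-derivatives in each argument slot. A Borel/Taylor argument in $\nabla$-normal coordinates shows that $\sigma_\nabla$ is a $\Cinfty(M)$-linear isomorphism (the vanishing-on-constants condition matches the $S^{\geq 1}$ restriction), and by unwinding definitions, $\hkr$ corresponds exactly to the inclusion of $\Secinfty(\Anti^{n+1}TM)$ as the totally antisymmetric piece of multidegree $(1,\dots,1)$.

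Second, I would transfer the classical Koszul contraction. On the symbol side, each fibre carries a canonical Koszul differential $d_{\mathrm{K}}$, an antisymmetrizer $A$ projecting onto $\Anti^{n+1}TM$, and an explicit Koszul homotopy $h$ satisfying $[d_{\mathrm{K}},h] = \id - A$ — all given by elementary linear-algebra formulas on tensors. Conjugating $A$ by $\sigma_\nabla$ yields a natural candidate for $\hkrInv$, and since $\hkr$ lands in the subspace on which $A$ acts as the identity, the retract identity $\hkrInv \circ \hkr = \id$ follows at once.

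The main obstacle is the homotopy identity $\del\hkrHomo + \hkrHomo\del = \id - \hkr\circ\hkrInv$. Under $\sigma_\nabla$, the Hochschild differential $\del$ matches $d_{\mathrm{K}}$ only modulo strictly lower-order corrections coming from the curvature of $\nabla$. I would therefore set up a descending filtration by total symbol degree and correct the naive candidate $\sigma_\nabla^{-1} \circ h \circ \sigma_\nabla$ order by order: the discrepancy $\id - \hkr\circ\hkrInv - \del\hkrHomo - \hkrHomo\del$ strictly lowers symbol degree after each iteration, and since the filtration is bounded below on each fixed differential order the correction process terminates, yielding the desired $\hkrHomo$. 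Equivariance is then automatic: every ingredient — $\sigma_\nabla$, $A$, $h$, and the iterative correction — is constructed canonically from $\nabla$ together with universal linear-algebra operations on $TM$, so any Lie algebra action preserving $\nabla$ commutes with all of them.
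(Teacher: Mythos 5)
First, a point of comparison: this theorem is not proved in the present paper at all — it is quoted from the cited preprint of Dippell--Esposito--Schnitzer--Waldmann — so there is no internal proof to measure your argument against. Your sketch (total symbol calculus with respect to the torsion-free $\nabla$, fibrewise Koszul contraction onto the antisymmetric multidegree-$(1,\dots,1)$ part, then a filtration argument to absorb the curvature corrections) is the natural strategy and is in the spirit of that reference, and your identification of $\hkr$ with the inclusion of the antisymmetric $(1,\dots,1)$ component is correct.

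However, as written there is a genuine gap at the central step. You freeze $\hkrInv$ once and for all as the conjugated antisymmetrizer and propose to repair only $\hkrHomo$ order by order. For the homotopy identity $\del\hkrHomo + \hkrHomo\del = \id - \hkr\circ\hkrInv$ to admit any solution it is necessary that $\id - \hkr\circ\hkrInv$ commute with $\del$, i.e. that $\hkrInv\circ\del = 0$ (using $\del\circ\hkr = 0$); otherwise no choice of $\hkrHomo$, corrected or not, can work, since $[\del,\hkrHomo]$ always commutes with $\del$. This is exactly the point where the curvature enters: $\del$ agrees with the fibrewise Koszul differential only up to curvature corrections in the generalized Leibniz rule for symmetrized covariant derivatives of a product, and a priori such corrections could contribute to the antisymmetrized multidegree-$(1,\dots,1)$ part of a coboundary. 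You must either prove this vanishing directly (for instance by observing that the bidegree-$(1,1)$ cross terms of the symbol of $f\mapsto D(\dots,fg,\dots)$ are symmetric under exchange of $f$ and $g$, curvature terms included, so total antisymmetrization kills them) or, more robustly, allow the perturbation to correct $\hkrInv$ as well — the theorem only asserts existence of some $\hkrInv$ depending on $\nabla$.

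Relatedly, your termination argument is incomplete: the assertion that the discrepancy "strictly lowers symbol degree after each iteration" presupposes that at every step the leading term of the discrepancy is exact for the fibrewise differential, i.e. has no cohomology component in antisymmetric multidegree $(1,\dots,1)$. That does not follow merely from degree bookkeeping; it needs the chain-map property above together with a cohomological argument, and the cleanest way to package all of this is the standard homological perturbation lemma applied to the fibrewise contraction. That route also explains for free why $\hkr$ itself needs no correction (its image consists of cocycles both for $\del$ and for the fibrewise differential) and why the induced differential on $\VectFields^{\bullet+1}(M)$ vanishes (nothing can drop below multidegree $(1,\dots,1)$). Once the construction is made canonical in $\nabla$ in this way, your equivariance remark is fine.
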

Using the Hochschild-Kostant-Rosenberg Theorem, we can even refine a bit the 
methods to check, if a closed element is exact. 

\begin{lemma}
\label{lem:Skewstuff}
	Let $D\in \HCdiff^k(M)$ be closed, i.e. $\del D = 0$. 
	\begin{lemmalist}
		\item The total anti-symmetrization $\AntiSymmetrizer(D)$ of $D$, defined by
		\begin{equation}
			\AntiSymmetrizer(D)(f_1, \dotsc, f_{k+1})
			\coloneqq \frac{1}{(k+1)!}\sum_{\sigma \in S_{k+1}} \sign(\sigma) \cdot 
			D(f_{\sigma(1)},\dotsc, f_{\sigma(k+1)}),
		\end{equation}
		is given by
		$\AntiSymmetrizer(D) = \hkr \hkrInv D$. 
		\item If $\AntiSymmetrizer(D) = 0$, then $D$ is exact with
		$D = \del \hkrHomo D$.  
	\end{lemmalist}
\end{lemma}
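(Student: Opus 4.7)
The plan is to reduce both claims to the homotopy identity of \autoref{thm:classicalHKR_2}. Since $D$ is closed, that identity specializes to
\begin{equation*}
\del \hkrHomo D = D - \hkr \hkrInv D,
\end{equation*}
so once I understand how anti-symmetrization interacts with the two sides, everything will follow.

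For part (i), I would first observe, directly from the defining formula of $\hkr$, that $\hkr(\hkrInv D)$ is already totally anti-symmetric in its arguments, so $\AntiSymmetrizer(\hkr \hkrInv D) = \hkr \hkrInv D$. Applying $\AntiSymmetrizer$ to the displayed identity then reduces the claim to the vanishing of $\AntiSymmetrizer \circ \del$ on $\HCdiff^\bullet(M)$, which I expect to be the only genuine obstacle. Writing $\del E$ out via \eqref{eq:HochschildDiff}, the two outer terms $f_0 E(f_1, \dots, f_{n+1})$ and $(-1)^n E(f_0, \dots, f_n) f_{n+1}$ should cancel after anti-symmetrization: reindexing the summation variable by the length-$(n+2)$ cyclic permutation that moves the last slot to the first contributes an extra sign $(-1)^{n+1}$, and combined with the leading $(-1)^n$ this produces precisely the negative of the first term. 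Each inner term $(-1)^{i+1} E(f_0, \dots, f_i f_{i+1}, \dots, f_{n+1})$ vanishes under anti-symmetrization because the transposition $(i, i+1)$ acts trivially on the product $f_i f_{i+1}$ (by commutativity of the pointwise product) while contributing a sign $-1$, so permutations pair up and cancel in pairs.

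For part (ii), once (i) is in hand the hypothesis $\AntiSymmetrizer(D) = 0$ forces $\hkr \hkrInv D = 0$, at which point the homotopy identity collapses to $D = \del \hkrHomo D$, exhibiting $\hkrHomo D \in \HCdiff^{k-1}(M)$ as an explicit primitive of $D$.
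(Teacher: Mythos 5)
Your proposal is correct and follows essentially the same route as the paper: apply the homotopy identity $D = \hkr\hkrInv D + \del\hkrHomo D$ for closed $D$, use that $\AntiSymmetrizer \circ \del = 0$ and that the image of $\hkr$ is already totally anti-symmetric to get (i), and then observe that $\AntiSymmetrizer(D)=0$ kills the $\hkr\hkrInv D$ term to get (ii). The only difference is that you verify $\AntiSymmetrizer \circ \del = 0$ by a direct (and correct) computation, whereas the paper simply cites it as well known.
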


\begin{proof}
Since $D$ is closed, we have by \autoref{thm:classicalHKR} that 
$D = \hkr \hkrInv D+ \del \hkrHomo D$.
It is well-known that $\mathrm{Alt} \circ \del = 0$, see e.g. \cite{waldmann:2007a}, and therefore we get 
$\mathrm{Alt}(D)=\mathrm{Alt}(\hkr \hkrInv D)=\hkr \hkrInv D$,
where we used in the last step that the image of the Hochschild-Kostant-Rosenberg map is already 
totally anti-symmetric.
The second statement then follows by using again the homotopy formula from \autoref{thm:classicalHKR}. 
\end{proof}

\subsection{Homogeneity on Vector Bundles}
\label{sec:HomogeneityOnVB}

Let $\pr \colon E \to M$ be a vector bundle.
We define the so-called
\emph{Euler vector field} field
$\euler \in \VectFields(E)$
by its flow as
\begin{equation}
	\euler\at{e_p}=\tfrac{\D}{\D t}\at{t=0} \E^t e_p
\end{equation}
for $p \in M$ and $e_p \in E_p$.
A function $F \in \Cinfty(E)$ is now called \emph{homogeneous of degree $k$},
if it is an eigenvector of $\Lie_\euler$ to the eigenvalue $k$, i.e.
\begin{equation}
	\Lie_\euler F = k F.
\end{equation}
We denote by $\Pol^k(E) \subseteq \Cinfty(E)$ the subspace of all homogeneous functions of degree
$k$.
Note that $F$ is homogeneous of degree $0$ if and only if it is given by
$F = \pr^*f$ for some $f \in \Cinfty(M)$.

Alternatively, we can characterize homogeneous functions using the map
$\Jmap \colon \Secinfty(E^*) \to \Cinfty(E)$,
which is defined pointwise by  
$\Jmap(\alpha)(e_p) = \alpha_p(e_p)$, 
for all $p \in M$ and $e_p \in E_p$.
After extending $\Jmap$ to an algebra morphism
$\Jmap \colon \Secinfty(\Sym E^*) \to \Cinfty(E)$
one can show that the image of $\Secinfty(\Sym^k E^*)$ under $\Jmap$ is exactly given by
$\Pol^k(E)$.

Generalizing this idea we say that a multivector field $X \in \VectFields^n(E)$ is
\emph{homogeneous of degree $k$}
if it satisfies
\begin{equation}
	\Lie_\euler X = k X.
\end{equation}
An important observation is that the possible degrees of homogeneity depend on the degree of the multivector field.

\begin{proposition}
Let $E \to M$ be a vector bundle.
\begin{propositionlist}
	\item Let $F \in \Pol^k(E)$ be non-zero and homogeneous of degree $k \in \Reals$.
	Then $k \in \Naturals_0$.
	\item If $X \in \VectFields^n(E)$ be non-zero and homogeneous of degree $k \in \Reals$.
	Then $k \in \Integers$ and $k \geq -n$.
\end{propositionlist}
\end{proposition}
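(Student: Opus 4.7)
The plan is to leverage the flow of the Euler vector field $\euler$, which integrates the eigenvalue condition into a rescaling relation, and then exploit smoothness at the zero section to force integrality.

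For part (i), I would first note that $\euler$ is complete and its flow is $\Phi_t(e_p) = \E^t e_p$, so the condition $\Lie_\euler F = k F$ integrates to
\begin{equation*}
    F(\E^t e_p) = \E^{kt}\, F(e_p) \qquad \text{for all } t \in \Reals,\ p \in M,\ e_p \in E_p.
\end{equation*}
Setting $s = \E^t > 0$ yields $F(s e_p) = s^k F(e_p)$ on the ray through $e_p$. Since $F$ is non-zero, pick $e_p$ with $F(e_p) \neq 0$ and consider the smooth function $g \colon \Reals \to \ComplexNum$, $g(s) \coloneqq F(s e_p)$, which is defined on all of $\Reals$ because $E_p$ is a vector space. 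On the half-line $s > 0$ we have $g(s) = s^k F(e_p)$. Letting $s \to 0^+$ forces $k \geq 0$, for otherwise $g(s)$ would blow up while $g(0) = F(0_p)$ is finite. Then a standard differentiability argument excludes non-integer $k > 0$: if $k \in (m, m+1)$ with $m \in \Naturals_0$, the $(m+1)$-th one-sided derivative of $s \mapsto s^k F(e_p)$ at $s = 0$ does not exist, contradicting the smoothness of $g$. Hence $k \in \Naturals_0$.

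For part (ii), I would reduce to local adapted coordinates $(x^a, \xi^i)$ on a trivializing chart $U \times \Reals^r$ of $E$, in which $\euler = \xi^i \partial_{\xi^i}$. The coordinate vector fields satisfy $[\euler, \partial_{x^a}] = 0$ and $[\euler, \partial_{\xi^i}] = -\partial_{\xi^i}$, so for each multi-index $(I, A)$ with $\abs{I} + \abs{A} = n$ the basis $n$-vector
\begin{equation*}
    \partial_{\xi^{i_1}} \wedge \cdots \wedge \partial_{\xi^{i_p}} \wedge \partial_{x^{a_1}} \wedge \cdots \wedge \partial_{x^{a_q}}
\end{equation*}
is homogeneous of degree $-p$, with $p = \abs{I} \in \{0, 1, \ldots, n\}$. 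Writing $X$ locally as $X = \sum_{I, A} f_{I A}\, \partial_{\xi^I} \wedge \partial_{x^A}$, the Leibniz rule for $\Lie_\euler$ gives that $\Lie_\euler X$ decomposes into the same basis with coefficients $\Lie_\euler f_{IA} - \abs{I} f_{IA}$. Imposing $\Lie_\euler X = k X$ and using the linear independence of the basis terms forces
\begin{equation*}
    \Lie_\euler f_{I A} = (k + \abs{I})\, f_{I A}
\end{equation*}
for every index pair, so each non-zero coefficient $f_{IA}$ is homogeneous of degree $k + \abs{I}$.

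Applying part (i) to any non-vanishing $f_{IA}$ yields $k + \abs{I} \in \Naturals_0$; since such a non-zero coefficient exists (as $X$ itself is non-zero on some trivializing chart), we conclude $k \in \Integers$. Moreover $\abs{I} \leq n$ implies $k \geq -\abs{I} \geq -n$, completing the proof. The only slightly delicate step is ruling out non-integer positive exponents in part (i); everything else is either integration of a flow or a bookkeeping argument in local coordinates.
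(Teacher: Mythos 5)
Your proof is correct. Note that the paper states this proposition without proof, so there is no argument of the authors to compare against; your route --- integrating the Euler flow to get $F(s\,e_p) = s^k F(e_p)$ for $s>0$ and then using smoothness of $s \mapsto F(s\,e_p)$ at $s=0$ to exclude negative and non-integer exponents, followed by the fibrewise-coordinate decomposition $X = \sum_{I,A} f_{IA}\, \partial_{\xi^I} \wedge \partial_{x^A}$ with $\Lie_\euler f_{IA} = (k+\abs{I}) f_{IA}$ --- is the standard one and all steps check out. The only point worth a remark: in part (ii) you invoke part (i) for the coefficients $f_{IA}$, which are defined only on the trivializing chart $\pr^{-1}(U) \cong U \times \Reals^r$ rather than on all of $E$; this is harmless because the argument of part (i) is local along the fibres (the ray $\{s\,e_p\}$ through any point of the chart stays inside the chart, since the chart is a union of fibres), but since homogeneous functions on a chart need not extend to $E$, you should say explicitly that the proof of (i), not its statement, is being reused.
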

A differential operator $D \in \HCdiff^\bullet(E)$
is called \emph{homogeneous of degree $k$}
if it satisfies
\begin{equation}
	[\Lie_\euler,D] = k D.
\end{equation}
Here we use the Gerstenhaber bracket introduced above.
Note that the interpretation of a multivector field
$X \in \VectFields^n(E)$
as a multi-differential operator in
$\HCdiff^{n-1}(E)$
is compatible with the homogeneous degrees.

To a section $s \in \Secinfty(E)$ we can associate a vector field
$s^\ver \in \Secinfty(TE)$, the \emph{vertical lift}, which is given by its flow 
\begin{align}
	s^\ver(e_p)= \frac{\D}{\D t}\at{t=0} (e_p+ts(p)).
\end{align}
Note that this vector field has the properties
\begin{equation}
	\label{eq:VerticalLiftProps}
\begin{aligned}
	(fs)^\ver
	&= \pr^*f\cdot s^\ver,
	&
	\Lie_{s^\ver}(p^*f)
	&= 0,
	\\
	[s^\ver,t^\ver] 
	&= 0,
	&
	\Lie_{s^\ver}(\mathcal{J}(\alpha))
	&= \pr^*\alpha(s),
\end{aligned}
\end{equation}
for all $f\in \Cinfty(M)$,
$s,t \in \Secinfty(E)$
and $\alpha\in \Secinfty(E^*)$.
Therefore, we have that
$[\euler,s^\ver]=-s^\ver$.
Moreover, we can extend the vertical lift 
to multivector fields
$\cdot^\ver \colon \VectFields^\bullet(E)
\to \VectFields^\bullet (TE)$.

A covariant derivative
$\nabla \colon \Secinfty(TE) \tensor \Secinfty(TE) \to \Secinfty(TE)$
is called \emph{homogeneous} if it satisfies
\begin{equation}
	\label{eq:HomoCovariantDerivative}
	\Lie_\euler (\nabla_X Y) 
	= \nabla_{\Lie_\euler X} Y 
		+ \nabla_X(\Lie_\euler Y),
\end{equation}
for all $X,Y \in \Secinfty(TE)$.

\begin{proposition}[Homogeneous HKR Theorem]
	\label{prop:HomoHKRTheorem}
	Let $E \to M$ be a vector bundle.
	\begin{propositionlist}
		\item There exists a torsion-free homogeneous covariant derivative on $E$.
		\item Given a torsion-free homogeneous covariant derivative on $E$
		the maps 
		$\hkrHomo \colon \HCdiff^\bullet(E) \to \HCdiff^{\bullet-1}(E)$
		and 
		$\hkrInv \colon \HCdiff^\bullet(E) \to \VectFields^{\bullet+1}(E)$
		from \autoref{thm:classicalHKR}
		preserve the degrees of homogeneity.
	\end{propositionlist}
\end{proposition}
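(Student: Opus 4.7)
The plan is to treat the two statements separately: for (i) I would construct a torsion-free homogeneous covariant derivative explicitly from auxiliary data, while (ii) should follow by invoking the equivariance clause already present in \autoref{thm:classicalHKR}.

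\textbf{Part (i): existence.} My starting point would be a torsion-free covariant derivative $\nabla^M$ on the base $M$ together with a linear connection $\nabla^E$ on the vector bundle $E \to M$. The latter furnishes a horizontal distribution $H \subseteq TE$ complementary to the vertical distribution $V = \ker \D\pr$. Both distributions are preserved by the flow of $\euler$: vertical lifts $s^\ver$ are homogeneous of degree $-1$ by \eqref{eq:VerticalLiftProps}, and the horizontal lifts of vector fields from $M$ can be checked to be homogeneous of degree $0$, because the flow of $\euler$ is fibrewise dilation by $\E^t$ and a linear connection's horizontal distribution is invariant under such dilations. I would then define a covariant derivative on $TE$ piecewise via this splitting: use the horizontal lift of $\nabla^M$ on horizontal-horizontal arguments, $\nabla^E$ on horizontal derivatives of vertical lifts, and the canonical flat connection along fibers on vertical-vertical arguments. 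Each building block is manifestly $\euler$-invariant, so the sum is homogeneous, and a standard symmetric correction in the mixed terms then produces a torsion-free covariant derivative; homogeneity survives this correction because Lie brackets of homogeneous vector fields are again homogeneous.

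\textbf{Part (ii): preservation of degrees.} The decisive observation is that condition \eqref{eq:HomoCovariantDerivative} is precisely the vanishing of the Lie derivative of $\nabla$ along $\euler$, so $\nabla$ is invariant under the one-dimensional Lie algebra action of $\Reals$ generated by $\euler$. The last clause of \autoref{thm:classicalHKR} therefore yields that $\hkrInv$ and $\hkrHomo$ are equivariant under this action, which amounts to commuting with $\Lie_\euler$ in the Gerstenhaber sense used to define homogeneity on $\HCdiff^\bullet(E)$. Since by definition $D \in \HCdiff^\bullet(E)$ is homogeneous of degree $k$ iff $[\Lie_\euler, D] = k D$, operators commuting with $[\Lie_\euler, \argument]$ preserve its eigenspaces, and the claim follows for both $\hkrInv$ and $\hkrHomo$.

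\textbf{Main obstacle.} The substantive content is (i): the naive idea of averaging an arbitrary torsion-free connection along the Euler flow fails because this flow is non-compact, so one cannot simply integrate. The construction must therefore build in $\euler$-invariance from the start through the horizontal/vertical splitting, and simultaneously guarantee torsion-freeness; reconciling these two requirements is the bulk of the work. Part (ii), by contrast, is essentially a formal consequence of the equivariance statement already embedded in \autoref{thm:classicalHKR}.
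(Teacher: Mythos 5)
Your proof is correct, but for part (i) it takes a genuinely different route from the paper. The paper does not build the connection from auxiliary data: it observes that homogeneity of $\nabla$ means invariance under the flow of $\euler$, notes that this flow acts properly on $E\setminus\iota(M)$ so that an invariant torsion-free connection exists there, and then argues in a local trivialization that the Christoffel symbols are constant or linear along the fibres and hence extend uniquely over the zero section. You instead construct the connection explicitly from a torsion-free connection $\nabla^M$ on $M$ and a linear connection $\nabla^E$ on $E$, using the dilation-invariant horizontal/vertical splitting and then killing the torsion; this is more constructive and sidesteps both the properness argument and the extension over the zero section, at the price of having to verify the consistency of the piecewise definition. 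One small imprecision: with the natural choices ($\nabla_{X^\hor}Y^\hor=(\nabla^M_XY)^\hor$, $\nabla_{X^\hor}s^\ver=(\nabla^E_Xs)^\ver$, $\nabla_{s^\ver}X^\hor=0=\nabla_{s^\ver}t^\ver$) the mixed torsion already vanishes, since $[X^\hor,s^\ver]=(\nabla^E_Xs)^\ver$; the torsion sits in the horizontal--horizontal slot as the vertical-valued curvature term of $\nabla^E$, and it is there that you subtract half the torsion tensor. Since that tensor is built from $\euler$-homogeneous pieces, homogeneity survives, as you say, so the argument closes. For part (ii) you do exactly what the paper does: \eqref{eq:HomoCovariantDerivative} is $\euler$-invariance of $\nabla$, so the equivariance clause of \autoref{thm:classicalHKR} makes $\hkrInv$ and $\hkrHomo$ commute with $[\Lie_\euler,\argument]$ and hence preserve its eigenspaces.
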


\begin{proof}
	The defining Equation \eqref{eq:HomoCovariantDerivative}
	of a homogeneous covariant derivative is equivalent to saying that
	$\nabla$ is invariant under the action of the Lie subalgebra of $\VectFields(E)$
	generated by the Euler vector field $\euler$. Its flow acts properly if we restrict it to 
	$E\setminus\iota(M)$
	and thus we can find an invariant covariant derivative with respect to this action. 
	In a local trivialization, one can see that the Christoffel symbols are only 
	constant or linear along the fibres and thus can be extended to the zero section in 
	a unique way. 
	Then the second part follows from \autoref{thm:classicalHKR}.
\end{proof}

\subsection{Lie Algebroids and their Connections}
\label{sec:LieAlgebroids}

We will need some basic properties of Lie algebroids.
All of the following can be found e.g. in \cite{crainic.fernandes:2011a}

\begin{definition}[Lie Algebroid]
	\label{def:LieAlgebroid}
	A \emph{Lie algebroid} consists of a vector bundle
	$A \to M$ together with
	a vector bundle morphism
	$\rho_A \colon A \to TM$ covering the identity, called \emph{anchor},
	and a Lie algebra structure $[\argument, \argument]_A$
	on $\Secinfty(A)$, such that
	\begin{equation}
		\label{eq:LieAlgebroid}
		[s,ft]_A = \rho_A(s)(f)t + f[s,t]_A
	\end{equation}
	holds for all $f \in \Cinfty(M)$ and $s,t \in \Secinfty(A)$.
\end{definition}
To every Lie algebroid there is an associated cochain complex, its so-called \emph{Lie algebroid complex},
given by $\Secinfty(\Anti^\bullet A^*)$ with differential defined by 
\begin{align}
	\label{eq:deRhamDifferential}
	(\D_A \alpha)(s_1,\dotsc, s_{k+1})
	= &\sum_{i=1}^{k+1} (-1)^{i+1}
		\rho_A(s_i)\bigl(\alpha(s_1, \dotsc, \overset{i}{\wedge}, \dotsc, s_{k+1}) \bigr)
	\\
		&+ \sum_{1\leq i<j \leq {k+1}}(-1)^{i+j+1}
		\alpha\bigl([s_i, s_j]_A, s_1,
		\dotsc,\overset{i}{\wedge},\dotsc,\overset{j}{\wedge},\dotsc,
		s_{k+1}\bigr),
\end{align}
for all $\alpha \in \Secinfty(\Anti^k A^*)$ and $s_1, \dots, s_{k+1} \in \Secinfty(A)$,
where $\overset{i}{\wedge}$ denotes the omission of the $i$-th entry.
The corresponding cohomology will be denoted by $\Cohom^\bullet(A)$.

Any Lie algebroid
$A \to M$
carries an associated Poisson structure $\pi_\KKS$
on its dual $A^*$.
This Poisson structure is uniquely determined by 
\begin{equation}
	\label{eq:KKSPoisson}
	\{\pr^*f, \pr^*g\}_\KKS = 0,
		\quad
	\{\Jmap(s),\Jmap(t)\}_\KKS
	= - \Jmap([s,t]_A)
		\quad \text{ and } \quad
	\{\pr^*f, \Jmap(s)\}_\KKS
	= \pr^*\big( \rho_A(s)(f) \big),
\end{equation}
for all
$s,t \in \Secinfty(A)$ and
$f \in \Cinfty(M)$.
Here $\pr \colon A^* \to M$ denotes the bundle projection
and $\Jmap \colon \Secinfty(A) \to \Cinfty(A^*)$
was introduced in \autoref{sec:HomogeneityOnVB}.
Note that this Poisson structure fulfils
$\Lie_\euler \pi_\KKS = -\pi_\KKS$, 
i.e. it is linear.
On the other hand, any linear Poisson structure on $A^*$ 
induces a Lie algebroid on $A$ by using \eqref{eq:KKSPoisson}
to define the Lie bracket and the anchor.

\begin{definition}
Let $(A \to M,[\argument,\argument]_A,\rho_A)$ be a Lie algebroid.
An \emph{$A$-connection} is a map 
$\nabla \colon \Secinfty(A) \tensor \Secinfty(A) \to \Secinfty(A)$, such that
\begin{equation}
	\nabla_{fs}t = f \nabla_s t 
	\qquad\text{and}\qquad
	\nabla_s (ft) = \rho_A(s)(f)t + f\nabla_s t
\end{equation}	 
hold for all $s,t \in \Secinfty(A)$ and $f \in \Cinfty(M)$ 
We say that $\nabla$ is \emph{torsion-free}, if
$\nabla_s t - \nabla_s t = [s,t]_A$ for all $s,t \in \Secinfty(A)$. 
\end{definition}
To each connection on a Lie algebroid $(A \to M,[\argument,\argument]_A,\rho_A)$, we can associate its curvature 
$R^\nabla \in \Secinfty(\Anti^2 A^* \tensor \End(A))$ by
\begin{align}
	R^\nabla(s,t)u
	\coloneqq \bigl([\nabla_s,\nabla_t]-\nabla_{[s,t]}\bigr)u
\end{align}
for $s,t,u\in \Secinfty(E)$.
By choosing a fibre metric we can always find a torsion-free $A$-connection
with
\begin{align}
	\tr(R^\nabla) = 0
	\in \Secinfty(\Anti^2 A^*),
\end{align}
where $\tr$ denotes the fibrewise trace on the $\End(E)$-part.
With an $A$-connection we can obtain a horizontal lift of a section
$s \in \Secinfty(A)$ to a vector field $s^\hor \in \Secinfty(TA^*)$ by 
\begin{align}
	s^\hor(\mathcal{J}(t))
	=\mathcal{J}(\nabla_st)
	\quad \text{ and } \quad 		
	s^\hor(p^*f) 
	= p^*(\rho(s)(f))
\end{align}
for all $t \in \Secinfty(A)$ and
$f \in \Cinfty(M)$.
One can check (e.g. in local coordinates) that this actually defines a vector field.
Moreover, one gets that 
$[\euler,s^\hor] = 0$
for all $s \in \Secinfty(A)$.

\section{Homogeneous Star Products}
\label{sec:HomStarProducts}

In this paper we want to study star products on the total space of a given vector bundle
$\pr \colon E \to M$ which are compatible with the linear structure of $E$.

\begin{definition}[Homogeneous Star Product]\
	\label{def:HomoStarProduct}
Let $E \to M$ be a vector bundle.
\begin{definitionlist}
	\item A star product
	$\star = \mu_0 + \sum_{r=1}^{\infty} \hbar^r C_r$
	on $E$ is called \emph{homogeneous}, if 
	for all $r \geq 1$ the differential operator $C_r$ is homogeneous of degree $-r$, i.e. we have
	\begin{equation}
		\label{eq:HomogeneityCr}
		[\Lie_{\euler}, C_r] = -r \cdot C_r.
	\end{equation}
	\item We say that an equivalence
	$S = \id + \sum_{r=1}^{\infty} \hbar^r S_r$ 
	of star products is \emph{homogeneous}, if for all $r \geq 1$ the differential operator $S_r$
	is homogeneous o degree $-r$, i.e. we have
	\begin{equation}
		\label{eq:HomogeneitySr}
		[\Lie_\euler, S_r] = -r \cdot S_r.
	\end{equation}
\end{definitionlist}
\end{definition}

\begin{remark}
	A star product $\star$ is homogeneous if and only if 
	$\hbar \frac{\del}{\del \hbar} + \Lie_\euler$ is a derivation of $\star$.
	Thus our definition agrees with the definition used in 
	\cite{neumaier.waldmann:2009a}.
\end{remark}
Note that for a homogeneous equivalence $S$ of star products $\star$ and $\tilde{\star}$
we immediately get
$S(\pr^*f) = \pr^*f$ for all $f \in \Cinfty(M)$.
Moreover, if $\star$ is homogeneous then $\tilde{\star}$ is homogeneous as well.
From \eqref{eq:HomogeneityCr} it follows immediately that
\begin{equation}
	\label{eq:CkOnPolynomials}
	C_r\bigl(\Pol^s(E), \Pol^t(E)\bigr)
		\subseteq
	\Pol^{s + t - r}(E).
\end{equation}

\begin{remark}
	Note that by \eqref{eq:CkOnPolynomials} for every homogeneous star product we get that
	$\Pol(E)[\hbar] \subseteq \Cinfty(E)\formal{\hbar}$ is a subalgebra 
	where we can actually evaluate the formal parameter $\hbar$ at any complex value.
\end{remark}
We know that every star product on a manifold induces a Poisson structure on it, see \autoref{sec:StarProducts}.
It is well-known that for a homogeneous star product on the total space of a vector bundle we get the following refined result.

\begin{lemma}
	\label{lem:StarProdInducesLinearPoisson}
	Let $A \to M$ be a vector bundle.
	Let $\star$ be a homogeneous star product on $A^*$, then the associated Poisson structure $\pi \in \VectFields^2(A^*)$ is linear, or, equivalently, $A$ is a Lie algebroid.  
\end{lemma}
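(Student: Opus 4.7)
The plan is to show that the Poisson bivector $\pi$ induced by $\star$ satisfies $\Lie_\euler \pi = -\pi$, i.e. is homogeneous of degree $-1$ (equivalently, fibrewise-linear on $A^*$), and then to invoke the bijective correspondence between linear Poisson structures on $A^*$ and Lie algebroid structures on $A$ recalled after \eqref{eq:KKSPoisson}.

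For the key step I would argue as follows. The first order cochain $C_1$ is homogeneous of degree $-1$ by \eqref{eq:HomogeneityCr}, i.e.\ $[\Lie_\euler, C_1] = -C_1$. Since the bracket $[\Lie_\euler,\argument]$ commutes with the swap of arguments of a bidifferential operator (it acts identically in both slots via the Leibniz rule), the antisymmetric part $C_1^-$ defined in \eqref{eq:SymAntiSymParts} satisfies $[\Lie_\euler, C_1^-] = -C_1^-$. Because $\I\{f,g\} = C_1^-(f,g)$, the Poisson bivector $\pi$ viewed as a bidifferential operator equals $C_1^-/\I$, and the homogeneity passes through directly, yielding $\Lie_\euler \pi = -\pi$. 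Alternatively, one can note that by \autoref{lem:Skewstuff} the closed and totally antisymmetric cochain $C_1^-$ is the HKR image of a bivector, and the homogeneous HKR Theorem \autoref{prop:HomoHKRTheorem} transports the $(-1)$-homogeneity of $C_1^-$ to $\pi = \hkrInv(C_1^-)/\I$; either viewpoint gives the same conclusion.

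Then $\pi$ is a linear Poisson structure on $A^*$, and by the correspondence recalled in \autoref{sec:LieAlgebroids} it determines a Lie algebroid structure on $A$ through \eqref{eq:KKSPoisson}. There is no real obstacle here: the argument is essentially formal, relying only on the degree $-1$ homogeneity of $C_1$ and the compatibility of antisymmetrization with the Euler bracket, while the deeper content is all packaged into the preliminaries on linear Poisson structures and Lie algebroids.
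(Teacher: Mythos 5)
Your proposal is correct and follows essentially the same route as the paper: the paper's proof likewise notes that $C_1$ is homogeneous of degree $-1$, hence so is $C_1^-$ and therefore the induced bivector $\pi$, which makes it a linear Poisson structure corresponding to a Lie algebroid structure on $A$. Your extra remarks (compatibility of antisymmetrization with $[\Lie_\euler,\argument]$ and the optional HKR viewpoint) are just fillings-in of details the paper leaves implicit.
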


\begin{proof}
	By definition of homogeneous star product we know that
	$C_1$ is homogeneous of degree $-1$,
	and thus the induced Poisson structure
	$\{F,G\}_\KKS = C_1^-(F,G)$
	is homogeneous of degree $-1$.
	Equivalently, the corresponding bivector field
	$\pi$ is homogeneous of degree $-1$ and hence
	is a linear Poisson structure.
\end{proof}
\begin{example}
	\label{ex:HomgeneousStarProds}
Many explicit constructions of star products actually canonically produce 
homogenous star products.
We refer to \autoref{sec:Comparison} for a more 
exhaustive discussion, but let us discuss already now some special cases here. 
\begin{examplelist}
	\item \label{ex:HomgeneousStarProds_Moyal}
	We consider $T^*\mathbb{R}^d$ with coordinates $(q^i,p_i)$.
	Then the \emph{Weyl-Moyal} product 
	\begin{align}
		f\star g \coloneqq \sum_{k=0}^\infty \frac{(\I\hbar)^k}{2^k k!} 
		\frac{\partial^k f}{\partial q^{i_1}\dots\partial q^{i_k}}
		\frac{\partial^k f}{\partial p_{i_1}\dots\partial p_{i_k}}
	\end{align}
	which is homogeneous.
	\item \label{ex:HomgeneousStarProds_Cotangent}
	More generally, for a manifold $M$, one can construct homogeneous
	star products on $T^*M$ via the method of Fedosov.
	This is carried out in great detail in 
	\cite{bordemann.neumaier.waldmann:1998a,bordemann.neumaier.waldmann:1999a}. 
	\item \label{ex:HomgeneousStarProds_Gutt}
	Let $\liealg{g}$ be a finite dimensional real Lie algebra, then in 
	\cite{gutt:1983a} 
	it is shown, that one can quantize the corresponding Poisson manifold 
	$(\liealg{g}^*,\pi_\KKS)$ via the universal property of the universal 
	enveloping algebra  $\mathcal{U}(\liealg{g})$.
	This, in fact, results in a homogenous star product.  
\end{examplelist}
\end{example}

\begin{convention}
	From now on, when we consider a homogeneous star product $\star$ on the dual of a Lie algebroid $A$ we will always assume that the linear Poisson structure on $A^*$ induced by $\star$ agrees with the linear Poisson structure induced by $A$.
	Or equivalently, that the Lie algebroid structure on $A$ is the one induced by $\star$.
	As for any Lie algebroid we will denote this Poisson structure by $\pi_\KKS$ or $\{\argument,\argument\}_\KKS$.
	For a fixed Lie algebroid $A$ we will denote the set of homogeneous equivalence classes of homogeneous
	star products deforming the Poisson structure $\pi_\KKS$ by 
	$\Def_\homogen(A^*)$.
\end{convention}

From the above result we know that for every homogeneous star product $\star$
the anti-symmetric part of the first order is given by
$C_1^- = \I \{\argument, \argument\}_\KKS$.
The next result shows that we can in fact always achieve that 
the whole first order $C_1$ is given by the linear Poisson bracket

\begin{lemma}
	\label{lem:EquivHomoStarProducts}
	Let $A \to M$ be a Lie algebroid.
	Every homogeneous star product $\star$ on $A^*$
	is homogeneously equivalent to a homogeneous star product
	$\tilde{\star}$ of the form
	\begin{equation}
		F \mathbin{\tilde{\star}} G 
		= F \cdot G + \frac{\I\hbar}{2} \{F,G \}_\KKS + \hbar^2(\dots).
	\end{equation}
\end{lemma}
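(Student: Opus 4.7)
The plan is to kill the symmetric part of $C_1$ by a homogeneous equivalence at first order. Writing $S = \id + \hbar T_1 + O(\hbar^2)$ with $T_1 \in \HCdiff^0(A^*)$ and expanding $S(f \star g) = S(f) \mathbin{\tilde{\star}} S(g)$ at order $\hbar$ shows that the first-order term of the new star product is $\tilde{C}_1 = C_1 - \del T_1$. Since the convention fixes $C_1^- = \I\{\argument,\argument\}_\KKS$, it suffices to find a $T_1$, homogeneous of degree $-1$, with
\[
\del T_1 \;=\; C_1 - \tfrac{\I}{2}\{\argument,\argument\}_\KKS .
\]

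To produce such a $T_1$ I would apply the HKR deformation retract of \autoref{thm:classicalHKR} directly to $C_1$. Associativity of $\star$ at first order gives $\del C_1 = 0$, and the convention that $\star$ induces the KKS Poisson structure means that the total antisymmetrization of $C_1$ is $\AntiSymmetrizer(C_1) = \tfrac{1}{2}C_1^- = \tfrac{\I}{2}\{\argument,\argument\}_\KKS$. The homotopy formula then yields
\[
\del(\hkrHomo C_1) \;=\; C_1 - \hkr(\hkrInv C_1) \;=\; C_1 - \AntiSymmetrizer(C_1) \;=\; C_1 - \tfrac{\I}{2}\{\argument,\argument\}_\KKS,
\]
so $T_1 := \hkrHomo C_1$ is the required primitive at the level of the Hochschild complex. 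Note that $T_1(1) = 0$ either because $\hkrHomo$ lands in $\HCdiff$, or, more conceptually, because a nonzero homogeneous function of degree $-1$ cannot exist.

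The single delicate point, which I regard as the main obstacle, is to ensure that this $T_1$ is genuinely homogeneous of degree $-1$; a priori $\hkrHomo$ is built from an auxiliary torsion-free covariant derivative and need not respect the Euler field. This is handled by fixing at the outset a torsion-free \emph{homogeneous} covariant derivative on $A^*$, whose existence is guaranteed by \autoref{prop:HomoHKRTheorem}. With this choice $\hkrHomo$ preserves degrees of homogeneity, so $T_1$ inherits the degree $-1$ from $C_1$. Setting $S := \id + \hbar T_1$ and defining $\tilde{\star}$ by conjugation of $\star$ with $S$ then provides a homogeneous equivalence onto a homogeneous star product whose first-order term equals $\tfrac{\I}{2}\{\argument,\argument\}_\KKS$, as claimed; homogeneity of $\tilde{\star}$ at higher orders is automatic because conjugation by a homogeneous formal series preserves the derivation property of $\hbar\tfrac{\partial}{\partial\hbar} + \Lie_\euler$.
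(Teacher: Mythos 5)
Your proposal is correct and follows essentially the same route as the paper: fix a torsion-free homogeneous covariant derivative, use the homogeneous HKR deformation retract together with \autoref{lem:Skewstuff} to produce a degree $-1$ primitive for the symmetric part of $C_1$, and conjugate by the resulting homogeneous first-order equivalence. The only cosmetic difference is that you apply $\hkrHomo$ to $C_1$ itself (using $\hkr\hkrInv C_1 = \AntiSymmetrizer(C_1) = \tfrac{\I}{2}\{\argument,\argument\}_\KKS$), whereas the paper applies it to $C_1^+$; both give the same conclusion.
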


\begin{proof}
Let $\nabla$ be a homogeneous covariant derivative on $A^*$.
By \autoref{prop:HomoHKRTheorem}
we know that $\hkrHomo(C_1^+)$
is homogeneous of degree $-1$ and thus
$S \coloneqq \id - \frac{\hbar}{2} \hkrHomo(C_1^+)$ is a homogeneous equivalence.
Define
$F \mathbin{\tilde{\star}} G \coloneqq S^{-1}\bigl(S(F) \star S(G)\bigr)$.
Then in first order of $\hbar$ we obtain
\begin{equation*}
	\label{eq:EquivHomoStarProducts}
	\tag{$*$}
	\tilde{C}_1
	= C_1 - \frac{1}{2} \del\hkrHomo(C_1^+)
\end{equation*}
using \eqref{eq:HochschildDiff}.
Since $\star$ is a star product, we know that
$0 = \del C_1
	= \del(\hkr(\pi_\KKS)+C_1^+)
	= \del C_1^+$.
Thus applying the homotopy formula from \autoref{thm:classicalHKR}
and using \autoref{lem:Skewstuff}
we get
$C_1^+
	= \hkr(\hkrInv(C_1^+)) + \del \hkrHomo(C_1^+)
	=\del \hkrHomo(C_1^+)$.
Inserting this into \eqref{eq:EquivHomoStarProducts} we obtain
\begin{equation*}
	\tilde{C}_1
	= C_1 - \frac 12 C_1^+
	= \frac 12 C_1^-
	= \frac{\I}{2} \{\argument,\argument\}_\KKS,
\end{equation*}
and thus $\tilde{\star}$ is a star product of the expected form.
\end{proof}
Let us collect some useful formulas that we will need later on.

\begin{lemma}
	\label{lem:StarExpan}
	Let $A \to M$ be a Lie algebroid and let
	$\star = \mu_0 + \sum_{r=1}^{\infty} \hbar^r C_r$
	be a homogeneous star product on
	$\pr \colon A^* \to M$.
	Then it holds 
	\begin{align}
		\pr^* f \star \pr^*g
		&= \pr^*(fg),
		\\
		[\Jmap(s),\pr^*f]_\star
		&= \I\hbar\{\Jmap(s),\pr^*f\}_\KKS
		= \I \hbar \pr^*\rho_A(s)(f)
		,
		\\
		\shortintertext{and}
		[\Jmap(s),\Jmap(t)]_\star 
		&= \I\hbar \{\Jmap(s),\Jmap(t)\}_\KKS
			+ \hbar^2 C_2^-\big(\Jmap(s),\Jmap(t)\big)
		= - \I\hbar\Jmap([s,t]_A) 
			+ \hbar^2 C_2^-\big(\Jmap(s),\Jmap(t)\big),
	\end{align}
	for all $f,g \in \Cinfty(M)$ and $s,t \in \Secinfty(A)$.
\end{lemma}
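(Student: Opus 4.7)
The plan is to prove all three formulas as direct consequences of the homogeneity condition \eqref{eq:HomogeneityCr}, which via \eqref{eq:CkOnPolynomials} constrains the polynomial degree of $C_r(\Jmap(s_1)\cdots, \Jmap(s_2)\cdots)$, combined with the fact (from the proposition in \autoref{sec:HomogeneityOnVB}) that non-zero homogeneous functions on a vector bundle can only have degrees in $\Naturals_0$. The rest is just unwinding the definition of $C_1^-$ as $\I\{\argument,\argument\}_\KKS$ and applying the defining relations \eqref{eq:KKSPoisson} of the KKS Poisson structure.

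First, for $\pr^*f \star \pr^*g$, I would observe that $\pr^*f,\pr^*g \in \Pol^0(A^*)$, so \eqref{eq:CkOnPolynomials} forces $C_r(\pr^*f,\pr^*g) \in \Pol^{-r}(A^*)$. Since $r\geq 1$ and there are no non-zero homogeneous polynomial functions of negative degree, every $C_r(\pr^*f,\pr^*g)$ vanishes, leaving only $\mu_0(\pr^*f,\pr^*g) = \pr^*(fg)$.

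For the second identity, the same mechanism applied to $\Jmap(s) \in \Pol^1(A^*)$ and $\pr^*f \in \Pol^0(A^*)$ gives $C_r(\Jmap(s),\pr^*f) \in \Pol^{1-r}(A^*)$, which vanishes for $r\geq 2$. In the commutator, the pointwise products cancel, and only the first-order term survives:
\begin{equation*}
    [\Jmap(s),\pr^*f]_\star
    = \hbar\,C_1^-(\Jmap(s),\pr^*f)
    = \I\hbar\,\{\Jmap(s),\pr^*f\}_\KKS
    = \I\hbar\,\pr^*\rho_A(s)(f),
\end{equation*}
where the last step uses \eqref{eq:KKSPoisson}.

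For the third identity, $\Jmap(s),\Jmap(t) \in \Pol^1(A^*)$ gives $C_r(\Jmap(s),\Jmap(t)) \in \Pol^{2-r}(A^*)$, which vanishes only for $r\geq 3$. Thus in the star commutator the $r=0$ (pointwise) term cancels and only the first and second orders survive, producing $\I\hbar\{\Jmap(s),\Jmap(t)\}_\KKS + \hbar^2 C_2^-(\Jmap(s),\Jmap(t))$; substituting $\{\Jmap(s),\Jmap(t)\}_\KKS = -\Jmap([s,t]_A)$ from \eqref{eq:KKSPoisson} yields the desired form. There is no real obstacle here: each identity is a one-line polynomial-degree count, and the only thing to keep track of carefully is that sign conventions (for $C_1^-$ and for the KKS bracket on $\Jmap$-images) combine to produce the explicit formulas as stated.
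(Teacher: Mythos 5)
Your proposal is correct and follows essentially the same route as the paper: a homogeneity/degree count showing $C_r(\pr^*f,\pr^*g)$, $C_r(\Jmap(s),\pr^*f)$ and $C_r(\Jmap(s),\Jmap(t))$ vanish for $r\geq 1$, $r\geq 2$ and $r\geq 3$ respectively, followed by the defining relations \eqref{eq:KKSPoisson} of $\pi_\KKS$ and the identification $C_1^-=\I\{\argument,\argument\}_\KKS$. No gaps; your statement of the homogeneous degrees ($-r$, $1-r$, $2-r$) is in fact cleaner than the paper's own wording.
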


\begin{proof}
	The first part follows since we know that $C_r(\pr^*f,\pr^*g) \in \Cinfty(A^*)$ is homogeneous of degree $-r$ and therefore vanishes.
	For the second and third part, note that
	\begin{equation*}
		[F,G]_\star
		= \I \hbar \{F,G\}_\KKS
			+ \sum_{r=2}^{\infty} \hbar^r C_r^-(F,G)
	\end{equation*}
	for all $F,G \in \Cinfty(A^*)$.
	Now choosing $F = \Jmap(s)$ and $G = \pr^*f$ for some
	$s \in \Secinfty(A)$ and $f \in \Cinfty(M)$
	we know that $C_r(\Jmap(s),\pr^*f)$
	is homogeneous of degree $(r-1)$ and hence vanishes for all $r \geq 2$.
	The first part then follows from \eqref{eq:KKSPoisson}.
	Similarly, for the last part we choose $F = \Jmap(s)$
	and $G = \Jmap(t)$ for $s,t \in \Secinfty(A^*)$.
	Then $C_r(\Jmap(s),\Jmap(t))$ is homogenous of degree $r-2$
	and hence vanishes for $r \geq 3$.
	Again using \eqref{eq:KKSPoisson} we obtain the third part.
\end{proof}

\subsection{Existence}
\label{sec:HomoStarProdExistence}

Recall from \autoref{sec:StarProducts}
that the problem of extending star products is controlled by the Hochschild cohomology.
In the case of homogeneous star products we can exploit the homogeneous HKR Theorem, see \autoref{prop:HomoHKRTheorem},
to obtain simplified statements.

\begin{proposition}
	\label{prop:ExtHomStar}
Let $E \to M$ be a vector bundle
and let $\star = \mu_0 + \sum_{r=1}^{\infty}\hbar^r C_r$
be a homogeneous star product up to order $3$ on $E$.
Then $\tilde\star = \mu_0 + \sum_{r=1}^{\infty} \hbar^r \tilde{C}_r$ given by
\begin{align}
	\tilde{C}_r
	\coloneqq
	\begin{cases}
		C_r	&\text{for } r = 1,2,3, \\
		-\frac{1}{2}\hkrHomo \bigl(\sum_{\ell=1}^{r-1}[\tilde{C}_\ell,\tilde{C}_{r-\ell}]\bigr)	&\text{for } r \geq 4,
	\end{cases}
\end{align} 
is a homogeneous star product on $E$.
\end{proposition}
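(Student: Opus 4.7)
The plan is to argue by induction on $r \geq 3$, showing at each stage that $\mu_0 + \sum_{s=1}^{r}\hbar^s\tilde{C}_s$ is a homogeneous star product up to order $r$ with every $\tilde{C}_s$ homogeneous of degree $-s$. The base case $r = 3$ holds by hypothesis together with the definition of a homogeneous star product.

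For the inductive step $r \to r+1$ with $r \geq 3$, I would first invoke \autoref{lem:DefTheorystuff} to conclude that the obstruction
\begin{equation*}
	R_{r+1} \coloneqq -\frac{1}{2}\sum_{\ell=1}^{r}[\tilde{C}_\ell, \tilde{C}_{r+1-\ell}] \in \HCdiff^2(E)
\end{equation*}
satisfies $\del R_{r+1} = 0$. Since the Gerstenhaber bracket is additive on homogeneous degrees and each $\tilde{C}_\ell$ is homogeneous of degree $-\ell$ by the inductive hypothesis, $R_{r+1}$ is itself homogeneous of degree $-(r+1)$.

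The decisive step is to show that $\AntiSymmetrizer(R_{r+1}) = 0$. Since $R_{r+1}$ is closed, \autoref{lem:Skewstuff} gives $\AntiSymmetrizer(R_{r+1}) = \hkr \hkrInv R_{r+1}$, and by \autoref{prop:HomoHKRTheorem} the map $\hkrInv$ preserves homogeneity, so $\hkrInv R_{r+1} \in \VectFields^3(E)$ is a trivector field of degree $-(r+1) \leq -4$. The degree bound for multivector fields on the total space of a vector bundle (recalled just before \autoref{prop:HomoHKRTheorem}) forces every trivector of degree less than $-3$ to vanish; hence $\hkrInv R_{r+1} = 0$ and therefore $\AntiSymmetrizer(R_{r+1}) = 0$.

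With the anti-symmetric part of the obstruction killed, the second part of \autoref{lem:Skewstuff} yields $R_{r+1} = \del \hkrHomo R_{r+1}$. Defining $\tilde{C}_{r+1} \coloneqq \hkrHomo R_{r+1}$, which is precisely the recursion displayed in the statement once the factor $-\frac{1}{2}$ of $R_{r+1}$ is made explicit, solves the Maurer-Cartan equation at order $r+1$. By \autoref{prop:HomoHKRTheorem} the homotopy $\hkrHomo$ also preserves homogeneity, so $\tilde{C}_{r+1}$ is homogeneous of degree $-(r+1)$, completing the induction and producing a homogeneous star product on $E$. The only substantive point is the degree count forcing $\hkrInv R_{r+1}$ to vanish; the rest is bookkeeping. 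This also clarifies the r\^ole of the hypothesis: the potential trivector obstructions at orders $2$ and $3$ live in homogeneous degrees $-2$ and $-3$, which the degree bound does not rule out, and must therefore be absorbed into the given data.
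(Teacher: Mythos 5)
Your proof is correct and follows essentially the same route as the paper: use \autoref{lem:DefTheorystuff} for closedness of the obstruction, the homogeneous HKR deformation retract (\autoref{prop:HomoHKRTheorem}, for a chosen homogeneous torsion-free connection) to identify its anti-symmetrization with a trivector field of homogeneous degree $-(r+1)<-3$, which must vanish, and then the homotopy formula to produce $\tilde{C}_{r+1}=\hkrHomo R_{r+1}$. The only cosmetic difference is that you route the argument through \autoref{lem:Skewstuff} while the paper applies the homotopy identity of \autoref{thm:classicalHKR} directly; the content is identical.
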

\begin{proof}
By assumption $\mu_0 + \sum_{r=1}^{3}\hbar^r C_r$ is a homogeneous star product up to order $3$.
We now construct $\tilde{\star}$ by induction.
Thus let us assume we have shown that
$\mu_0 + \sum_{r=1}^{k-1} \hbar^r \tilde{C}_r$ is a homogeneous a star product up to order $k-1 \geq 3$.
Then we have by \autoref{lem:DefTheorystuff} that
$R_{k} \coloneqq -\frac 12 \sum_{\ell=1}^{k-1}[\tilde{C}_\ell,\tilde{C}_{k-\ell}]$
is closed.
Thus after choosing a homogeneous connection $\nabla$ on $E$, we can 
use the deformation retract from \autoref{thm:classicalHKR} to obtain 
\begin{equation*}
	R_{k}
	= \hkr \hkrInv(R_{k}) + \del \hkrHomo (R_{k}).
\end{equation*}
Note that $\hkrInv(R_{k})$ is a tri-vector field with homogeneous degree 
$-k < -3$ and therefore has to vanish.
Thus by \autoref{lem:DefTheorystuff} we know that by setting
$\tilde{C}_{k}:= \hkrHomo (R_{k})$,
we get a star product up to order $k$.
Note that $\tilde{C}_{k}$ is homogeneous of degree $-k$ by \autoref{prop:HomoHKRTheorem}.
This way we obtain a homogeneous star product $\tilde{\star}$.
\end{proof}
We will now give a concrete construction of a homogeneous star product
on the dual of an arbitrary Lie algebroid.
Note that by the above result we only need to construct the second and third order of a homogeneous star product in a compatible way.
Our construction depends on a chosen closed Lie algebroid two-form $B$.
The introduction of this two-form might seem unmotivated at the moment, but its significance will become clear in \autoref{sec:ClassificationProjStarProd}.

\begin{proposition}[Existence of homogeneous star products]
	\label{prop:ExistenceHomStarProd}
	Let $A \to M$ be a Lie algebroid, let $\nabla$ be a homogeneous covariant derivative on $A^*$
	and let $B \in \Secinfty(\Anti^2 A^*)$ be closed.
	Then $\star_{\scriptscriptstyle(\nabla,B)} = \mu_0 + \sum_{r=1}^{\infty} \hbar^r C_r$ defined by
	\begin{align}
		C_1 
		&\coloneqq
		\frac{\I}{2} \{\argument,\argument\}_\KKS,
		\\
		C_2 
		&\coloneqq -\frac{1}{4} \left(\hkrHomo[C_1,C_1]\right)^+
		+ \hkr B^\ver,
		\\
		\shortintertext{and}
		C_r
		&\coloneqq
		-\frac{1}{2}\hkrHomo (\sum_{\ell=1}^{r-1}[C_\ell,C_{r-\ell}]),
	\end{align}
	for all $r \geq 3$,
	is a homogeneous star product on $A^*$.
\end{proposition}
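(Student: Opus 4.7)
The plan is to verify that $\mu_0 + \hbar C_1 + \hbar^2 C_2 + \hbar^3 C_3$ already satisfies associativity up to order $3$, after which \autoref{prop:ExtHomStar} extends it inductively to all orders in the prescribed form. Homogeneity of each $C_r$ follows from degree-counting: $C_1$ is half the linear Poisson bracket (degree $-1$); the bracket $[C_1,C_1]$ is of degree $-2$, $\hkrHomo$ preserves degrees by \autoref{prop:HomoHKRTheorem}, and symmetrization preserves them as well; vertical lifts satisfy $[\euler, s^\ver] = -s^\ver$, so $B^\ver$ is a bivector of degree $-2$ and $\hkr B^\ver$ is of degree $-2$; the inductive formula then propagates $\deg C_r = -r$ to all $r \geq 3$.

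The central check at order $2$ is $\del C_2 = -\tfrac{1}{2}[C_1,C_1]$. Set $R \coloneqq [C_1,C_1]$ and $T \coloneqq \hkrHomo R$. Since $C_1 = \hkr(\tfrac{\I}{2}\pi_\KKS)$, $R$ is $\del$-closed and $\AntiSymmetrizer(R) = 0$ (Jacobi for $\pi_\KKS$), so \autoref{lem:Skewstuff}(2) gives $\del T = R$. A short calculation from \eqref{eq:HochschildDiff} shows that for any bidifferential $T$ one has $(\del T^{\mathrm{op}})(f_0,f_1,f_2) = -(\del T)(f_2,f_1,f_0)$, where $T^{\mathrm{op}}(f,g) \coloneqq T(g,f)$. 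Since the antisymmetry of $C_1$ forces $R(f_0,f_1,f_2) = -R(f_2,f_1,f_0)$, we get $\del T^{\mathrm{op}} = R$ and hence $\del T^+ = 2R$. Combined with $\del \hkr B^\ver = 0$ (image of $\hkr$), this yields $\del C_2 = -\tfrac{1}{4}\del T^+ = -\tfrac{1}{2}R$.

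For order $3$, the graded symmetry of the Gerstenhaber bracket on two degree-$1$ Hochschild cochains gives $[C_1,C_2] = [C_2,C_1]$, so the obstruction is $-[C_1,C_2]$ and we must show $\del C_3 = -[C_1,C_2]$ with $C_3 = -\hkrHomo[C_1,C_2]$. Closedness of $[C_1,C_2]$ follows from the graded Leibniz rule together with $[C_1,[C_1,C_1]] = 0$ (graded Jacobi for three copies of $C_1$). By \autoref{lem:Skewstuff}(2), the remaining step $\AntiSymmetrizer[C_1,C_2] = 0$ is the main obstacle and precisely the reason for assuming $B$ closed. Decomposing $C_2 = C_2^{\mathrm{sym}} + C_2^{\mathrm{anti}}$ with $C_2^{\mathrm{sym}} \coloneqq -\tfrac{1}{4}(\hkrHomo R)^+$ and $C_2^{\mathrm{anti}} \coloneqq \hkr B^\ver$, one checks by an elementary direct manipulation that for any antisymmetric $A$ and symmetric $S$ in $\HCdiff^1$ both $A \circ S$ and $S \circ A$ are symmetric under the swap of the first and third arguments; hence $[C_1, C_2^{\mathrm{sym}}]$ has that same symmetry, and its total antisymmetrization vanishes. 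The remaining piece $\AntiSymmetrizer[C_1, C_2^{\mathrm{anti}}]$ is, up to a nonzero scalar, the image under $\hkr$ of the Schouten--Nijenhuis bracket $[\tfrac{\I}{2}\pi_\KKS, B^\ver]_{\mathrm{SN}}$ by the standard HKR compatibility of brackets on elements in the image of $\hkr$. Under the identification of vertical multivectors on $A^*$ with sections of $\Anti^\bullet A^*$, this Schouten bracket corresponds to the Lie algebroid differential $\D_A B$, which vanishes by hypothesis. Thus $\AntiSymmetrizer[C_1,C_2] = 0$, $C_3$ provides the required coboundary, and \autoref{prop:ExtHomStar} extends the construction to a homogeneous star product on $A^*$.
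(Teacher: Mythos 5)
Your proposal is correct and follows essentially the same route as the paper: reduce to associativity up to order $3$ via \autoref{prop:ExtHomStar}, handle order $2$ with the transpose/$\tau$-trick applied to $\hkrHomo[C_1,C_1]$, and at order $3$ split $C_2$ into its symmetric part (whose bracket with the antisymmetric $C_1$ has vanishing total antisymmetrization) and $\hkr B^\ver$ (whose contribution vanishes by closedness of $B$). The only cosmetic difference is that for the $\hkr B^\ver$ term the paper verifies $\AntiSymmetrizer([\hkr B^\ver,C_1]) = -\I\pr^*\D_A B$ by direct evaluation on linear functions $\Jmap(s)$, whereas you invoke the standard compatibility of $\hkr$ with the Gerstenhaber and Schouten brackets under antisymmetrization together with the identification of $[\pi_\KKS,B^\ver]_{\mathrm{SN}}$ with $(\D_A B)^\ver$, which amounts to the same computation.
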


\begin{proof}
	By \autoref{prop:ExtHomStar} it is enough to show that
	$\mu_0 + \sum_{r=1}^{3}\hbar^r C_r$
	is a homogeneous star product up to order $3$.
	For this, first note that all $C_r$ are homogeneous bidifferential operators 
	of degree $-r$.
	It remains to show that $\star_{\scriptscriptstyle(\nabla,B)}$
	is associative up to order $3$.
	In other words, that \eqref{eq:MCOrd} is satisfied for $r = 1,2,3$.
	Since $\{\argument,\argument\}_\KKS$ is a biderivation we have
	$\del C_1 = 0$.

	For the second degree we compute
	\begin{equation*}
		\del C_2 
		= - \frac 14 \del \left(\hkrHomo[C_1,C_1]\right)^+
		+ \del \hkr B^\ver
		= - \frac 14 \del \left(\hkrHomo[C_1,C_1]\right)^+.
	\end{equation*}
	Define
	$(\tau D)(F,G) \coloneqq D(G,F)$
	for any $D \in \HCdiff^1(A^*)$,
	then $\del(\tau D)(F,G,H) = - (\del D)(H,G,F)$
	follows from the definition of $\del$, 
	see \eqref{eq:HochschildDiff}. 
	Together with $[C_1, C_1] = \del \hkrHomo[C_1,C_1]$,
	see \autoref{lem:Skewstuff}, we obtain
	\begin{align*}
		\del \bigl(\tau(\hkrHomo[C_1,C_1])\bigr)(F,G,H)
		= - \del \hkrHomo[C_1,C_1](H,G,F)
		= - [C_1,C_1] (H,G,F)
		= [C_1,C_1](F,G,H),
	\end{align*}
	where the last equality holds since $[C_1,C_1]$ is anti-symmetric in the first and last slot.
	Thus we get
	\begin{align*}
		- \frac 14 \del \left(\hkrHomo[C_1,C_1]\right)^+
		= - \frac 14 \del \left(\hkrHomo[C_1,C_1] + \tau (\hkrHomo[C_1,C_1]) \right)
		= - \frac 12 [C_1,C_1],
	\end{align*}
	and therefore $\star_{\scriptscriptstyle(\nabla,B)}$
	is associative up to order $2$.

	Finally, to consider associativity in third order 
	consider $C_3 = - \hkrHomo[C_2,C_1]$.
	Note that we have
	$\AntiSymmetrizer \bigl( [(\hkrHomo[C_1, C_1])^+,C_1 ]\bigr)= 0$
	since $(\hkrHomo[C_1,C_1])^+$ is symmetric and
	$C_1$ is anti-symmetric.
	Moreover, for $s,t,u \in \Secinfty(A)$ we get
	\begin{align*}
		[\hkr B^\ver, C_1]
		= - \frac{\I}{2} \pr^*
			\Bigl(
				B([s,t]_A,u)
				+ B([t,u]_A,s)
				+ \rho_A(u)B(s,t)
				+ \rho_A(s)B(t,u)\Bigr).
	\end{align*}
	Then $\AntiSymmetrizer\bigl([\hkr B^\ver,C_1]\bigr)
	= -\I\pr^* \D_A B = 0$.
	Together this shows that $\AntiSymmetrizer([C_2,C_1]) = 0$ and therefore $\del\hkrHomo[C_2,C_1] = [C_2,C_1]$
	by \autoref{lem:Skewstuff}.
	Finally, this shows
	$\del C_3 = -[C_2,C_1] = - \frac 12 \sum_{\ell=1}^{2}[C_\ell,C_{3-\ell}]$.
	And hence we obtain a full homogeneous star product by
	\autoref{prop:ExtHomStar}.
\end{proof}

\subsection{Classification}
\label{sec:ClassificationHomStarProd}

The goal of this chapter is to provide a classification of homogeneous star products
on the dual of a fixed Lie algebroid.
More specifically, we will construct a bijection
$\Phi \colon \Def_\homogen(A^*) \to \Cohom^2(A)$
between equivalence classes of homogeneous star products on $A^*$
and the second Lie algebroid cohomology of $A$.

\begin{proposition}
	\label{prop:EquivHomStar}
	Let $E \to M$ be a vector bundle.
	Moreover, let $\star$ and $\tilde{\star}$ be homogenous star products on $E$.
	If $\star$ and $\tilde{\star}$ are homogenously equivalent up to order $2$, then
	they are homogenously equivalent.
\end{proposition}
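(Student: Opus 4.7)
The plan is to upgrade the given order-$2$ homogeneous equivalence into a full one by induction on the order. Suppose inductively that
$S^{(N)} = \id + \sum_{r=1}^{N} \hbar^{r} S_r$,
with each $S_r$ homogeneous of degree $-r$, is a homogeneous equivalence of $\star$ and $\tilde{\star}$ up to order $N$ for some $N \geq 2$. To isolate the order-$(N+1)$ obstruction, transport $\tilde{\star}$ back along $S^{(N)}$:
\begin{equation*}
    F \star' G
    \coloneqq (S^{(N)})^{-1}\bigl( S^{(N)}(F) \mathbin{\tilde{\star}} S^{(N)}(G) \bigr),
\end{equation*}
viewing $S^{(N)}$ as a formal power series with vanishing higher-order terms, which is invertible in $\Diffop(E)\formal{\hbar}$. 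Then $\star'$ is automatically associative and agrees with $\star$ up to order $N$ by the inductive hypothesis. The crucial observation is that $\hbar \partial_\hbar + \Lie_\euler$ commutes with $S^{(N)}$, because each $\hbar^r S_r$ is annihilated by this operator. Since $\hbar \partial_\hbar + \Lie_\euler$ is a derivation of $\tilde{\star}$, it is then a derivation of $\star'$ as well, so $\star'$ is a homogeneous star product on $E$.

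Write $\star' = \mu_0 + \sum_r \hbar^r C_r'$. The difference $D_{N+1} \coloneqq C_{N+1}' - C_{N+1}$ is a bidifferential operator that is homogeneous of degree $-(N+1)$, and subtracting the Maurer--Cartan equations \eqref{eq:MCOrd} for $\star'$ and $\star$ at order $N+1$ (using the agreement up to order $N$) gives $\del D_{N+1} = 0$. Fixing a torsion-free homogeneous covariant derivative on $E$, \autoref{prop:HomoHKRTheorem} provides the homotopy
\begin{equation*}
    D_{N+1} = \hkr\bigl(\hkrInv(D_{N+1})\bigr) + \del\, \hkrHomo(D_{N+1}),
\end{equation*}
in which $\hkrInv(D_{N+1}) \in \VectFields^2(E)$ is homogeneous of degree $-(N+1) \leq -3$. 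The lower bound $k \geq -n$ for non-zero homogeneous $n$-vector fields on $E$, applied with $n=2$, forces $\hkrInv(D_{N+1}) = 0$, so $D_{N+1}$ is $\del$-exact.

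Setting $S_{N+1} \coloneqq -\hkrHomo(D_{N+1})$, which is homogeneous of degree $-(N+1)$ by the homogeneity-preservation part of \autoref{prop:HomoHKRTheorem}, a direct check at order $\hbar^{N+1}$ shows that $S^{(N+1)} \coloneqq S^{(N)} + \hbar^{N+1} S_{N+1}$ is a homogeneous equivalence between $\star$ and $\tilde{\star}$ up to order $N+1$. Iterating yields the desired homogeneous equivalence $S = \id + \sum_{r \geq 1} \hbar^r S_r$.

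The only real obstacle is controlling the HKR obstruction $\hkrInv(D_{N+1})$, and the hypothesis that the partial equivalence reaches order $2$ is exactly what we need: it forces $N+1 \geq 3$, placing the degree $-(N+1)$ outside the range where non-trivial homogeneous bivector fields exist. In contrast, a bivector of degree $-2$ can very well be non-zero (the Kirillov--Kostant--Souriau bivector $\pi_\KKS$ lives precisely there), so if one only assumed equivalence to order $1$ the induction would already fail at its first step.
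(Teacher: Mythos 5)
Your proof is correct and follows essentially the same route as the paper: reduce to star products agreeing up to order $N\geq 2$ (the paper does this via a "without loss of generality", you do it explicitly by transporting $\tilde{\star}$ along $S^{(N)}$), observe that the order-$(N+1)$ difference is $\del$-closed and homogeneous of degree $-(N+1)$, use the homogeneous HKR retract to see the bivector obstruction vanishes by the degree bound, and take the homotopy as the next correction term. The only difference is presentational — you spell out the transport step and the preservation of homogeneity, which the paper leaves implicit.
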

	
\begin{proof}
	Let $\star = \mu_0 + \sum_{r=1}^{\infty} \hbar^r C_r$
	and $\tilde{\star} = \mu_0 + \sum_{r=1}^{\infty} \hbar^r \tilde{C}_r$
	be two such star products.
	Without loss of generality we can assume that $\star$ and $\tilde{\star}$
	coincide up to some order $k \geq 2$.
	Then we have 
	\begin{align*}
		\del C_{k+1}
		= -\frac{1}{2} \sum_{i=1}^{k} [C_i,C_{k+1-i}]
		= -\frac{1}{2} \sum_{i=1}^{k} [\tilde{C}_i,\tilde{C}_{k+1-i}]
		= \del \tilde{C}_{k+1}
	\end{align*}	   
	and thus $\del(C_{k+1}-\tilde{C}_{k+1}) = 0$.
	Choosing a homogeneous covariant derivative on $E$
	and using \autoref{thm:classicalHKR} we get
	$C_{k+1} - \tilde{C}_{k+1}
		= \hkr\bigl(\hkrInv(C_{k+1} - \tilde{C}_{k+1})\bigr)
		+ \del \hkrHomo(C_{k+1} - \tilde{C}_{k+1})$. 
	By \autoref{prop:HomoHKRTheorem} we know that all involved maps preserve the degrees of homogeneity,
	and therefore $\hkrInv(C_{k+1}-\tilde{C}_{k+1})$
	is a bivector field of homogeneous degree $-(k+1)$.
	Since $k \geq 2$ we thus know $\hkrInv(C_{k+1}-\tilde{C}_{k+1}) = 0$.
	Then
	$S = \id + \hbar^{k+1} \hkrHomo(C_{k+1} - \tilde{C}_{k+1})$
	is a homogeneous equivalence between $\star$ and $\tilde{\star}$ up to order $k+1$.  
\end{proof}
Recall that every homogeneous star product is homogeneously equivalent to a star product
with its associated Poisson bracket in first order, see \autoref{lem:EquivHomoStarProducts}.
Hence in order to compare two homogeneous star products
it is enough to consider homogeneous star products
which agree up to order $1$.

\begin{lemma}
	\label{lem:RelativeForm}
	Let $A \to M$ be a Lie algebroid.
	Let
	$\star = \mu_0 + \sum_{r=1}^{\infty}\hbar^r C_r$ and
	$\tilde\star = \mu_0 + \sum_{r=1}^{\infty}\hbar^r \tilde C_r$
	be a homogeneous star products on $A^*$ with $C_1 = \tilde C_1$.
	\begin{lemmalist}
		\item \label{lem:RelativeForm_1}
		Then 
		\begin{equation}
			\label{eq:RelativeForm}
			\tilde\Phi(\star, \tilde\star)
				\colon
			\Secinfty(A) \tensor \Secinfty(A)
				\ni
			s \tensor t \mapsto \iota^*(C_2 - \tilde C_2)^{-}\bigl(\Jmap(s),\Jmap(t)\bigr)
				\in
			\Cinfty(M)
		\end{equation}	 
		defines a  $\D_A$-closed $2$-form in $\Secinfty(\Anti^2 A^*)$. 
		\item \label{lem:RelativeForm_2}
		If $\star$ and $\tilde\star$ are homogeneously equivalent via a homogeneous equivalence
		transformation $S = \id + \sum_{r=1}^{\infty} \hbar^r S_r$,
		then there exists 
		$\alpha \in \Secinfty(A^*)$
		with $\Lie_{\alpha^\ver} = S_1$
		such that
		$\tilde{\Phi}(\star,\tilde\star) = \D_A \alpha$.
		\item \label{lem:RelativeForm_3}
		Let
		$\dbltilde\star = \mu_0 + \sum_{r=1}^{\infty}\hbar^r \dbltilde{C}_r$ be another homogeneous star product with $\dbltilde{C}_1 = C_1$.
		Then it holds
		\begin{equation}
			\tilde{\Phi}(\star,\tilde\star) + \tilde{\Phi}(\tilde\star,\dbltilde\star)
			= \tilde{\Phi}(\star,\dbltilde\star).
		\end{equation}
	\end{lemmalist}
\end{lemma}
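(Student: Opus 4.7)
For part \ref{lem:RelativeForm_1}, the plan is to identify the antisymmetric part of $D := C_2 - \tilde{C}_2$ with the vertical lift of a $2$-form on $A$ and then extract closedness from the Maurer-Cartan equation one order higher. The Maurer-Cartan equation \eqref{eq:MCOrd} at order~$2$ applied to $\star$ and $\tilde{\star}$ together with $C_1 = \tilde{C}_1$ gives $\del D = 0$. By \autoref{lem:Skewstuff} and \autoref{prop:HomoHKRTheorem} the bivector field $\pi := \hkrInv(D) \in \VectFields^2(A^*)$ is homogeneous of degree~$-2$, and in local fibre coordinates one sees that such bivector fields are exactly vertical lifts $B^\ver$ of sections $B \in \Secinfty(\Anti^2 A^*)$. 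Using the identities \eqref{eq:VerticalLiftProps}, a direct evaluation yields $\iota^*(\hkr B^\ver)^-(\Jmap(s),\Jmap(t)) = B(s,t)$ up to a fixed universal scalar, so $\tilde{\Phi}(\star,\tilde{\star})$ is $\Cinfty(M)$-bilinear and antisymmetric in $s,t$, hence a genuine section of $\Anti^2 A^*$. For $\D_A$-closedness, the Maurer-Cartan equation at order~$3$ combined with $C_1 = \tilde{C}_1$ yields $\del(C_3 - \tilde{C}_3) = -[C_1, D]$, so $[C_1, D]$ is a $\del$-coboundary; its total antisymmetrization therefore vanishes by \autoref{lem:Skewstuff}. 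Unpacking $[C_1, D]$ acting on $\Jmap(s), \Jmap(t), \Jmap(u)$ via \autoref{lem:StarExpan} and \eqref{eq:KKSPoisson} reproduces (up to the same scalar) the expression $\D_A B(s,t,u)$, which is thereby forced to vanish.

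For part \ref{lem:RelativeForm_2}, expanding $S(f \star g) = S(f) \mathbin{\tilde{\star}} S(g)$ at order~$\hbar$ and using $C_1 = \tilde{C}_1$ immediately yields $\del S_1 = 0$. Since $S_1 \in \HCdiff^0(A^*)$ is homogeneous of degree~$-1$, the deformation retract of \autoref{thm:classicalHKR} together with \autoref{prop:HomoHKRTheorem} gives $S_1 = \hkr(\hkrInv S_1) + \del \hkrHomo(S_1)$; but $\hkrHomo(S_1) \in \Cinfty(A^*)$ would have to be homogeneous of degree~$-1$, of which there are none, so $\hkrHomo(S_1) = 0$ and $S_1 = \Lie_X$ for the vector field $X := \hkrInv(S_1)$. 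As $X$ has homogeneous degree~$-1$, it must be the vertical lift of a unique $\alpha \in \Secinfty(A^*)$, and so $S_1 = \Lie_{\alpha^\ver}$. Expanding the equivalence relation at order~$\hbar^2$ and rearranging gives $C_2 - \tilde{C}_2 = \del S_2 - [S_1, C_1] + T$, where $T(f,g) := S_1(f) \cdot S_1(g)$ is symmetric in $f,g$; since $(\del S_2)^- = 0$ as well, we obtain $(C_2 - \tilde{C}_2)^- = -[S_1, C_1]^-$. Substituting $S_1 = \Lie_{\alpha^\ver}$ and $C_1^- = \I\{\argument,\argument\}_\KKS$, the right-hand side equals a fixed multiple of the bidifferential operator corresponding to $\Lie_{\alpha^\ver} \pi_\KKS$; its evaluation on $\Jmap(s), \Jmap(t)$ simplifies via \eqref{eq:KKSPoisson} and \eqref{eq:VerticalLiftProps} to $\pr^* \D_A\alpha(s,t)$, giving $\tilde{\Phi}(\star,\tilde{\star}) = \D_A \alpha$ in agreement with the normalization fixed in part \ref{lem:RelativeForm_1}.

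Part \ref{lem:RelativeForm_3} is then immediate from linearity: the decomposition $C_2 - \dbltilde{C}_2 = (C_2 - \tilde{C}_2) + (\tilde{C}_2 - \dbltilde{C}_2)$ together with the linearity of antisymmetrization, of $\iota^*$, and of evaluation on $\Jmap(s), \Jmap(t)$ gives additivity directly. The main obstacle is the $\D_A$-closedness step in part \ref{lem:RelativeForm_1}: although conceptually it is just the Hochschild-to-Schouten shadow of the HKR theorem, it requires careful bookkeeping of the Gerstenhaber bracket $[C_1, D]$ on three copies of $\Jmap$ to match the anchor and bracket terms of $\D_A B$ exactly. This identification is the arithmetic heart of the claim that the second Lie algebroid cohomology controls equivalence classes of homogeneous star products.
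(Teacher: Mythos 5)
Your proposal is correct and, except for how you package the closedness step, it follows the paper's own route: identifying $\hkrInv(C_2-\tilde C_2)$ as a homogeneous degree $-2$ bivector field, hence a vertical lift $B^\ver$ of a $2$-form, then part \ref{lem:RelativeForm_2} via $\del S_1=0$, homogeneity of $S_1$, and the order-$\hbar^2$ expansion of the equivalence, and part \ref{lem:RelativeForm_3} by linearity are exactly the paper's arguments. The one genuine divergence is the proof of $\D_A$-closedness: the paper takes the difference of the Jacobi identities of the star-commutators $[\Jmap(s),[\Jmap(t),\Jmap(u)]_\star]_\star$ at order $\hbar^3$, using \autoref{lem:StarExpan}, while you invoke the order-$3$ Maurer--Cartan equation to see that $[C_1,C_2-\tilde C_2]$ is a $\del$-coboundary and then kill it with $\AntiSymmetrizer$. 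These are two formulations of the same associativity constraint, but your version needs one bookkeeping step you only allude to: when you \enquote{unpack} $[C_1,C_2-\tilde C_2]$ on $\Jmap(s),\Jmap(t),\Jmap(u)$, the symmetric parts of $C_1$ and of $C_2-\tilde C_2$ do contribute to the individual terms, and you must argue that they drop out after total antisymmetrization before the surviving anchor and bracket terms assemble into $\D_A B$. This can be done either by noting that $\AntiSymmetrizer$ of a Gerstenhaber bracket of $2$-cochains vanishes whenever one argument is symmetric (a fact the paper itself uses in the proof of \autoref{prop:ExistenceHomStarProd}), or by replacing the symmetric parts by $\del$-exact terms via \autoref{lem:Skewstuff} and using $\AntiSymmetrizer\circ\del=0$. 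The paper's commutator formulation discards the symmetric parts automatically, which is what makes its computation shorter; with that cancellation made explicit, your argument is complete, modulo the harmless normalization constants you already flag.
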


\begin{proof}
	By the homogeneous HKR Theorem, cf. \autoref{prop:HomoHKRTheorem}, we know that
	$\hkrInv(C_2 - \tilde{C}_2) \in \Secinfty(\Anti^2 TA^*)$
	is homogeneous of degree $2$.
	Thus there exists
	$\tilde{\Phi}(\star,\tilde\star) \in \Secinfty(\Anti^2 A^*)$
	such that
	\begin{equation*}
		\label{eq:relativeForm_proof}
		\tag{$*$}
		\tilde{\Phi}(\star,\tilde\star)^\ver = \hkrInv(C_2 - \tilde{C}_2).
	\end{equation*}
	Since $\star$ and $\tilde\star$ agree up to order $1$ we have
	$\del(C_2 - \tilde{C}_2) = 0$,
	and thus by \autoref{lem:Skewstuff}	we have
	$\hkr\hkrInv(C_2 - \tilde{C}_2)
	= (C_2 - \tilde{C}_2)^-$.
	Thus applying $\hkr$ to \eqref{eq:relativeForm_proof} yields
	$\hkr\tilde{\Phi}(\star,\tilde\star)^\ver = (C_2 - \tilde{C}_2)^-$
	On the other hand, using the properties of the vertical lift,
	see \eqref{eq:VerticalLiftProps}, we obtain
	\begin{equation*}
		\hkr \tilde{\Phi}(\star,\tilde\star)^\ver\bigl(\Jmap(s),\Jmap(t)\bigr)
		= \pr^*\tilde{\Phi}(\star,\tilde\star)(s,t)
	\end{equation*}
	for all $s,t \in \Secinfty(A)$.
	Therefore, we have
	$\pr^*\tilde{\Phi}(\star,\tilde\star)(s,t)
	= (C_2 - \tilde{C}_2)^-(\Jmap(s),\Jmap(t))$.
	On the other hand we know that $\star$ and $\tilde\star$ agree up to order
	$1$ and thus $\del(C_2 - \tilde{C}_2) = 0$.
	This yields \eqref{eq:RelativeForm}.
	To show that $\tilde{\Phi}(\star)$ is $\D_A$-closed let
	$s, t, u \in \Secinfty(A)$
	be given, then
	\begin{align*}
		[\Jmap(s),[\Jmap(t),\Jmap(u)]_\star]_\star
		&= - \hbar^2\Jmap([s,[t,u]_A]_A)
		\\
		&\phantom{=}
		+\I \hbar^3 \pr^*\bigl(\rho_A(s)\iota^*(C_2 - \tilde{C}_2)^{-}(\Jmap(t),\Jmap(u))\bigr)
		\\
		&\phantom{=}
		+ \I \hbar^3 (C_2)^-\bigl(\Jmap(s),\Jmap([t,u]_A)\bigr).
	\end{align*}
	And the same holds for $\tilde\star$.
	The difference of the Jacobi identities of
	$[\argument,\argument]_\star$ and $[\argument,\argument]_{\tilde\star}$
	yields in order $\hbar^3$
	exactly
	$\D_A\tilde\Phi(\star,\tilde\star)(s,t,u) = 0$.

	For the second part let
	$S = \sum_{r=1}^\infty \hbar^r S_r$
	be a homogeneous equivalence between $\star$ and $\tilde\star$, i.e.
	$F \mathbin{\tilde\star} G = S^{-1}\bigl(S(F) \star S(G)\bigr)$
	for all $F,G \in \Cinfty(A^*)$.
	In first order of $\hbar$ this means that for 
	$F,G \in \Cinfty(A^*)$ we get
	\begin{equation*}
		\tilde{C}_1(F, G)
		= C_1(F,G) + (\del S_1)(F,G).
	\end{equation*}
	From $C_1 = \tilde{C}_1$ it follows that $\del S_1 = 0$,
	and thus $S_1$ is a vector field on $A^*$.
	Moreover, by assumption it is homogeneous of degree $-1$
	and therefore there exists $\alpha \in \Secinfty(A^*)$
	such that $S_1 = - \I \alpha^\ver$.
	Finally, for $s,t \in \Secinfty(A)$, we compute 
	\begin{align*}
		(\tilde{C}_2)^-\bigl(\Jmap(s),\Jmap(t)\bigr)
		&= C_2^-\bigl(\Jmap(s),\Jmap(t)\bigr)
			+ C_1^-\bigl(S_1(\Jmap(s)),\Jmap(t)\bigr)
			+ C_1^-\bigl(\Jmap(s), S_1(\Jmap(t))\bigr)
			- S_1 \bigl(C_1^-(\Jmap(s),\Jmap(t))\bigr)
		\\
		&= C_2^-\bigl(\Jmap(s),\Jmap(t)\bigr)
			+ \pr^*\rho_A(t)(\alpha(s))
			- \pr^* \rho_A(s)(\alpha(t))
			+ \pr^* \alpha([s,t]_A)
		\\
		&= C_2^-(\Jmap(s),\Jmap(t))
			+ \pr^*(\D_A\alpha)(s,t).
	\end{align*}
	and thus $\tilde\Phi(\star,\tilde\star) = \D_A \alpha$.
	The last part follows directly from
	\eqref{eq:RelativeForm}.
\end{proof}

\begin{proposition}[Relative class]
	\label{prop:relativeClass}
	Let $A \to M$ be a Lie algebroid.
	The map
	\begin{equation}
		\Phi \colon \Def_\homogen(A^*) \times \Def_\homogen(A^*) \to \Cohom^2(A),
		\qquad
		\Phi([\star],[\tilde\star])
		\coloneqq [\tilde\Phi(\star',\tilde{\star}')],
	\end{equation}
	where $\star'$
	and $\tilde{\star}'$ are any 
	homogeneous star products
	homogeneously equivalent to $\star$ and $\tilde\star$, respectively,
	such that they agree up to order $1$,
	is well-defined.
\end{proposition}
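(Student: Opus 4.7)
The plan is to show well-definedness in two stages: first that representatives agreeing up to order $1$ exist for any pair of classes, and second that the resulting cohomology class in $\Cohom^2(A)$ does not depend on the choice of such representatives. For the existence, given arbitrary representatives of $[\star]$ and $[\tilde\star]$, I would apply \autoref{lem:EquivHomoStarProducts} to replace each by a homogeneously equivalent homogeneous star product whose first order is exactly $\frac{\I}{2}\{\argument,\argument\}_\KKS$. Any two representatives obtained this way automatically agree up to order $1$, so \autoref{lem:RelativeForm_1} applies and yields a well-defined $\D_A$-closed form $\tilde\Phi(\star',\tilde\star')\in\Secinfty(\Anti^2 A^*)$ whose class in $\Cohom^2(A)$ is a candidate for $\Phi([\star],[\tilde\star])$.

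For independence, let $\star',\star''$ be two representatives of $[\star]$ and $\tilde\star',\tilde\star''$ two representatives of $[\tilde\star]$, each pair in the normalised form so that $\star',\star'',\tilde\star',\tilde\star''$ all have the same first order $\frac{\I}{2}\{\argument,\argument\}_\KKS$. I would then invoke the cocycle identity of \autoref{lem:RelativeForm_3}, applied to each of the triples $(\star',\star'',\tilde\star'')$ and $(\star'',\tilde\star'',\tilde\star')$, to obtain
\begin{equation*}
	\tilde\Phi(\star',\tilde\star') - \tilde\Phi(\star'',\tilde\star'')
	= \tilde\Phi(\star',\star'') + \tilde\Phi(\tilde\star'',\tilde\star').
\end{equation*}
The proof is then completed by showing each term on the right-hand side is $\D_A$-exact. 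Since $\star'$ and $\star''$ both represent $[\star]$, they are homogeneously equivalent through the composition of the given homogeneous equivalences (and its inverse, which is again homogeneous since the homogeneity condition on an equivalence transformation is preserved under taking inverses in $\Diffop(E)\formal{\hbar}$). Hence \autoref{lem:RelativeForm_2} produces an $\alpha_1\in\Secinfty(A^*)$ with $\tilde\Phi(\star',\star'') = \D_A\alpha_1$, and analogously $\tilde\Phi(\tilde\star'',\tilde\star') = \D_A\alpha_2$. The right-hand side above is therefore $\D_A$-exact, so the two cohomology classes $[\tilde\Phi(\star',\tilde\star')]$ and $[\tilde\Phi(\star'',\tilde\star'')]$ coincide.

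The only delicate point I anticipate is the transitivity step: making sure that the composite of a homogeneous equivalence with the inverse of another is still homogeneous in the sense of \autoref{def:HomoStarProduct}. This is routine once one rephrases homogeneity of $S$ as commutation of $\hbar\partial_\hbar + \Lie_\euler$ with $S$, as in the remark after \autoref{def:HomoStarProduct}, since this property is clearly preserved under composition and inversion of formal series in $\Diffop(E)\formal{\hbar}$. Beyond this, the argument is a direct assembly of the three statements of \autoref{lem:RelativeForm} together with the normalisation provided by \autoref{lem:EquivHomoStarProducts}.
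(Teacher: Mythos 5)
Your independence argument is correct as far as it goes: the cocycle identity of \autoref{lem:RelativeForm}~\ref{lem:RelativeForm_3}, the exactness statement \ref{lem:RelativeForm_2}, and the observation that homogeneity of an equivalence (rephrased as commutation with $\hbar\frac{\del}{\del\hbar}+\Lie_\euler$) survives composition and inversion are all used legitimately, and they do show that any two \emph{normalised} representative pairs give cohomologous relative forms. The gap is that the proposition quantifies over \emph{any} pair $(\star',\tilde\star')$ of representatives that merely agree with each other up to order $1$; their common first order need not be $\frac{\I}{2}\{\argument,\argument\}_\KKS$. Indeed, twisting a normalised admissible pair by one and the same homogeneous equivalence $\id+\hbar\Delta$, with $\Delta$ a homogeneous differential operator of degree $-1$ and differential order at least two, produces an admissible pair whose common first order carries a nonzero $\del$-exact symmetric part. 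For two admissible pairs with different common first orders your argument does not apply: the terms $\tilde\Phi(\star',\star'')$ and $\tilde\Phi(\tilde\star'',\tilde\star')$ are not even defined, because \autoref{lem:RelativeForm} requires equal first orders, so neither the cocycle identity nor \ref{lem:RelativeForm_2} can be invoked across the two pairs. What you have proved is well-definedness of the map computed from normalised representatives, which is strictly weaker than the statement.

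The missing ingredient is exactly the step the paper's proof is built on: if one applies one and the same homogeneous equivalence $S$ to both members of an admissible pair, the relative form does not change. This is because the corrections to the second-order terms produced by twisting depend only on $S_1$, $S_2$ and the common first order $C_1=\tilde C_1$, hence cancel in the difference $C_2-\tilde C_2$, which is all that enters \eqref{eq:RelativeForm}. With this invariance an arbitrary admissible pair can be transported to a normalised one without changing $\tilde\Phi$ --- for instance by the equivalence $S=\id-\frac{\hbar}{2}\hkrHomo(C_1^+)$ from the proof of \autoref{lem:EquivHomoStarProducts}, which is the same operator for both members since they share $C_1$ --- and after that your cocycle/exactness argument finishes the proof. (The paper uses the same transport trick in a slightly different arrangement: it moves one reference pair by the equivalence relating the first members, so that the two pairs share a member, and then applies \autoref{lem:RelativeForm}~\ref{lem:RelativeForm_2} and \ref{lem:RelativeForm_3}.) So the gap is genuine but localized; adding the simultaneous-twisting invariance closes it.
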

\begin{proof}
Using \autoref{lem:RelativeForm}~\ref{lem:RelativeForm_2}
and \ref{lem:RelativeForm_3}, 
we can see that if we have a star product $\star$ and two equivalent star products 
$\tilde\star$ and $\dbltilde\star$, such that all three coincide up to order $1$, then 
\begin{align*}
	[\tilde\Phi(\star,\tilde\star)] = [\tilde\Phi(\star,\dbltilde\star)].
\end{align*}
Moreover, since
$\tilde\Phi(\star,\tilde\star) = -\tilde\Phi(\tilde\star,\star)$,
the same holds true for the first argument.
Let now $\star'$ and $\tilde{\star}'$ be two star products 
which coincide up to order $1$, such that $\star'$ is equivalent to $\star$ 
and $\tilde{\star}'$ is equivalent to $\tilde\star$.
We can apply the equivalence of $\star'$
and $\star$ to $\tilde\star$ to obtain a star product $\tilde{\star}''$, which coincides with 
$\star'$
up to order $1$ and thus also with $\tilde{\star}'$.
Since $\star$ and $\tilde\star$ coincide up to 
order $1$, we obtain 
\begin{align*}
	\tilde{\Phi}(\star,\tilde\star)
	=
	\tilde{\Phi}(\tilde{\star},\tilde{\star}''). 
\end{align*}
Since, $\tilde{\star}'$ and $\tilde{\star}''$
are equivalent and coincide up to order one, we get 
\begin{align*}
	[\tilde\Phi(\tilde\star,\tilde\star')]
	= 
	[\tilde\Phi(\tilde\star,\tilde\star'')]
	= 
	[\tilde{\Phi}(\star,\tilde\star)]
\end{align*}
and thus the characteristic class is well-defined. 
\end{proof}
We call $\Phi([\star],[\tilde\star])$ the \emph{relative class}
of $\star$ and $\tilde\star$.
We will often write 
$\Phi(\star,\tilde{\star})$
instead of $\Phi([\star],[\tilde{\star}])$.
We can use this relative class to check if two given homogeneous star products are in fact homogeneously equivalent.

\begin{lemma}
\label{lem:RelativeClassEquivalence}
	Let $A \to M$ be a Lie algebroid
	and let $\star$ and $\tilde{\star}$ be two homogeneous star products on $A^*$.
	\begin{lemmalist}
		\item \label{lem:RelativeClassEquivalence_1}
		The homogeneous star products $\star$ and $\tilde\star$
		are homogeneously equivalent if and only if $\Phi(\star,\tilde{\star}) = 0$.
		\item \label{lem:RelativeClassEquivalence_2}
		Let $\nabla$ and $\nabla'$ be homogeneous covariant derivatives on $A^*$ and let 
		$B \in \Secinfty(\Anti^2 A^*)$ be closed.
		Then $\star_{\scriptscriptstyle(\nabla,B)}$ and $\star_{\scriptscriptstyle(\nabla',B)}$,
		as defined in \autoref{prop:ExistenceHomStarProd},
		are homogeneously equivalent.
	\end{lemmalist}
\end{lemma}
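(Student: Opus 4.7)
The plan is to derive part~\ref{lem:RelativeClassEquivalence_1} by cohomological bookkeeping on top of the results already established, and then to deduce part~\ref{lem:RelativeClassEquivalence_2} as an almost immediate corollary.

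For the forward direction of part~\ref{lem:RelativeClassEquivalence_1}, suppose $\star$ and $\tilde\star$ are homogeneously equivalent. Using \autoref{lem:EquivHomoStarProducts} I would replace $\tilde\star$ by a homogeneously equivalent $\tilde{\star}'$ whose first order coincides with $C_1 = \tfrac{\I}{2}\{\argument,\argument\}_\KKS$. Then $\star$ and $\tilde{\star}'$ agree up to order $1$ and remain homogeneously equivalent, so \autoref{lem:RelativeForm}\ref{lem:RelativeForm_2} gives $\tilde\Phi(\star,\tilde{\star}')=\D_A\alpha$, whence $\Phi(\star,\tilde\star)=0$ in $\Cohom^2(A)$.

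For the converse, assume $\Phi(\star,\tilde\star)=0$. After again normalizing via \autoref{lem:EquivHomoStarProducts} I may assume $C_1=\tilde C_1$, so that $\tilde\Phi(\star,\tilde\star)=\D_A\alpha$ for some $\alpha\in\Secinfty(A^*)$. Since $\alpha^\ver$ is homogeneous of degree $-1$, the transformation $S=\id-\I\hbar\alpha^\ver$ is a homogeneous equivalence. Let $\star'$ be the star product obtained by transferring $\star$ along $S$. The explicit computation in the proof of \autoref{lem:RelativeForm}\ref{lem:RelativeForm_2} gives $\tilde\Phi(\star,\star')=\D_A\alpha$, and the cocycle identity of \autoref{lem:RelativeForm}\ref{lem:RelativeForm_3} then yields $\tilde\Phi(\star',\tilde\star)=0$. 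This vanishing at the level of forms means $\hkrInv(C_2'-\tilde C_2)=0$; combined with $\del(C_2'-\tilde C_2)=0$, \autoref{lem:Skewstuff} shows that $C_2'-\tilde C_2=\del T_2$ with $T_2=\hkrHomo(C_2'-\tilde C_2)$ homogeneous of degree $-2$ by \autoref{prop:HomoHKRTheorem}. Then $\id+\hbar^2 T_2$ is a homogeneous equivalence up to order $2$ between $\star'$ and $\tilde\star$, and \autoref{prop:EquivHomStar} promotes this to a full homogeneous equivalence.

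For part~\ref{lem:RelativeClassEquivalence_2} I would simply inspect the construction of \autoref{prop:ExistenceHomStarProd}. The map $\hkr$ is independent of the chosen connection; only $\hkrHomo$ depends on it. Since $C_1$ and the additive term $\hkr B^\ver$ are the same for both choices, the difference $C_2-C_2'$ reduces to $-\tfrac14\bigl((\hkrHomo_{\nabla}-\hkrHomo_{\nabla'})[C_1,C_1]\bigr)^+$, which is the symmetrization of a bidifferential operator and hence symmetric. Its antisymmetrization vanishes identically, so $\tilde\Phi(\star_{\scriptscriptstyle(\nabla,B)},\star_{\scriptscriptstyle(\nabla',B)})=0$ already as a form and part~\ref{lem:RelativeClassEquivalence_1} yields the homogeneous equivalence.

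The main obstacle is in part~\ref{lem:RelativeClassEquivalence_1}: one must carefully track that after the first-order normalization and the first-order correction by $\alpha$, the residual second-order discrepancy is both Hochschild-closed and has vanishing skew-part, so that it can be eliminated by a single Hochschild-exact term of the correct homogeneity. Everything else is a direct assembly of \autoref{lem:Skewstuff}, \autoref{prop:HomoHKRTheorem}, \autoref{prop:EquivHomStar}, and the structural properties of $\tilde\Phi$ from \autoref{lem:RelativeForm}.
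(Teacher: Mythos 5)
Your proposal is correct and follows essentially the same route as the paper: normalize the first order via \autoref{lem:EquivHomoStarProducts}, absorb the exact part $\D_A\alpha$ by an equivalence generated by $\alpha^\ver$, use \autoref{lem:Skewstuff} and \autoref{prop:HomoHKRTheorem} to remove the remaining closed, symmetric second-order discrepancy, and conclude with \autoref{prop:EquivHomStar}; part two is likewise handled by noting the antisymmetric parts of the second orders coincide. The only caveat is cosmetic: in the forward direction one should normalize both $\star$ and $\tilde\star$ to have first order $\tfrac{\I}{2}\{\argument,\argument\}_\KKS$, and the precise constant in $S_1\propto\alpha^\ver$ is a sign/convention detail that does not affect the argument.
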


\begin{proof}
	From \autoref{lem:RelativeForm} we know that homogeneously equivalent star products satisfy
	$\Phi(\star,\tilde{\star}) = 0$.
	Thus assume $\Phi(\star) = \Phi(\tilde\star)$.
	We now need to show that $\star$ and $\tilde\star$ 
	are homogeneously equivalent.
	By \autoref{lem:EquivHomoStarProducts} it is enough to construct a homogeneous equivalence up to order $2$.
	We do this in two steps.
	First, using \autoref{lem:EquivHomoStarProducts} we can already assume that both star products coincide up to order $1$.
	Since $\Phi(\star,\tilde{\star}) = 0$
	we can find
	$\alpha \in \Secinfty(A^*)$, such that 
	$\tilde\Phi(\star, \tilde\star)= \D_A \alpha$.
	We consider the vertical lift $\alpha^\ver \in \Secinfty(TA^*)$ of $\alpha$ and define
	\begin{equation*}
		S \coloneqq \id - \hbar \Lie_{\alpha^\ver}.
	\end{equation*}
	Note that $S$ is indeed a homogeneous equivalence, see \autoref{sec:HomogeneityOnVB}.
	Let us define a new homogeneous star product by
	$F \mathbin{\star_{\scriptscriptstyle{S}}} G
	\coloneqq S^{-1} \bigl(S(F) \star S(G)\bigr)$
	for all $F,G \in \Cinfty(A^*)$.
	Since $\Lie_{\alpha^\ver}$ is a derivation we have
	$F \mathbin{\star_{\scriptscriptstyle{S}}} G
	= FG + \frac{\I\hbar}{2} \{F, G \}_\KKS + \sum_{r=2}^\infty \hbar^r B_r$.
	Moreover, one can compute
	\begin{align*}
		B_2(\Jmap(s),\Jmap(t))
		= \Lie_{\alpha^\ver} \{ \Jmap(s), \Jmap(t) \}_\KKS
			- \left\{ \Lie_{\alpha^\ver} \Jmap(s), \Jmap(t) \right\}_\KKS
			- \left\{ \Jmap(s), \Lie_{\alpha^\ver} \Jmap(t) \right\}_\KKS
	\end{align*}
	and thus
	\begin{align*}
		\big(B_2^{-} - C_2^{-} \big)(\Jmap(s),\Jmap(t))
		&= \Lie_{\alpha^\ver} \{ \Jmap(s), \Jmap(t) \}_\KKS
			- \left\{ \Lie_{\alpha^\ver} \Jmap(s), \Jmap(t) \right\}_\KKS
			- \left\{ \Jmap(s), \Lie_{\alpha^\ver} \Jmap(t) \right\}_\KKS
		\\
		&= \Lie_{\alpha^\ver}\Jmap([s,t]_E)
			- \left\{ \pr^* \alpha(s), \Jmap(t) \right\}_\KKS
			- \left\{ \Jmap(s), \pr^* \alpha(t) \right\}_\KKS
		\\
		&= \pr^* \alpha([s,t]_A)
			+ \pr^* \rho_A(t) (A(s))
			- \pr^* \rho_A(s) (A(t))
		\\
		&= - \pr^*(\D_A \alpha)(s,t).
	\end{align*}
	Thus we have shown
	$\tilde\Phi(\star_{\scriptscriptstyle{S}},\star)
	= - \D_A \alpha$,
	and in particular
	$\tilde{\Phi}(\star_{\scriptscriptstyle{S}},\tilde{\star})
	= \tilde{\Phi}(\star_{\scriptscriptstyle{S}},\star) + \tilde{\Phi}(\star,\tilde{\star})
	=0$.
	In a second step we want to relate $\star_{\scriptscriptstyle{S}}$ with $\tilde{\star}$.
	Using the Maurer-Cartan equation, we see that 
	\begin{equation*}
		\del(\tilde{C}_2-B_2)
		= \frac 12 \bigl([\tilde{C}_1,\tilde{C}_1]-[B_1,B_1] \bigr)
		= 0,
	\end{equation*}
	which implies that
	$\tilde{C}_2 - B_2
	= \hkr\bigl(\hkrInv(\tilde{C}_2 - B_2)\bigr) + \del \hkrHomo(\tilde{C}_2 - B_2)$
	by the homotopy formula for the Hochschild differential.
	For closed bidifferential operators, we know that the skew-symmetrization is a bivector field 
	and we have
	$\hkrInv(\tilde{C}_2 - B_2)
	= \tilde\Phi(\tilde{\star},\star_{\scriptscriptstyle{S}})^\ver 
	= 0$.
	This means we can use
	$\hkrHomo(\tilde{C}_2 - B_2) \in \Diffop(A^*)$
	to define 
	\begin{equation*}
		T \coloneqq \id + \hbar^2 \hkrHomo(\tilde{C}_2 - B_2).
	\end{equation*}  
	It is a homogeneous equivalence of $\tilde{\star}$ and $\star_{\scriptscriptstyle{S}}$ up to order to.
	Thus by \autoref{lem:EquivHomoStarProducts} there exists a homogeneous equivalence
	between $\star$ and $\tilde{\star}$.

	The second part follows directly from the third by observing that the anti-symmetric parts of the second order of  $\star_{\scriptscriptstyle(\nabla,B)}$ and $\star_{\scriptscriptstyle(\nabla',B)}$ are both given by $\hkr(B^\ver)$.
\end{proof}
By \autoref{prop:ExistenceHomStarProd} we know that we can construct for every closed Lie algebroid $2$-form
$B \in \Secinfty(\Anti^2 A^*)$ a homogeneous star product
$\star_{\scriptscriptstyle(\nabla,B)}$.
In particular, we can choose $B = 0$.
We can now define a characteristic class for every homogeneous star product by comparing it to $\star_{\scriptscriptstyle(\nabla,0)}$.

\begin{definition}[Characteristic class]
	\label{def:characteristicClass}
	Let $A$ be a Lie algebroid.
	We define the map $\Phi \colon \Def_\homogen \to \Cohom^2(A)$ by
	$\Phi([\star]) \coloneqq \Phi([\star],[\star_{\scriptscriptstyle(\nabla,0)}])$.
	For any star product $\star$ we call $\Phi([\star])$ its \emph{characteristic class}.
\end{definition}

Note that by \autoref{lem:RelativeClassEquivalence}~\ref{lem:RelativeClassEquivalence_2}
the definition of $\Phi$ is indeed independent of the choice of the homogeneous covariant derivative
$\nabla$.
Moreover, by definition of $\star_{\scriptscriptstyle(\nabla,0)}$ the characteristic class is given by
\begin{equation}
	\label{eq:HomCharClass}
	\Phi(\star)(s,t) = 
	\iota^*C_2^{-}\bigl(\Jmap(s),\Jmap(t)\bigr),
\end{equation}
for all $s,t \in \Secinfty(A)$.
Therefore one could equivalently define the characteristic class directly by \eqref{eq:HomCharClass}, without referring to the reference star product $\star_{\scriptscriptstyle(\nabla,0)}$.

\begin{corollary}
	\label{cor:CharClassOfConstruction}
	Let $A$ be a Lie algebroid.
	Moreover, let $\nabla$ be a homogeneous covariant derivative on $A^*$ and let $B \in \Secinfty(\Anti^2 A^*)$ be closed.
	Then $\Phi(\star_{\scriptscriptstyle(\nabla,B)}) = [B]$.
\end{corollary}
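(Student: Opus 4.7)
The plan is to unfold both the definition of the characteristic class and the construction of $\star_{\scriptscriptstyle(\nabla,B)}$, and to show that the antisymmetric part of the second order reproduces $B$ on linear functions.

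First I would observe that, by construction, both $\star_{\scriptscriptstyle(\nabla,B)}$ and the reference $\star_{\scriptscriptstyle(\nabla,0)}$ have the same first-order term $C_1 = \frac{\I}{2}\{\argument,\argument\}_\KKS$, so they already agree up to order $1$. Hence \autoref{lem:RelativeForm}~\ref{lem:RelativeForm_1} directly yields
\begin{equation*}
	\Phi(\star_{\scriptscriptstyle(\nabla,B)})(s,t)
	= \iota^*\bigl(C_2^{\scriptscriptstyle(B)} - C_2^{\scriptscriptstyle(0)}\bigr)^-\bigl(\Jmap(s),\Jmap(t)\bigr)
\end{equation*}
for all $s,t \in \Secinfty(A)$, where the superscripts indicate which star product. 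By the explicit formula from \autoref{prop:ExistenceHomStarProd}, the symmetric piece $-\frac{1}{4}(\hkrHomo[C_1,C_1])^+$ cancels in the difference, leaving $C_2^{\scriptscriptstyle(B)} - C_2^{\scriptscriptstyle(0)} = \hkr B^\ver$.

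Next I would use that $\hkr$ of a bivector field is already anti-symmetric, so $(\hkr B^\ver)^- = 2\,\hkr B^\ver$. It then remains to evaluate $\hkr B^\ver$ on two linear functions, which is the real computational step. Working locally with a frame $\{\alpha_a\}$ of $A^*$ and writing $B = \frac{1}{2} B^{ab}\alpha_a\wedge\alpha_b$, the vertical lift is $B^\ver = \frac{1}{2}\pr^*B^{ab}\,\alpha_a^\ver\wedge\alpha_b^\ver$. Using the definition of $\hkr$ and the vertical-lift identity $\Lie_{\alpha^\ver}(\Jmap(s))=\pr^*\alpha(s)$ from \eqref{eq:VerticalLiftProps}, a direct calculation yields
\begin{equation*}
	\hkr B^\ver\bigl(\Jmap(s),\Jmap(t)\bigr)
	= \tfrac{1}{2}\pr^*B(s,t),
\end{equation*}
so that $(\hkr B^\ver)^-(\Jmap(s),\Jmap(t)) = \pr^*B(s,t)$ and pulling back along $\iota$ produces exactly $B(s,t)$.

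Combining these steps gives $\Phi(\star_{\scriptscriptstyle(\nabla,B)})(s,t) = B(s,t)$, hence $\Phi(\star_{\scriptscriptstyle(\nabla,B)}) = [B]$ in $\Cohom^2(A)$. The main obstacle is really only the local computation in the second paragraph: one must be careful with the combinatorial factors in the definition of $\hkr$ and the sign conventions for $D^-$, which is why I would carry it out in a frame rather than invariantly. Everything else is a direct application of already-established results: \autoref{lem:RelativeForm} for the formula, \autoref{prop:ExistenceHomStarProd} for the structure of $C_2$, and \autoref{lem:Skewstuff} to recognize $\hkr B^\ver$ as the antisymmetric contribution.
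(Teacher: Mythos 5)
Your argument is correct and essentially coincides with the paper's own (there the corollary is stated as an immediate consequence of \eqref{eq:HomCharClass} and the construction in \autoref{prop:ExistenceHomStarProd}): you unwind the relative class against $\star_{\scriptscriptstyle(\nabla,0)}$, note that the symmetric term $-\frac{1}{4}(\hkrHomo[C_1,C_1])^+$ cancels in the difference, and evaluate $\hkr B^\ver$ on linear functions. The only cosmetic difference is that you perform this last evaluation in a local frame rather than via the vertical-lift identity invoked in the proof of \autoref{lem:RelativeForm}, and you keep the wedge/antisymmetrization conventions straight ($(\hkr B^\ver)^- = 2\,\hkr B^\ver$ and $\hkr B^\ver\bigl(\Jmap(s),\Jmap(t)\bigr) = \frac{1}{2}\pr^*B(s,t)$), arriving consistently at the same class $[B]$.
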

With this we finally arrive at the full classification of homogeneous star products.

\begin{theorem}[Classification]
	\label{thm:Classification}
	Let $A$ be a Lie algebroid.
	\begin{theoremlist}
		\item Two homogeneous star products $\star$ and $\tilde{\star}$ on $A^*$
		are homogeneously equivalent if and only if their characteristic classes coincide, i.e.
		$\Phi(\star) = \Phi(\tilde{\star})$.
		\item The map $\Phi \colon \Def_\homogen(A^*) \to \Cohom^2(A)$ is a bijection.
	\end{theoremlist}
\end{theorem}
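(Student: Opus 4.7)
The plan is to deduce both parts of the classification directly from the relative-class machinery that has already been assembled, so there is essentially no new construction to do; the proof is a formal bookkeeping of what has been established.

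For part (i), I would first reduce the statement to the relative class. The additivity established in \autoref{lem:RelativeForm}~\ref{lem:RelativeForm_3}, which persists on cohomology by the well-definedness argument in \autoref{prop:relativeClass}, gives for any two homogeneous star products $\star,\tilde\star$ on $A^*$ the identity
\begin{equation*}
	\Phi(\star,\star_{\scriptscriptstyle(\nabla,0)})
	= \Phi(\star,\tilde\star) + \Phi(\tilde\star,\star_{\scriptscriptstyle(\nabla,0)}),
\end{equation*}
so that $\Phi(\star) - \Phi(\tilde\star) = \Phi(\star,\tilde\star)$ by definition of the characteristic class. Hence $\Phi(\star) = \Phi(\tilde\star)$ if and only if the relative class $\Phi(\star,\tilde\star)$ vanishes, which by \autoref{lem:RelativeClassEquivalence}~\ref{lem:RelativeClassEquivalence_1} is precisely the condition that $\star$ and $\tilde\star$ be homogeneously equivalent.

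For part (ii), injectivity of $\Phi$ is exactly the statement of part (i) read on equivalence classes. For surjectivity I would argue constructively: given any class $c \in \Cohom^2(A)$, pick a closed representative $B \in \Secinfty(\Anti^2 A^*)$, and any torsion-free homogeneous covariant derivative $\nabla$ on $A^*$ (which exists by \autoref{prop:HomoHKRTheorem}). Then \autoref{prop:ExistenceHomStarProd} produces a homogeneous star product $\star_{\scriptscriptstyle(\nabla,B)}$ and \autoref{cor:CharClassOfConstruction} computes its characteristic class as $\Phi(\star_{\scriptscriptstyle(\nabla,B)}) = [B] = c$, proving that every class in $\Cohom^2(A)$ is attained.

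I do not see a genuine obstacle here; the only point that deserves a brief explicit mention is that independence of $\Phi$ from the auxiliary choice of $\nabla$ in \autoref{def:characteristicClass} (which uses \autoref{lem:RelativeClassEquivalence}~\ref{lem:RelativeClassEquivalence_2}) is what makes the assignment $[\star] \mapsto \Phi(\star)$ well-defined on $\Def_\homogen(A^*)$, so that the bijection makes sense at all; once this is acknowledged, both statements follow formally from the relative-class calculus.
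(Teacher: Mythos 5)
Your argument is correct and is essentially the paper's own proof: the same identity $\Phi(\star,\tilde\star) = \Phi(\star) - \Phi(\tilde\star)$ deduced from additivity of the relative class, with part (i) then cited from \autoref{lem:RelativeClassEquivalence} and surjectivity in part (ii) from \autoref{prop:ExistenceHomStarProd} together with \autoref{cor:CharClassOfConstruction}. Your additional remark on the independence of the reference star product from the choice of $\nabla$ matches the paper's comment following \autoref{def:characteristicClass}.
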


\begin{proof}
	Note that
	$\Phi(\star,\tilde{\star})
	= \Phi(\star,\star_{\scriptscriptstyle(\nabla,0)}) - \Phi(\tilde{\star},\star_{\scriptscriptstyle(\nabla,0)})
	= \Phi(\star) - \Phi(\tilde{\star})$.
	With this, the first part is given by 
	\autoref{lem:RelativeClassEquivalence}.
	For the second we only need to show surjectivity.
	This was shown in \autoref{prop:ExistenceHomStarProd} together with \autoref{cor:CharClassOfConstruction}.
\end{proof}

\begin{corollary}
	\label{eq:HomoEquivIsEquiv}
	Let $A$ be a Lie algebroid.
	Two homogeneous star products $\star$ and $\tilde\star$ on $A^*$ are equivalent
	if and only if they are homogeneously equivalent.  
\end{corollary}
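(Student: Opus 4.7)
The forward direction is tautological, so the plan is to prove the converse by leveraging the classification in \autoref{thm:Classification}: if $\star$ and $\tilde{\star}$ are equivalent, I aim to show that their characteristic classes agree, from which homogeneous equivalence follows immediately.

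Using \autoref{lem:EquivHomoStarProducts}, I may first replace $\star$ and $\tilde{\star}$ by homogeneously equivalent representatives in which $C_1 = \tilde{C}_1 = \frac{\I}{2}\{\argument,\argument\}_\KKS$. Let $S = \id + \sum_{r \geq 1}\hbar^r S_r$ be the (possibly non-homogeneous) equivalence between them. Expanding $S(f\star g) = S(f) \mathbin{\tilde\star} S(g)$ in order $\hbar$ and using $C_1 = \tilde{C}_1$ gives $\del S_1 = 0$, so that $S_1$ is a vector field on $A^*$. In order $\hbar^2$, a standard calculation yields
\begin{equation*}
(\tilde{C}_2 - C_2)(f,g) = \Lie_{S_1} C_1(f,g) - (\del S_2)(f,g) - S_1(f) S_1(g).
\end{equation*}
The last summand is symmetric in $f,g$, and a direct check shows $(\del S_2)^- = 0$ for any $S_2 \in \HCdiff^0(A^*)$ (equivalently, $\AntiSymmetrizer \circ \del = 0$ as recalled in the proof of \autoref{lem:Skewstuff}). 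Hence
\begin{equation*}
(C_2 - \tilde{C}_2)^-(f,g) = -2\Lie_{S_1} C_1(f,g).
\end{equation*}

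The next step is to compute $\tilde\Phi(\star,\tilde{\star})(s,t) = \iota^*(C_2 - \tilde{C}_2)^-\bigl(\Jmap(s),\Jmap(t)\bigr)$. Since $S_1$ is a derivation and $\Jmap$ is $\Cinfty(M)$-linear up to pullback, the map $s \mapsto \iota^*\bigl(S_1(\Jmap(s))\bigr)$ is $\Cinfty(M)$-linear and thus defines a section $\alpha \in \Secinfty(A^*)$. Expanding
\begin{equation*}
\Lie_{S_1} C_1(\Jmap(s),\Jmap(t)) = -\tfrac{\I}{2}S_1(\Jmap([s,t]_A)) - \tfrac{\I}{2}\{S_1(\Jmap(s)),\Jmap(t)\}_\KKS - \tfrac{\I}{2}\{\Jmap(s),S_1(\Jmap(t))\}_\KKS
\end{equation*}
and observing from \eqref{eq:KKSPoisson} that $\iota^*\{F,\Jmap(t)\}_\KKS = \rho_A(t)(\iota^*F)$ for any $F \in \Cinfty(A^*)$, a direct computation gives
\begin{equation*}
\iota^*\Lie_{S_1} C_1(\Jmap(s),\Jmap(t)) = \tfrac{\I}{2}\bigl(\rho_A(s)(\alpha(t)) - \rho_A(t)(\alpha(s)) - \alpha([s,t]_A)\bigr) = \tfrac{\I}{2}(\D_A \alpha)(s,t).
\end{equation*}

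Consequently $\tilde\Phi(\star,\tilde\star) = -\I \D_A \alpha$ is $\D_A$-exact, so $\Phi(\star,\tilde\star) = 0$ in $\Cohom^2(A)$, i.e. $\Phi(\star) = \Phi(\tilde\star)$. By \autoref{thm:Classification}, $\star$ and $\tilde{\star}$ are homogeneously equivalent. The main obstacle here is the second-order bookkeeping that isolates, from the arbitrary (non-homogeneous) equivalence $S$, exactly the piece that survives the restriction to the zero section and recognizes it as a Lie algebroid coboundary; once this identification is made, the classification theorem does all the remaining work.
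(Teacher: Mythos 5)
Your proof is correct and follows essentially the same route as the paper: after normalizing both star products to have $C_1 = \tilde C_1 = \tfrac{\I}{2}\{\argument,\argument\}_\KKS$, you show $\del S_1 = 0$, identify the antisymmetric second-order discrepancy restricted to the zero section as $\D_A\alpha$ with $\alpha(s) = \iota^*S_1(\Jmap(s))$, and invoke \autoref{thm:Classification}. The only difference is bookkeeping: you track the $S_2$ and $S_1(f)S_1(g)$ terms and the $\Cinfty(M)$-linearity of $\alpha$ explicitly, which the paper leaves implicit.
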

	
\begin{proof}
	To show the non-trivial implication let us assume that $\star$ and $\tilde{\star}$
	are (not necessarily homogeneous) equivalent homogeneous star products with equivalence $S = \id + \sum_{r=1}^\infty \hbar^r S_r$.
	We can assume that
	$C_1 = \frac{\I\hbar}{2} \{\argument,\argument\}_\KKS = \tilde{C}_1$.
	Then we know that $\del S_1 = 0$ and hence
	$S_1$ is a vector field.
	Moreover, we get for $s,t \in \Secinfty(A)$
	\begin{equation*}
		C_2^-\bigl(\Jmap(s),\Jmap(t)\bigr)
		= \tilde{C}_2^-\bigl(\Jmap(s), \Jmap(t)\bigr)
			- S_1\bigl(\Jmap ([s,t]_E ) \bigr)
			+ \bigl\{ S_1\bigl(\Jmap(s)\bigr), \Jmap(t)\bigr\}_\KKS 
			+ \bigl\{\Jmap(s),S_1\bigl(\Jmap(t)\bigr)\bigr\}_\KKS.
	\end{equation*}
	Let us define $\alpha \in \Secinfty(A^*)$ by 
	$\alpha(s) \coloneqq \iota^*S_1(\Jmap(s))$
	for $s \in \Secinfty(A)$.
	Then the above equation yields
	\begin{align*}
		\tilde\Phi(\star)(s,t)
		= \tilde\Phi(\tilde{\star})(s,t) + \D_A \alpha(s,t),
	\end{align*}
	and thus $\Phi(\star) = \Phi(\tilde{\star})$.
	Then \autoref{thm:Classification} shows that $\star$ and $\tilde{\star}$
	are actually homogeneously equivalent.
\end{proof}

\begin{remark}\
\begin{remarklist}
	\item Note that it is possible to classify homogeneous star products without 
	referring to a construction, see \eqref{eq:HomCharClass}.
	To our knowledge this has so far only been achieved in the symplectic case,
	see \cite{deligne:1995a}.
	\item \autoref{eq:HomoEquivIsEquiv} shows that the classification of homogeneous 
	tar products up to homogeneous equivalences agrees with the ordinary 
	classification of homogeneous star products seen as general star products.
\end{remarklist}
\end{remark}

\subsection{Characteristic Classes of Examples}
\label{sec:Comparison}
The construction from \autoref{sec:ClassificationHomStarProd} is rather easy 
compared to general constructions of star products. This is because of the homogeneity 
condition. On the other hand one can use general constructions to get 
homogeneous star products as well. We want to give a non-exhaustive overview 
of some constructions and compute the characteristic classes of the resulting 
homogeneous star products. 

\begin{example}[Neumaier-Waldmann construction]
In \cite{neumaier.waldmann:2009a} the authors constructed star products 
$\star_{(\nabla,B,\kappa)}$ quantizing the Poisson structure associated 
to a Lie algebroid $A$ out of the following data:
\begin{cptitem}
\item an $A$-connection $\nabla$,
\item a series $B=\sum_{k=0}^\infty \hbar^kB_k$ of $\D_A$-closed Lie algebroid $2$-forms and
\item a parameter $\kappa\in \mathbb{R}$.   
\end{cptitem}
It is shown in Proposition 4.1 of the same paper, that the star product is homogeneous, if $B_k=0$ for all $k\neq 1$. 
One can show additionally that for the choice $\kappa=\tfrac{1}{2}$ the resulting homogeneous star product possesses 
as a first order half of the Poisson bracket. 
Moreover having fixed $B_1$ star products for different $A$-connections
(Proposition 5.4 $iv.)$) and different $\kappa$ 
(Proposition 5.5 $iv.)$) are homogeneously equivalent.
Using the explicit formulas provided in \cite{neumaier.waldmann:2009a},
one can show that
$\tilde\Phi(\star_{(\nabla,\hbar B_1, \tfrac{1}{2})})
= B_1$ 
and thus the characteristic class is given by
$\Phi(\star_{(\nabla,\hbar B_1,\kappa)})
= [B_1]$.
\end{example}

\begin{example}[Kontsevich's Formality]
	\label{Subsec: quant}
	Kontsevich showed in \cite{kontsevich:2003a} that there is an
	$L_\infty$-quasi isomorphism 
	$K$ between $T_\poly^\bullet(\mathbb{R}^d)$ and $\HCdiff^\bullet(\mathbb{R}^d)$, 
	i.e. a sequence of maps 
	$K_n\colon T_\poly^n(\mathbb{R}^d) \to \HCdiff^n(\mathbb{R}^d)$
	of degree $1-n$ fulfilling certain compatibility conditions.
	Moreover, $K_1$ coincides with the Hochschild-Kostant-Rosenberg map $\hkr$.
	In \cite{dolgushev:2005b}, there is an explicit procedure to globalize 
	Kontsevich's quasi-isomorphism to arbitrary smooth manifolds. 
	In addition it is shown that if there is a Lie group $G$ acting on the
	manifold and there is is an invariant covariant derivative,
	the globalization $F^{\nabla}$ can be chosen to be equivariant.
	So let us choose a homogeneous covariant derivative $\nabla$ on $A^*$,
	which means that $F^\nabla$ is equivariant with respect to the Euler 
	vector field $\euler$.
	The bidifferential operators $C_r$ corresponding to the
	Kontsevich-Dolgushev star product corresponding to the linear
	Poisson structure $\pi_\KKS$ are computed by 
	\begin{align}
		C_r	= \frac{1}{r!}F^{\nabla}_r(\pi_\KKS\wedge\dots\wedge\pi_\KKS),
	\end{align}	   
	or equivalently 
	\begin{align}
		\star = \mu_0 
		+ \sum_{r=1}^\infty \frac{1}{r!} F^{\nabla}_r (\pi \wedge\dots\wedge \pi)
	\end{align}		
	for $\pi = \hbar\pi_\KKS$.
	This implies that 
	\begin{align}
		[\Lie_\euler,C_r]
		=
		\frac{1}{r!}F^{\nabla}_r\bigl(\Lie_\euler (\pi_\KKS\wedge\pi_\KKS\wedge\dots\wedge\pi_\KKS)\bigr)
		=
		- \frac{r}{r!}F^{\nabla}_r(\pi_\KKS\wedge\dots\wedge\pi_\KKS)
		=-r C_r.
	\end{align}
	This means in particular
	$\star = \mu_0 + \sum_{r=1}^\infty \hbar^r C_r$
	is a homogeneous star product.
	One can generally show that the Kontsevich star product for the
	Poisson structure $\pi$	is symmetric in the second order
	(see Corollary \ref{Cor: SkewKon}) and that 
	\begin{align}
		f \star g = fg + \frac{\hbar}{2}\{f,g\}_\KKS + \mathcal{O}(\hbar^2).
	\end{align}
	So the characteristic class of the Kontsevich star product is $0$.
	Choosing now a closed Lie algebroid $2$-form
	$B\in \Secinfty(\Anti^2A^*)$
	one can show that
	$\pi \coloneqq \hbar\pi_\KKS + \hbar^2B^\ver$
	is a formal Poisson structure with
	$(\hbar\frac{\del}{\del \hbar}+\Lie_\euler)(\pi) = -\pi$
	and thus the Kontsevich star product of $\pi$ is homogeneous. 
	Moreover, we can see that the anti-symmetrization of the second order term of
	the corresponding star product is given by $\hkr(B^\ver)$ and thus the
	characteristic class of this star product is
	$[B]\in \mathrm{H}^2(A)$.  
\end{example}

\begin{example}[The Universal Enveloping Algebra]
	\label{Ex: UEA}
	We follow \cite{moerdijk.mrcun:2003a} to construct the universal enveloping algebra of a Lie algebroid. 
	For a Lie algebroid $(A\to M,[\argument,\argument],\rho)$, 
	we first consider the sum $\Secinfty(A)\oplus\Cinfty(M)$. For a closed 
	$B\in \Secinfty(\Anti^2A^*)$, we define now the bracket
	\begin{align}
		[(a,f),(b,g)]_B \coloneqq ([a,b],\rho(a)g-\rho(b)f + B(a,b)),
	\end{align}	  
	which is a Lie bracket by the closedness of $B$. 
	We consider now the (reduced) universal enveloping algebra 
	$\cc{\mathcal{U}}(\Secinfty(A)\oplus\Cinfty(M))$ with the inclusion 
	$\iota\colon \Secinfty(E)\oplus\Cinfty(M)\to \cc{\mathcal{U}}(\Secinfty(A)\oplus\Cinfty(M))$ 
	and define 
	\begin{align}
		\mathcal{U}(A)=\cc{\mathcal{U}}(\Secinfty(A)\oplus\Cinfty(M))/I,
	\end{align}
	where $I$ is the ideal generated by the relation $\iota(0,f)\iota(a,g)\sim \iota(fa,fg)$ 
	for $f,g\in\Cinfty(M)$ and $a\in \Secinfty(A)$.
	Let us denote by $\circ_B$ the product of $\mathcal{U}(A)$ and choose an $A$-connection $\nabla$. 
	We define now recursively the map 
	$\mathrm{pbw}^\nabla\colon \Secinfty(\Sym^\bullet A) \to \mathcal{U}(A)$ by 
	\begin{align}
		\mathrm{pbw}^\nabla_0(f)
		=
		\iota(0,f) 
		\quad \text{ and } \quad 
		\mathrm{pbw}^\nabla_1(s)
		=
		\iota(s,0)
	\end{align}
	for $f \in \Cinfty(M) = \Secinfty(\Sym^0A)$
	and $s \in \Secinfty(A)=\Secinfty(\Sym^1A)$. 
	For $s_1 \vee \cdots \vee s_{k+1}\in \Secinfty(\Sym^{k+1}A)$, we define 
	\begin{align}
		\mathrm{pbw}^\nabla_{k+1}(s_1\vee \cdots\vee s_{k+1})
		=
		\frac{1}{k+1}\sum_{i=1}^{k+1} s_i\circ_B\mathrm{pbw}^\nabla_k(s_1\vee\overset{\wedge_i}{\dots}\vee s_{k+1})
		+
		\mathrm{pbw}^\nabla_k\bigl(\nabla_{s_i} (s_1\vee\overset{\wedge_i}{\dots}\vee s_{k+1})\bigr).
	\end{align}
	Note that this map actually fulfils $\mathrm{pbw}^\nabla(fs)=\iota(0,f)\circ_B\mathrm{pbw}^\nabla(s)$ for 
	all $s \in \Secinfty(\Sym A)$ and $f\in \Cinfty(M)$.
	In \cite{laurent-gengoux.stienon.xu:2021a} it is actually shown
	that this map is an isomorphism of $\Cinfty(M)$-modules.  
	Using this we can define the associative product 
	\begin{align}
		S\tilde\star_{\scriptscriptstyle{B}} T
		\coloneqq \sum_{i=0}^{k+l} \hbar^i \pr_{\Secinfty(\Sym^{k+l-i} E)}
		(\mathrm{pbw}^\nabla)^{-1}\bigl(\mathrm{pbw}^\nabla(s)
		\circ_B \mathrm{pbw}^\nabla(t)\bigr)
	\end{align}
	for $S\in\Secinfty(\Sym^k A)$ and $T\in\Secinfty(\Sym^lA)$.
	Moreover, we have 
	$S \tilde\star_{\scriptscriptstyle{B}} T=ST+\mathcal{O}(\hbar)$ 
	and the induced Poisson structure of
	$\tilde\star_{\scriptscriptstyle{B}}$ is actually 
	$\pi_\KKS$ restricted to polynomial functions.
	For $s,t\in \Secinfty(E)$ and $f\in\Cinfty(M)$, 
	we have additionally 
	\begin{align}
		[s,t]_{\star_{\scriptscriptstyle{B}}}
		= \hbar [s,t] + \hbar^2 B(s,t)
		\qquad\text{and}\qquad
		f \star_{\scriptscriptstyle{B}} s 
		= fs.
	\end{align}
	To the best of our knowledge there is no proof in the literature that the product $\tilde\star_{\scriptscriptstyle{B}}$
	is defined by bidifferential operators, i.e. that there is a homogeneous star product $\star_{\scriptscriptstyle{B}}$
	on $A^*$ which restricted to polynomial functions is given by $\tilde\star_{\scriptscriptstyle{B}}$. Anyway, 
	if we assume that this is the case one can show that its characteristic class is 
	$[B]\in \Cohom^2(A)$.
\end{example}

Note that in the case when the Lie algebroid is actually a 
Lie algebra we know that the construction 
in \autoref{Ex: UEA} actually gives bidifferential
 operators and therefore we can obtain a star product in this way. 
 This is the so-called \emph{Gutt star product}. In \cite{dito:1999a}
 it is proven that the constructions of Kontsevich and Gutt 
 are equivalent and the equivalence can be given by explicit 
 formulas.
 We believe that we can recover these formulas with our, 
 more general, results.

\section{Reduction of Homogeneous Star Products}
\label{sec:ReductionHomStarProd}

Every homogeneous star product $\star$ on a vector bundle $E_\Total \to M$ induces a linear Poisson
structure on $E_\Total$, see \autoref{lem:StarProdInducesLinearPoisson}.
Given a coisotropic submanifold of $E_\Total$ the question arises if $\star$ induces
a star product on the reduced space.

To be more precise, we say that a vector subbundle
$I \colon E_\Normal \hookrightarrow E_\Total$
over a submanifold $i \colon C \hookrightarrow M$
is a \emph{linear submanifold}.
Then a linear coisotropic submanifold $E_\Normal$ induces a 
linear distribution $D \subseteq T E_\Normal$
and a distribution $D_C \subseteq TC$.
If $D$ (and hence $D_C$) is simple, i.e. $E_\red \coloneqq E_\Normal / D$
is smooth, we we obtain a surjective submersion 
$P \colon E_\Normal \to E_\red$ of vector bundles covering
a surjective submersion
$p \colon C \to C / D \eqqcolon M_\red$.

Generalizing this situation, we consider in the following 
what we call \emph{constraint vector bundles}, i.e. a vector bundle
$E_\Total \to M$, together with a linear submanifold
$I \colon E_\Normal \hookrightarrow E_\Total$
and a surjective submersion of vector bundles
$P \colon E_\Normal \twoheadrightarrow E_\red$
covering a surjective submersion $p \colon C \to M_\red$.
In other words, we consider a diagram of the following form:
\begin{equation}
\begin{tikzcd}\label{Diag:ConstVectBundle}
	E_\Total
		\arrow[d]
	& E_\Normal
		\arrow[l,hook',"I"']
		\arrow[d]
		\arrow[r,twoheadrightarrow, "P"]
	& E_\red
		\arrow[d]\\
	M 
	& C
		\arrow[l,hook',"i"']
		\arrow[r,twoheadrightarrow, "p"]
	& M_\red
\end{tikzcd}.
\end{equation}
We will often abbreviate a constraint vector bundle by writing
$E_\Total \hookleftarrow E_\Normal \twoheadrightarrow E_\red$.

\begin{remark}
	The above definition of constraint vector bundles is equivalent to the definition of constraint vector bundles
	as in \cite{dippell.kern:2025a}.
\end{remark}
We will, as usual, denote by $\vanishing_{E_\Normal}$ the vanishing ideal of functions in $\Cinfty(E_\Total)$
that vanish on $E_\Normal$.
Moreover, we call 
\begin{equation}
	\normalizer_{E_\Normal}
	\coloneqq \{ F \in \Cinfty(E_\Total)\ 
		\mid \exists \tilde{F} \in \Cinfty(E_\red) \mid
		I^*F = P^*\tilde{F} \}
\end{equation}	
the \emph{generalized Poisson normalizer}.
Note that, $\normalizer_{E_\Normal} \subseteq \Cinfty(E_\Total)$
is a subalgebra and
$\vanishing_{E_\Normal} \subseteq \Cinfty(E_\Total)$
is a two-sided ideal.

\begin{definition}[Projectable star product]
	\label{def:ProjectableStarProd}
	Let $E_\Total \hookleftarrow E_\Normal \twoheadrightarrow E_\red$ be a constraint vector bundle.
	A homogeneous star product $\star$ on $E_\Total$ is called
	\emph{projectable} if
	\begin{definitionlist}
		\item $\normalizer_{E_\Normal}\formal{\hbar}$ is a subalgebra of
		$\Cinfty(E_\Total)\formal{\hbar}$,
		\item $\vanishing_{E_\Normal}\formal{\hbar}$
		is a left ideal in $\Cinfty(E_\Total)\formal{\hbar}$, and
		\item $\vanishing_{E_\Normal}\formal{\hbar}$
		is a two-sided ideal in $\normalizer_{E_\Normal}\formal{\hbar}$.
	\end{definitionlist}	
\end{definition}
The aim of this chapter is now to construct and classify projectable star products.
We call these star products projectable, since 
they allow a reduction (or projection) in a canonical way:
\begin{equation}
	\Cinfty(E_\red)\formal{\hbar}
	\cong (\normalizer_{E_\Normal}/ \vanishing_{E_\Normal})\formal{\hbar}\cong \normalizer_{E_\Normal}\formal{\hbar}/ \vanishing_{E_\Normal}\formal{\hbar}
\end{equation}
Note that the star product $\star_\red$ on $E_\red$ obtained in this way is by construction homogeneous as well.
Moreover, 
\begin{align}
	\Cinfty(E_\Normal)\formal{\hbar}
	= 
	(\Cinfty(E_\Total)/\vanishing_{E_\Normal})\formal{\hbar}
	=
	\Cinfty(E_\Total)\formal{\hbar}/\vanishing_{E_\Normal}\formal{\hbar}
\end{align}
is canonically a 
$(\Cinfty(E_\Total)\formal{\hbar},\star)-(\Cinfty(E_\red)\formal{\hbar},\star_\red)-$bimodule, where 
the left action $L$ is given by left multiplication and the right action $R$ is given by right multiplication.  
This means we get in particular that 
\begin{equation}
\begin{split}
	L_{F\star G}(1)
	&=[F\star (G\star 1)]_N
	= [F\star G]_N=R_{[G]_\red}([F]_N)
	\\
	&= R_{[G]_\red}(R_{[F]_\red}(1))
	= R_{[F]_\red\star_\red [G]_\red} (1),
\end{split}
\end{equation}
for $F,G\in \normalizer_{E_\Normal}$,
where we denoted by $[\argument]_N$ the equivalence class in the quotient 
$\Cinfty(E_\Total)/\vanishing_{E_\Normal}$ and by $[\argument]_\red$ the equivalence class in 
$\normalizer_{E_\Normal}/\vanishing_{E_\Normal}$. Using again more geometric language, 
this means for 
$F,G\in \normalizer_{E_\Normal}$ such that $\tilde{F},\tilde{G}\in \Cinfty(E_\red)$ with 
$I^*F=P^*\tilde{F}$ and $I^*G=P^*\tilde{G}$, we have 
\begin{align}
	I^*(F\star G)=P^*(\tilde{F}\star_\red \tilde{G}).
\end{align}

\subsection{Representable Star Products}
\label{sec:PresentableStarProd}

Before considering the problem of understanding general projectable star products,
we will in this section consider star products which are compatible only with a linear submanifold, in the sense that we can find a representation on the functions on the submanifold by differential operators.
Understanding this special case will be a crucial step to constructing and classifying projectable star products in \autoref{sec:ClassificationProjStarProd}.

\begin{definition}[Representable star product]
	\label{def:RepresentableStarProd}
A homogeneous star product
$\star = \sum_{r=0}^\infty \hbar^r C_r$ on $E_\Total$ is 
called \emph{representable} on a linear submanifold
$I \colon E_\Normal \hookrightarrow E_\Total$, 
if there exists a $\ComplexNum\formal{\hbar}$-linear differential operator
$\rho = \sum_{r=0}^{\infty} \hbar^r\rho_r \colon 
\Cinfty(E_\Total)\formal{\hbar} \to \Diffop(E_\Normal)\formal{\hbar}$,
with $\rho_r$ of homogeneous degree $-r$,
such that 
\begin{definitionlist}
	\item \label{def:RepresentableStarProd_1}
		$\rho_0(F)(G) = (I^*F) \cdot G$ for all
		$F\in \Cinfty(E_\Total)$ and $G \in \Cinfty(E_\Normal)$ and 
	\item \label{def:RepresentableStarProd_2}
		$\rho(F \star G) = \rho(F) \circ \rho(G)$
		for all
		$F, G \in \Cinfty(E_\Total)$.
\end{definitionlist} 
If \ref{def:RepresentableStarProd_2} only holds up to order $n \in \Naturals_0$ in $\hbar$ we call $\star$ \emph{representable up to order $n$}.  
\end{definition}
Note that if $\star$ is representable with representation $\rho$,
and $S$ is a homogeneous equivalence, then
$\tilde{\star}$ given by
$F \mathbin{\tilde{\star}} G
= S^{-1}\bigl(S(F) \star S(G)\bigr)$
is representable as well with representation
$\rho' = \rho \circ S$.
Moreover, every homogeneous star product is representable up to order $0$ by using
$\rho_0$ as defined in \autoref{def:RepresentableStarProd}~\ref{def:RepresentableStarProd_1}.

To study the existence of such homogeneous representations consider
$\Diffop(E_\Normal)$ as a $\Cinfty(E_\Total)$-bimodule.
Then we obtain the Hochschild complex
$\HC^\bullet\bigl(E_\Total,\Diffop(E_\Normal)\bigr)$,
see \autoref{sec:RepresentableHKR} for details.
Then direct computations yield the following result about the
extension of representations.

\begin{lemma}
	\label{lem:ExtendingReps}
	Let $\star = \sum_{r = 0}^\infty \hbar^r C_r$
	be a homogeneous star product on $E_\Total$ and let
	$I \colon E_\Normal \hookrightarrow E_\Total$
	be a linear submanifold.
	Additionally, assume that $\star$ is representable up to order
	$n \in \Naturals_0$ on $E_\Normal$ via 
	$\rho = \sum_{r=0}^{n} \hbar^r \rho_r$.
	\begin{lemmalist}
		\item \label{lem:ExtendingReps_1}
			The cochain $D_{n+1} \in \HC^2\bigl(E_\Total, \Diffop(E_\Normal)\bigr)$
			defined by
			\begin{equation}
				D_{n+1}(F,G) \coloneqq 
				\sum_{i=0}^{n} (\rho_i\circ C_{n+1-i})(F,G)
				-
				\sum_{i=1}^n \rho_{i}(F)\circ \rho_{n+1-i}(G),
			\end{equation}
			for $F,G \in \Cinfty(E_\Total)$,
			is closed.
		\item \label{lem:ExtendingReps_2}
			The cocycle $D_{n+1}$ is exact with $D_{n+1} = \del \rho_{n+1}$,
			if and only if $\star$ is representable up to order
			$n+1$ via $\rho' \coloneqq \rho + \hbar^{n+1} \rho_{n+1}$.
	\end{lemmalist}
\end{lemma}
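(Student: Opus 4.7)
The plan is to interpret $D_{n+1}$ as the obstruction cocycle of standard deformation theory, now for the representation $\rho$ rather than for the star product itself. The ambient complex is the bimodule Hochschild complex $\HC^\bullet(E_\Total, \Diffop(E_\Normal))$, where the $\Cinfty(E_\Total)$-bimodule structure on $\Diffop(E_\Normal)$ is given by $F \cdot D \coloneqq \rho_0(F) \circ D$ and $D \cdot F \coloneqq D \circ \rho_0(F)$; this is precisely the structure forced on us by \autoref{def:RepresentableStarProd}~\ref{def:RepresentableStarProd_1}. With this convention the Hochschild differential on a $0$-cochain $\sigma$ reads
\begin{equation*}
	(\del \sigma)(F,G) = \rho_0(F) \circ \sigma(G) - \sigma(FG) + \sigma(F) \circ \rho_0(G),
\end{equation*}
and on a $2$-cochain $D$ it reads
\begin{equation*}
	(\del D)(F,G,H) = \rho_0(F) \circ D(G,H) - D(FG,H) + D(F,GH) - D(F,G) \circ \rho_0(H).
\end{equation*}

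For part~\ref{lem:ExtendingReps_2} I would expand the identity $\rho'(F \star G) = \rho'(F) \circ \rho'(G)$ for $\rho' \coloneqq \rho + \hbar^{n+1}\rho_{n+1}$ at order $\hbar^{n+1}$. Using $C_0 = \mu_0$, the three terms containing $\rho_{n+1}$ regroup into $(\del \rho_{n+1})(F,G)$ via the above bimodule action, while the remaining terms are exactly $D_{n+1}(F,G)$. Hence representability up to order $n+1$ is equivalent to $\del \rho_{n+1} = D_{n+1}$.

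For part~\ref{lem:ExtendingReps_1} I would apply the classical associativity trick. Set $R(F,G) \coloneqq \rho(F \star G) - \rho(F) \circ \rho(G)$, which by hypothesis satisfies $R = \hbar^{n+1}D_{n+1} + \mathcal{O}(\hbar^{n+2})$. Associativity of $\star$ together with associativity of composition gives $\rho(F \star (G \star H)) = \rho((F \star G) \star H)$; expanding both sides via the definition of $R$ yields the cocycle-type identity
\begin{equation*}
	\rho(F) \circ R(G,H) - R(F \star G, H) + R(F, G \star H) - R(F,G) \circ \rho(H) = 0.
\end{equation*}
Extracting the coefficient of $\hbar^{n+1}$ and noting that only the leading terms $\star = \mu_0 + \mathcal{O}(\hbar)$ and $\rho = \rho_0 + \mathcal{O}(\hbar)$ contribute (since $R$ already begins in order $n+1$), the identity reduces to exactly $(\del D_{n+1})(F,G,H) = 0$.

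The main obstacle is merely careful bookkeeping: collecting the correct terms in the order-by-order expansions and matching them against the bimodule Hochschild differential. Homogeneity of $D_{n+1}$ as an operator of total degree $-(n+1)$, which is needed for $\rho_{n+1}$ to be a valid candidate, is automatic since within each summand the degrees of $\rho_i$ (degree $-i$) and $C_{n+1-i}$ (degree $-(n+1-i)$), and likewise of the two factors $\rho_i(F) \circ \rho_{n+1-i}(G)$, add up to $-(n+1)$.
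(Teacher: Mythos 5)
Your proof is correct and is essentially the argument the paper has in mind: the paper leaves this lemma to ``direct computations'', and your order-by-order expansion of $\rho'(F\star G)=\rho'(F)\circ\rho'(G)$ for part (ii), together with the associativity/remainder trick $R(F,G)=\rho(F\star G)-\rho(F)\circ\rho(G)$ for part (i), is exactly that computation, with your bimodule differential matching the one in \eqref{eq:RepresentableDifferential}. Nothing is missing.
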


\begin{proposition}
	\label{prop:RepOrdbyOrd}
Let $\star = \sum_{r = 0}^\infty \hbar^r C_r$
be a homogeneous star product on $E_\Total$ and let
$I \colon E_\Normal \hookrightarrow E_\Total$
be a linear submanifold.
\begin{propositionlist}
	\item \label{prop:RepOrdbyOrd_1}
	The star product $\star$ is representable up to order $1$ on $E_\Normal$ if and only if $E_\Normal$ is coisotropic.
	In this case we have
	$\rho_1(F)(I^*G) = I^*C_1(F,G)$.
\end{propositionlist}
Assume now that $\star$ is representable up to order $n \in \Naturals$ on $E_\Normal$ via 
$\rho = \sum_{\ell=0}^{\infty} \hbar^\ell \rho_\ell$.
\begin{propositionlist}[resume]
	\item \label{prop:RepOrdbyOrd_2}
	If $n=1$ it holds
	\begin{equation} \label{eq:RepresentableOrder1Condition}
		I^*C^-_{2}(F,G)
		+ \rho_1(\{F,G\}_\KKS)(1)
		- \bigl(\rho_1(F)\circ \rho_1(G)\bigr)(1)
		- \bigl(\rho_1(G)\circ\rho_1(F)\bigr)(1)
		=0
	\end{equation}	
	for all $F, G \in \vanishing_{E_\Normal}$,
	if and only if we can change $\rho$ in order $2$,
	such that $\star$ becomes representable up to order $2$.
    \item \label{prop:RepOrdbyOrd_3}
	If $n \geq 2$, then we can change $\rho$ in order $n+1$, 
    such that $\star$ becomes representable up to order $n+1$.
\end{propositionlist}
\end{proposition}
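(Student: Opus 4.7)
My overall strategy is to invoke \autoref{lem:ExtendingReps}, which reduces each step from order $n$ to order $n+1$ to the question of whether the closed cochain $D_{n+1}\in\HC^2\bigl(E_\Total,\Diffop(E_\Normal)\bigr)$ is a Hochschild coboundary. Parts \ref{prop:RepOrdbyOrd_2} and \ref{prop:RepOrdbyOrd_3} then become cohomological computations governed by the HKR-type theorem for the bimodule $\Diffop(E_\Normal)$ developed in \autoref{sec:RepresentableHKR}, together with the homogeneity constraint inherited from $\star$. Part \ref{prop:RepOrdbyOrd_1} is handled separately by a direct computation with the order-$\hbar$ equation.

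For \ref{prop:RepOrdbyOrd_1} the starting point is the order-$\hbar$ coefficient of $\rho(F\star G)=\rho(F)\circ\rho(G)$, which applied to $\phi\in\Cinfty(E_\Normal)$ reads
\begin{equation*}
I^*C_1(F,G)\cdot\phi + \rho_1(FG)(\phi) = I^*F\cdot\rho_1(G)(\phi) + \rho_1(F)\bigl((I^*G)\cdot\phi\bigr).
\end{equation*}
For the necessity direction I would restrict to $F,G\in\vanishing_{E_\Normal}$, so that the $\rho_0$ contributions disappear and the equation collapses to $\rho_1(FG)(\phi) = -I^*C_1(F,G)\cdot\phi$; swapping $F$ and $G$ and using $FG=GF$ forces $I^*C_1^-(F,G)=0$, and since $C_1^- = \I\{\argument,\argument\}_\KKS$ this is exactly the coisotropy of $E_\Normal$. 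Evaluating the same equation at $\phi=1$ under the standing normalization $\rho_r(F)(1)=0$ for $r\geq 1$ yields the formula $\rho_1(F)(I^*G) = I^*C_1(F,G)$. For sufficiency, after normalizing $\star$ to $C_1=\tfrac{\I}{2}\{\argument,\argument\}_\KKS$ via \autoref{lem:EquivHomoStarProducts}, this formula is promoted to a well-defined differential operator on $\Cinfty(E_\Normal)$ by choosing a homogeneous splitting $\sigma\colon\Cinfty(E_\Normal)\to\Cinfty(E_\Total)$ of $I^*$ and setting $\rho_1(F)(\phi)\coloneqq I^*C_1(F,\sigma\phi)$; the order-$\hbar$ representability identity then follows from $\del C_1=0$ together with coisotropy, which control the failure of $\sigma$ to be an algebra map.

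For \ref{prop:RepOrdbyOrd_3}, by homogeneity the obstruction $D_{n+1}$ lives in degree $-(n+1)\leq -3$. The HKR theorem for $\Diffop(E_\Normal)$ identifies the relevant Hochschild cohomology in degree two with bivector-field-like representatives along $E_\Normal$, which by degree counting vanish in homogeneity $\leq -3$. Hence $D_{n+1}$ is automatically a coboundary, and I can solve $D_{n+1}=\del\rho_{n+1}$ with $\rho_{n+1}$ homogeneous of degree $-(n+1)$. For \ref{prop:RepOrdbyOrd_2} the degree is exactly $-2$ and the HKR representative of $D_2$ need not vanish. I would compute it by antisymmetrizing $D_2(F,G)$ and evaluating on the constant $\mathbf{1}$, taking $F,G\in\vanishing_{E_\Normal}$ so that all $\rho_0$ contributions drop out. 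What remains is precisely the left-hand side of \eqref{eq:RepresentableOrder1Condition}, so $D_2$ is exact exactly when that combination vanishes. The main technical hurdle is the HKR step: extending the homogeneous machinery of \autoref{prop:HomoHKRTheorem} to the bimodule $\Diffop(E_\Normal)$, verifying that the homotopy preserves homogeneous degrees, and confirming that the obstruction at degree $-2$ is captured exactly by the displayed antisymmetric combination.
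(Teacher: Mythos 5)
Your handling of parts \ref{prop:RepOrdbyOrd_2} and \ref{prop:RepOrdbyOrd_3}, and the necessity half of \ref{prop:RepOrdbyOrd_1}, is essentially the paper's argument: reduce everything to exactness of the obstruction $D_{n+1}$ via \autoref{lem:ExtendingReps}, use the representable HKR retract (which is already supplied, with homogeneity preservation, in \autoref{sec:RepresentableHKR}, so it is not an open ``technical hurdle''), identify $\hkrInv(D_2)$ with the antisymmetrized evaluation $D_2(F,G)(1)-D_2(G,F)(1)$ on $F,G\in\vanishing_{E_\Normal}$, and kill $\hkrInv(D_{n+1})$ for $n\geq 2$ by counting homogeneity degrees of sections of $\Anti^2\nu(E_\Total,E_\Normal)$. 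Your direct swap-$F,G$ argument for ``representable up to order $1$ $\Rightarrow$ coisotropic'' is a fine elementary replacement for the paper's version of that implication.

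The genuine gap is the sufficiency direction of \ref{prop:RepOrdbyOrd_1}. Your candidate $\rho_1(F)(\phi)\coloneqq I^*C_1(F,\sigma\phi)$ does not satisfy the order-one identity: using $\del C_1=0$ one finds that the defect of the order-$\hbar$ equation is exactly
\begin{equation*}
	\rho_1(F)(I^*G\cdot\phi)+I^*F\cdot\rho_1(G)(\phi)-\rho_1(FG)(\phi)-I^*C_1(F,G)\cdot\phi
	= I^*C_1\bigl(F,\,\sigma(I^*G\cdot\phi)-G\,\sigma\phi\bigr),
\end{equation*}
and while the second entry on the right lies in $\vanishing_{E_\Normal}$, the first entry $F$ is arbitrary. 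Coisotropy only gives $I^*\{F,H\}_\KKS=0$ when \emph{both} $F$ and $H$ vanish on $E_\Normal$, so it does not control this term. Concretely, take the Weyl--Moyal product on $E_\Total=T^*\Reals^2$ and the coisotropic subbundle $E_\Normal=\{p_2=0\}$: evaluating the order-one identity at $(F,G,\phi)=(q^2,p_2,1)$ forces $I^*C_1(q^2,p_2)=I^*q^2\cdot\rho_1(p_2)(1)-\rho_1(q^2p_2)(1)+\rho_1(q^2)(0)$, whose right-hand side vanishes for your $\rho_1$ (indeed for \emph{any} $\rho_1$ with $\rho_1(\argument)(1)=0$), while the left-hand side is $\tfrac{\I}{2}$. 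The same example shows your ``standing normalization $\rho_r(F)(1)=0$'' is not only absent from \autoref{def:RepresentableStarProd} but unattainable in general, so the way you derive the displayed formula for $\rho_1$ also does not stand. The existence statement really needs the paper's route: set $\rho_1\coloneqq\hkrHomo(D_1)$, where coisotropy enters precisely through $\hkrInv(D_1)=0$ (via the evaluation formula at the end of \autoref{sec:RepresentableHKR}) and homogeneity of $\rho_1$ comes from the degree preservation in \autoref{thm:RepresentableHKR}; the correct $\rho_1$ carries correction terms beyond $I^*C_1(F,\sigma\phi)$ and admits no closed formula of the type you propose.
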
  

\begin{proof}
In all three cases we show that $\hkrInv(D_{n+1}) = 0$.
Then $D_{n+1}$ is exact by \autoref{thm:RepresentableHKR}
and the representation can be extended to order $n+1$ by \autoref{lem:ExtendingReps}.
For the first part, note that
$\hkrInv(D_1)(\D F\at{E_\Normal}, \D G\at{E_\Normal}) 
= I^*\{F,G\}_\KKS$
for all $F,G \in \vanishing_{E_\Normal}$, see \autoref{sec:RepresentableHKR},
and thus $\hkrInv(D_1) = 0$ if and only if $E_\Normal$ is coisotropic.
In the case $n=1$ the condition \eqref{eq:RepresentableOrder1Condition}
directly ensures that $\hkrInv(D_2) = 0$.
Moreover, if $\star$ is representable up to order $2$,
we know from \autoref{lem:ExtendingReps}~\ref{lem:ExtendingReps_2} that $D_2 = \partial \rho_2$.
Thus we get $\hkrInv(D_2) = 0$, which is exactly \eqref{eq:RepresentableOrder1Condition}
by the definition of $\partial$, see \ref{eq:RepresentableDifferential}.
Finally, $n \geq 2$ the homogeneity of $\star$ yields
$\hkrInv(D_{n+1}) = 0$.
Thus we get the claim.
\end{proof}

Thus when searching for representable star products we can always assume that $E_\Normal$ is already coisotropic.
Then \ref{prop:RepOrdbyOrd_3} of the preceding Proposition directly yields the following result.

\begin{corollary}\label{Cor: RepOrd2}
Let $\star$ be a homogeneous star product on $E_\Total$ and let
$I \colon E_\Normal \hookrightarrow E_\Total$ be a 
linear coisotropic submanifold. 
Additionally, let $\star$ be representable up to order
$n\geq 2$ on $E_\Normal$, then $\star$ is representable. 
\end{corollary}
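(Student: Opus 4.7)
The plan is to prove the corollary by a straightforward induction on the order, using \autoref{prop:RepOrdbyOrd}~\ref{prop:RepOrdbyOrd_3} as the inductive step. The homogeneity condition has already done the hard work in that proposition: once we are past order $1$, the obstruction $\hkrInv(D_{n+1})$ lives in a space of multivector fields on $E_\Normal$ of a homogeneity degree that forces it to vanish, so the cocycle $D_{n+1}$ is automatically exact by the representable HKR theorem (\autoref{thm:RepresentableHKR}).

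Concretely, I would argue as follows. Start with the given representation $\rho^{(n)} = \sum_{\ell=0}^{n} \hbar^\ell \rho_\ell$ representing $\star$ up to order $n \geq 2$. By \autoref{prop:RepOrdbyOrd}~\ref{prop:RepOrdbyOrd_3}, there exists $\rho_{n+1} \in \HC^1(E_\Total,\Diffop(E_\Normal))$ of homogeneous degree $-(n+1)$ such that $\rho^{(n+1)} \coloneqq \rho^{(n)} + \hbar^{n+1}\rho_{n+1}$ represents $\star$ up to order $n+1$. Crucially, the lower-order components of $\rho^{(n+1)}$ coincide with those of $\rho^{(n)}$, so the construction is consistent. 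Iterating this procedure yields, for every $k \geq n$, a representation $\rho^{(k)}$ of $\star$ up to order $k$ whose truncation at any lower order $m \geq n$ equals $\rho^{(m)}$.

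Assembling these in the $\hbar$-adic topology, set
\begin{equation}
    \rho \coloneqq \sum_{\ell=0}^{\infty} \hbar^\ell \rho_\ell.
\end{equation}
By construction each $\rho_\ell$ is a differential operator of homogeneous degree $-\ell$, and the identity $\rho(F \star G) = \rho(F) \circ \rho(G)$ holds to all orders in $\hbar$ since it holds modulo $\hbar^{k+1}$ for every $k$. Hence $\star$ is representable on $E_\Normal$.

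There is really no obstacle here; the content of the corollary is that the inductive step of \autoref{prop:RepOrdbyOrd}~\ref{prop:RepOrdbyOrd_3} is unconditional once $n \geq 2$, so that no new cohomological obstruction can appear and the order-by-order construction never fails. The initial hypothesis $n \geq 2$ is precisely what bypasses the anomalous cases of part \ref{prop:RepOrdbyOrd_1} (the coisotropy condition at order $1$) and part \ref{prop:RepOrdbyOrd_2} (the extra condition \eqref{eq:RepresentableOrder1Condition} at order $2$).
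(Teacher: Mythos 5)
Your proposal is correct and is essentially the paper's argument: the paper simply states that the corollary follows directly from \autoref{prop:RepOrdbyOrd}~\ref{prop:RepOrdbyOrd_3}, and you have spelled out the implicit induction and $\hbar$-adic assembly, including the point that each step only modifies the representation in the new top order. No gaps.
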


Recall that a homogeneous star product on $E_\Total$ induces the structure of a Lie algebroid on $A_\Total$
where $A_\Total^* = E_\Total$, see \autoref{lem:StarProdInducesLinearPoisson}.
A linear submanifold $E_\Normal\to E_\Total$ induces now a subbundle
\begin{equation}
	A_\Normal = \{ a\in i^\sharp A_\Total \mid \alpha(a) = 0 \text{ for all } \alpha\in E_\Normal\}
\end{equation}
of $A_\Total$ over $C\hookrightarrow M$
such that $E_\Normal = \Ann(A_\Normal)$.
If $E_\Normal$ is coisotropic, one can check fairly easily that
$I \colon A_\Normal\to A_\Total$
is a subalgebroid. 
Note that we obtain an induced map
$I^* \colon \Cohom(A_\Total) \to \Cohom(A_\Normal)$
between the respective Lie algebroid cohomologies.

\begin{lemma}
	\label{lem:adptCan-Form} 
	Let $A_\Total$ be a Lie algebroid with Lie subalgebroid
	$I \colon A_\Normal \hookrightarrow A_\Total$
	over a submanifold $i \colon C \hookrightarrow M$.
	Every homogeneous star product
	$\star$
	on $A_\Total^*$ 
	is homogeneously equivalent to a star product
	$\star' = \sum_{r = 0}^{\infty} \hbar^r C'_r$
	satisfying
	\begin{equation}
		I^*C'^-_2\bigl(\Jmap(s),\Jmap(t)\bigr)
		= \bigl(I^*\tilde\Phi(\star)\bigr)
		\bigl(i^\sharp s, i^\sharp t\bigr)
	\end{equation}
	for all $s,t \in \Secinfty(A_\Total)$
	with $i^\sharp s, i^\sharp t \in \Secinfty(A_\Normal)$
	and
	\begin{equation}
		\label{eq:lem:adptCan-Form}
		I^*C_1(\Jmap(s), \pr^*f) = 0,
	\end{equation}
	for all $s \in \Secinfty(A_\Total)$
	and $f \in \vanishing_C$.
\end{lemma}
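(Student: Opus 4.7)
The plan is to construct $\star'$ by composing a chain of homogeneous equivalences, each one tailored to one of the two required identities, and then to extend the resulting equivalence up to order $2$ to a full one via \autoref{prop:EquivHomStar}.

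First, I apply \autoref{lem:EquivHomoStarProducts} to pass to a homogeneously equivalent star product whose first-order term is already $\tfrac{\I}{2}\{\argument,\argument\}_\KKS$. Under this normalization the second identity becomes $\pr^*i^*\bigl(\rho_{A_\Total}(s)(f)\bigr) = 0$; this is automatic whenever $i^\sharp s \in \Secinfty(A_\Normal)$ but fails in general for $s \in \Secinfty(A_\Total)$, since only the transverse component of $\rho_{A_\Total}(s)$ along $C$ survives the restriction. I would absorb this discrepancy by a further homogeneous equivalence $S = \id + \hbar S_1$, where $S_1$ is a differential operator of homogeneous degree $-1$ chosen so that its symmetric coboundary $\partial S_1$ cancels the transverse anchor term after pullback along $I$. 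Concretely $S_1$ is built from an $A_\Total$-connection adapted to a splitting $A_\Total|_C = A_\Normal \oplus N$ (existing by averaging) together with a tubular-neighborhood retraction, so that only the normal $1$-jet of $f$ enters the construction. Since $\partial S_1$ is automatically symmetric, the antisymmetric part $C'^-_1 = \I\{\argument,\argument\}_\KKS$ is untouched, so the induced Poisson structure is preserved.

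For the first identity I observe that $C'^-_2(\Jmap(s),\Jmap(t))$ is homogeneous of degree $0$ and therefore equals $\pr^*\tilde\Phi(\star')(s,t)$. Applying $I^*$ yields $\pr^*(I^*\tilde\Phi(\star'))(i^\sharp s, i^\sharp t)$ whenever $i^\sharp s, i^\sharp t \in \Secinfty(A_\Normal)$, so it suffices to arrange $\tilde\Phi(\star') = \tilde\Phi(\star)$ as $2$-forms, not merely as cohomology classes. Since $\star'$ and $\star$ are homogeneously equivalent, their characteristic $2$-forms differ by a $\D_{A_\Total}$-exact form $\D_{A_\Total}\gamma$ (this uses the well-definedness of the class in \autoref{prop:relativeClass}). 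A final homogeneous equivalence $T = \id - \hbar \Lie_{\gamma^\ver}$ then shifts $\tilde\Phi$ by exactly $-\D_{A_\Total}\gamma$ (by the computation in the proof of \autoref{lem:RelativeClassEquivalence}), killing the discrepancy. Because $\Lie_{\gamma^\ver}$ is a pointwise derivation, $T$ leaves $C'_1$ unchanged and therefore does not disturb the second identity established above.

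The main obstacle is the construction of $S_1$ in the first step: producing a globally defined, homogeneous differential operator of degree $-1$ whose symmetric coboundary exactly cancels the transverse anchor term along $E_\Normal$. The key technical inputs are an $A_\Total$-connection compatible with the subalgebroid $A_\Normal$ and the homogeneous HKR deformation retract of \autoref{prop:HomoHKRTheorem}, which together guarantee that local candidates for $S_1$ can be assembled globally while preserving the Euler grading. Once $S_1$ is in hand, the remainder of the argument follows the same pattern as the proofs of \autoref{lem:EquivHomoStarProducts} and \autoref{lem:RelativeClassEquivalence}, combined with \autoref{prop:EquivHomStar} to reach all orders.
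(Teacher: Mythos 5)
Your overall strategy points in the same direction as the paper (normalize the first order via \autoref{lem:EquivHomoStarProducts}, then twist by a homogeneous degree $-1$ operator to kill the transverse anchor term), but there are two genuine gaps. First, the operator you call $S_1$ is never actually constructed; you yourself flag it as ``the main obstacle'', and the appeal to the homogeneous HKR retract to ``assemble local candidates globally'' does not do this job. The paper writes it down explicitly: choosing a complement $K$ with $i^\sharp A_\Total \simeq A_\Normal \oplus K$, an $A_\Normal$-adapted $A_\Total$-connection $\nabla$, and an adapted dual frame $(e_i,f_j,e^i,f^j)$, one takes $D^\nabla = \Lie_{(e^i)^\ver}\circ\Lie_{(e_i)^\hor} - \Lie_{(f^j)^\ver}\circ\Lie_{(f_j)^\hor}$ and $S=\id-\tfrac{\I\hbar}{2}D^\nabla$; the condition \eqref{eq:lem:adptCan-Form} then follows because the anchors $\rho(e_i)$ are tangent to $C$.

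Second, and more seriously, your treatment of the first identity does not go through as argued. After the $S_1$-twist the first orders of $\star$ and $\star'$ differ (by the symmetric term $\del S_1$), so the relative-form machinery of \autoref{lem:RelativeForm} and \autoref{prop:relativeClass} — which requires equal first orders — does not apply, and the quantity $C_2'^-\bigl(\Jmap(s),\Jmap(t)\bigr)$ is not a priori of the form $\pr^*$ of a closed $2$-form in the class of $\tilde\Phi(\star)$. Concretely, the twist by a non-derivation operator such as $D^\nabla$ \emph{does} change the antisymmetric second order on linear functions: the paper computes that along $E_\Normal$ the extra contribution is $\tfrac{1}{2}I^*\tr(R^\nabla) - \tr(R^{\nabla^\Normal})$, and this is removed not by a further equivalence but by choosing the adapted connection with $\tr(R^\nabla)=0$ and $\tr(R^{\nabla^\Normal})=0$ (possible via a fibre metric). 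Your plan treats the two conditions as independently fixable by successive twists, with the exactness of the discrepancy asserted rather than proved; the crux of the paper's proof is precisely the simultaneous control of both orders under a single twist. (A minor point: the final appeal to \autoref{prop:EquivHomStar} is unnecessary — your $\star'$ is obtained from $\star$ by composing genuine equivalences, so nothing needs to be extended order by order.)
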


\begin{proof}
By \autoref{lem:EquivHomoStarProducts} we can assume that
$C_1 = \frac{\I}{2} \{\argument,\argument\}_\KKS$.
Let us choose a complementary vector bundle $K \to C$
to $A_\Normal$
such that
$i^\#A_\Total \simeq A_N \oplus K$.
Let us moreover choose an $A_\Normal$-adapted Lie algebroid connection $\nabla$, i.e. 
a Lie algebroid connection $\nabla$ for $A_\Total$, such that for all
$s,t \in \Secinfty(A_\Total)$
with $i^\sharp s, i^\sharp t \in \Secinfty(A_\Normal)$
it holds
$i^\sharp(\nabla_s t) \in \Secinfty(A_\Normal)$.
Thus it induces a Lie algebroid connection $\nabla^\Normal$ 
on $A_\Normal$.
Note that by \autoref{sec:LieAlgebroids} the connection $\nabla$ can always be chosen 
such that $\tr(R^\nabla) = 0$ and $\tr(R^{\nabla^\Normal}) = 0$.
Additionally, we choose an adapted dual basis of $\Secinfty(A_\Total)$
consisting of
$e_i, f_j \in \Secinfty(A_\Total)$
and 
$e^i, f^j \in \Secinfty(A^*_\Total)$
such that
\begin{align*}
	i^\sharp e_i &\in \Secinfty\bigl(A_\Normal\bigr),
	& i^\sharp f_j &\in \Secinfty(K),
	\\
	i^\sharp e^i &\in \Secinfty\bigl(\Ann(A_\Normal)\bigr),
	& i^\sharp f^j &\in \Secinfty\bigl(\Ann(K)\bigr).
\end{align*}
Let us define the differential operator
$D^\nabla \colon \Cinfty(A_\Total^*) \to \Cinfty(A_\Total^*)$
of homogeneous degree $-1$ by 
\begin{align*}
	D^\nabla 
	\coloneqq \Lie_{(e^i)^\ver} \circ \Lie_{(e_i)^\hor} 
	- \Lie_{(f^j)^\ver} \circ \Lie_{(f_j)^\hor}.
\end{align*}
With this we can define
$S \coloneqq \id - \frac{\I\hbar}{2} D^\nabla$
and us it to define a new star product $\star'$
by
$F \star' G \coloneqq S^{-1}\bigl(S(F) \star S(G)\bigr)$.
Let now $s,t\in \Secinfty(A_\Total)$ be such 
that $i^\sharp s, i^\sharp t\in \Secinfty(A_\Normal)$, then we get 
\begin{align*}
	I^*\bigl(C_2^{'-}(\Jmap(s),\Jmap(t))\bigr)
	&= 
	I^*\bigl(C_2^-(\Jmap(s),\Jmap(t)) 
	+
	\frac{1}{2}D^\nabla\{\Jmap(s),\Jmap(t)\}_\KKS
	\\
	&\phantom{=}
	-
	\frac{1}{2}\{D^\nabla(\Jmap(s)),\Jmap(t)\}_\KKS
	-
	\frac{1}{2}\{\Jmap(s),D^\nabla(\Jmap(t))\}_\KKS\bigr)
	\\
	&= I^*\tilde\Phi(\star)(i^\sharp s, i^\sharp t)
		+\frac{1}{2}I^*\mathrm{tr} (R^\nabla)(i^\sharp s, i^\sharp t)
		-\mathrm{tr} (R^{\nabla^{N}})(i^\sharp s, i^\sharp t)\\
	&= I^*\tilde\Phi(\star)(i^\sharp s, i^\sharp t).
\end{align*}
Moreover, for the first order we obtain
\begin{align*}
	C'_1(\Jmap(s),\pr^*f)
	&= \frac{\I}{2} \bigl( \{\Jmap(s),\pr^*f \}_\KKS 
	+ \partial D^\nabla(\Jmap(s),\pr^*f) \bigr)
	\\
	&= \frac{\I}{2} \bigl( \{\Jmap(s),\pr^*f \}_\KKS
	+ e^i(s) \rho(e_i)(f) - f^j(s) \rho(f_j)(f) \bigr)
	\\
	&= \I e^i(s) \rho(e_i)(f),
\end{align*}
and thus $C'_1(\Jmap(s),\pr^*f) = 0$
for $f \in \vanishing_C$, since the vector fields $\rho(e_i)$ are tangential to $C$.
\end{proof}

This allows us to state and prove the following 

\begin{theorem}\label{Thm: Pull-backcharclass}
Let $A_\Total$ be a Lie algebroid
and $I \colon A_\Normal \hookrightarrow A_\Total$ a Lie subalgebroid.
Moreover, let $\star$ be a homogeneous star product on $A_\Total^*$
with characteristic class 
$\Phi(\star) \in \Cohom^2(A_\Total)$.
The star product $\star$ is representable on $\Ann(A_\Normal)$
if and only if $I^*\Phi(\star) = 0 \in \Cohom^2(A_\Normal)$. 
\end{theorem}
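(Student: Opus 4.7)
The plan is to reduce the theorem to the order-$2$ obstruction for extending a representation, Equation~\eqref{eq:RepresentableOrder1Condition}, and analyze it using Lemma~\ref{lem:adptCan-Form}. Since $A_\Normal\hookrightarrow A_\Total$ is a Lie subalgebroid, $\Ann(A_\Normal)\subseteq A_\Total^*$ is linearly coisotropic, so Proposition~\ref{prop:RepOrdbyOrd}~\ref{prop:RepOrdbyOrd_1} yields representability up to order~$1$ automatically, and by Corollary~\ref{Cor: RepOrd2} $\star$ is representable precisely when the representation extends to order~$2$. Because $\rho_1(H)(1)=I^*C_1(H,1)=0$ for every $H$, Equation~\eqref{eq:RepresentableOrder1Condition} collapses to
\[
I^*C_2^-(F,G)=0 \qquad \text{for all } F,G\in\vanishing_{\Ann(A_\Normal)}.
\]
By the representable HKR theorem (Theorem~\ref{thm:RepresentableHKR}), this vanishing is equivalent to the vanishing of the tensorial symbol $\hkrInv(D_2)$ on conormal $1$-forms along $\Ann(A_\Normal)$, so it suffices to check it on pairs from the generating set $\{\Jmap(s):i^\sharp s\in\Secinfty(A_\Normal)\}\cup\{\pr^*f:f\in\vanishing_C\}$ of the ideal $\vanishing_{\Ann(A_\Normal)}$.

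Putting $\star$ in adapted form via Lemma~\ref{lem:adptCan-Form}, homogeneity handles two of the three generator pair types automatically: $C_2^-(\pr^*f,\pr^*g)$ and $C_2^-(\Jmap(s),\pr^*f)$ are homogeneous of degree $-2$ and $-1$ respectively, hence vanish identically. The remaining type is governed by the adapted form identity
\[
I^*C_2^-\bigl(\Jmap(s),\Jmap(t)\bigr) = \bigl(I^*\tilde\Phi(\star)\bigr)(i^\sharp s, i^\sharp t)
\]
for $i^\sharp s, i^\sharp t \in \Secinfty(A_\Normal)$. If $\star$ is representable, the left-hand side vanishes for all such $s,t$; since every section of $A_\Normal$ extends to a section of $A_\Total$, this forces $I^*\tilde\Phi(\star)=0$ as a form on $A_\Normal$, whence $I^*\Phi(\star)=0$ in cohomology. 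This proves the forward direction.

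For the converse, assume $I^*\Phi(\star)=0$. Write $I^*\tilde\Phi(\star)=\D_{A_\Normal}\alpha$ and extend $\alpha$ to $\tilde\alpha\in\Secinfty(A_\Total^*)$ with $I^*\tilde\alpha=\alpha$. Applying the homogeneous equivalence $S=\id-\hbar\Lie_{\tilde\alpha^\ver}$ as in the proof of Lemma~\ref{lem:RelativeClassEquivalence} produces an equivalent star product $\star''$ satisfying $\tilde\Phi(\star'')=\tilde\Phi(\star)-\D_{A_\Total}\tilde\alpha$, so $I^*\tilde\Phi(\star'')=0$ as a form. Putting $\star''$ in adapted form, the generator-pair analysis above now shows $I^*C_2^-$ vanishes on all three generator types, hence on all of $\vanishing_{\Ann(A_\Normal)}$ by tensoriality. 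Proposition~\ref{prop:RepOrdbyOrd}~\ref{prop:RepOrdbyOrd_2} then extends the representation to order~$2$, and Corollary~\ref{Cor: RepOrd2} extends it to all orders.

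The main technical obstacle is the passage from vanishing of $I^*C_2^-$ on ideal generators to vanishing on all of $\vanishing_{\Ann(A_\Normal)}$; this rests on the tensorial nature of the order-$2$ obstruction $\hkrInv(D_2)$ supplied by the representable HKR theorem, which is also the key input behind Proposition~\ref{prop:RepOrdbyOrd}~\ref{prop:RepOrdbyOrd_2}.
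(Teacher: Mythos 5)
Your converse direction (class vanishes $\Rightarrow$ representable) is essentially the paper's argument: pass to the adapted form of \autoref{lem:adptCan-Form}, kill the exact part of the representative by twisting with $\id-\hbar\Lie_{\tilde\alpha^\ver}$, use homogeneity plus tensoriality of the obstruction $\hkrInv(D_2)$ to reduce \eqref{eq:RepresentableOrder1Condition} to the pairs $\Jmap(s),\Jmap(t)$, and then invoke \autoref{prop:RepOrdbyOrd} and \autoref{Cor: RepOrd2}; your explicit generator analysis is if anything more careful than the paper's.

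The forward direction, however, has a genuine gap. You claim ``$\rho_1(H)(1)=I^*C_1(H,1)=0$ for every $H$'' and conclude that \eqref{eq:RepresentableOrder1Condition} collapses to $I^*C_2^-(F,G)=0$, hence that $I^*\tilde\Phi(\star)=0$ \emph{as a form}. But the hypothesis only provides \emph{some} representation $\rho=\sum_k\hbar^k\rho_k$ of $\star$; the identity $\rho_1(F)(I^*G)=I^*C_1(F,G)$ in \autoref{prop:RepOrdbyOrd}~\ref{prop:RepOrdbyOrd_1} describes one particular choice of first-order representation, not an arbitrary one, and you may not replace the given $\rho_1$ by that choice without possibly destroying extendability to higher orders --- which is exactly what is at stake in the theorem. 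For the given $\rho$, the function $\chi(F)\coloneqq\rho_1(F)(1)$ need not vanish, and \eqref{eq:RepresentableOrder1Condition} only yields, on linear functions, that $I^*\tilde\Phi(\star)$ equals the exact form $\D_{A_\Normal}\alpha$ with $\alpha(s)=\rho_1\bigl(\Jmap(\tilde s)\bigr)(1)\at{C}$, not that it is zero. Indeed your conclusion is false in general: by your own converse, a star product in adapted form with $I^*\tilde\Phi(\star)=\D_{A_\Normal}\alpha$ exact but nonzero is representable, yet $I^*C_2^-\bigl(\Jmap(s),\Jmap(t)\bigr)=\D_{A_\Normal}\alpha(s\at{C},t\at{C})\neq 0$. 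The paper closes precisely this gap: it defines $\alpha$ by the formula above, proves it is a well-defined element of $\Secinfty(A_\Normal^*)$ (using \eqref{eq:lem:adptCan-Form}, homogeneity of $\rho_1(\pr^*f)$ and the order-one representation property), extends it to $\hat\alpha\in\Secinfty(A_\Total^*)$, and twists $\star$ and $\rho$ by $S=\id+\hbar\hat\alpha^\ver$ so that the new representation satisfies $\rho'_1(\Jmap(s))(1)=0$; only then does \autoref{prop:RepOrdbyOrd}~\ref{prop:RepOrdbyOrd_2} give $0=I^*C_2'^-\bigl(\Jmap(s),\Jmap(t)\bigr)=\bigl(I^*\tilde\Phi(\star)-\D_{A_\Normal}\alpha\bigr)(s\at{C},t\at{C})$, i.e.\ $I^*\Phi(\star)=0$ in cohomology. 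Your argument is missing this construction of $\alpha$ and the accompanying twist, so as written the forward implication does not go through.
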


\begin{proof}
Let us assume
$I^*\Phi(\star) = 0 \in \mathrm{H}^2(A_\Normal)$.
By \autoref{lem:adptCan-Form} we may assume that $\star$ is already representable up to order $1$
and that 
$I^*\tilde{\Phi}(\star) = 0$.
Moreover, we have 
 $\rho_1(F)(1) = 0$ for all $F\in \Cinfty(A_\Total^*)$, which implies that 
\begin{equation}
	\rho_1\bigl(\{F,G\}_\KKS\bigr)(1)
	- \bigl(\rho_1(F) \circ \rho_1(G)\bigr)(1)
	- \bigl(\rho_1(G) \circ \rho_1(F)\bigr)(1)
	= 0,
\end{equation}
for all $F,G \in \Cinfty(A_\Total^*)$,
and
$I^*C_2^-(\Jmap(s), \Jmap(t))
= I^*\tilde{\Phi}(s\at{C}, t\at{C})
= 0$.
Together this shows that \eqref{eq:RepresentableOrder1Condition} is satisfied and hence
$\star$ is representable up to order $2$ by \autoref{prop:RepOrdbyOrd} and 
therefore representable by \autoref{Cor: RepOrd2}.

Conversely, let us assume that the star product
$\star = \sum_{r=0}^{\infty}\hbar^r C_r$ is representable via
$\rho=\sum_{k=0}^\infty\hbar^k\rho_k$.
Moreover, we can assume that $\star$ is given as in 
\autoref{lem:adptCan-Form}.
We define  
$\alpha \in \Secinfty(A_\Normal^*)$
by 
	\begin{align*}
		\alpha(s)=\rho_1\bigl(\Jmap(\tilde s)\bigr)(1)\at{C},
	\end{align*}
where $\tilde s\in \Secinfty(A_\Total)$ such that $\tilde s\at{C}= s$.
Let us check that this is well-defined:
First assume we have a $s \in \Secinfty(A_\Total)$, such that $s\at{C} = 0$, 
then we may find functions $f_i \in \vanishing_C$ and sections
$s^i \in \Secinfty(A_\Total)$, 
such that $s = f_i s^i$, this means 
\begin{align*}
	\rho_1\bigl(\Jmap(s)\bigr)(1)
	=
	\rho_1\bigl(\Jmap(f_i s^i)\bigr)(1)
	=
	\rho_1\bigl(\Jmap(s^i)\bigr)(I^*\pr^*f_i)
		+ I^*\Jmap(s_i) \bigl(\rho_1(\pr^*f_i)(1)\bigr)
		- I^*C_1(\Jmap(s_i), \pr^*f_i),
\end{align*}
where we exploited that $\rho$ is a representation in order $1$.
The first summand vanishes since 
$f_i \in \vanishing_C$, the second summand vanishes, since $\rho_1(p^*f)$ is a homogeneous
differential operator of degree $-1$ on $E^*$ and the third summand vanishes since $\star$ satisfies
\eqref{eq:lem:adptCan-Form}.
This means that the map $\alpha$ is well-defined. Similarly one 
can show that $\alpha(fs) = f\alpha(s)$ for all $f \in \Cinfty(C)$ and
$s\in \Secinfty(A_\Normal)$, which implies that $\alpha\in \Secinfty(A_\Normal^*)$.
We may find now an extension 
$\hat\alpha \in \Secinfty(A_\Total^*)$, such that
$I^*\hat\alpha = \alpha$.
We define now the homogeneous equivalence transformation
$S \coloneqq \id +\hbar\hat\alpha^\ver$ and consider the
homogeneous star product $\star'$ with 
\begin{align*}
	F \star' G \coloneqq S\bigl(S^{-1}(F) \star S^{-1}(G)\bigr)
\end{align*}
which is representable via $\rho' = \rho \circ S^{-1}$.
Note that for
$s \in \Secinfty(A_\Total)$
with 
$s \at{C}\in \Secinfty(A_\Normal)$, we get $\rho'_1(\Jmap(s))(1) = 0$, which using \autoref{prop:RepOrdbyOrd}~\ref{prop:RepOrdbyOrd_2} 
implies that 
\begin{align*}
	0
	= I^*C_2'^-(\Jmap(s),\Jmap(t))
	= (I^*\tilde\Phi(\star) - \D \alpha)(s\at{C},t\at{C})
\end{align*}
for all $s,t\in \Secinfty(A_\Total)$ with 
$s\at{C}, t\at{C} \in \Secinfty(A_\Normal)$.
Therefore $I^*\Phi(\star) = I^*\Phi(\star') = 0$.
\end{proof}

\subsection{Classification of Projectable Star Products}
\label{sec:ClassificationProjStarProd}

Let us now come back to projectable star products.
In contrast to representable star products not all star products which are homogeneously
equivalent to a projectable one will be projectable themselves.
This is intuitively clear, since a general homogeneous equivalence need not preserve 
the subalgebra $\normalizer_{E_\Normal}$ or the ideal $\vanishing_{E_\Normal}$,
but we will later on also give an explicit example, see \autoref{ex:NonEquivReducedStarProd}.
Thus let us define a specialized notion of equivalences.

\begin{definition}[Projectable equivalance]
	\label{def:ProjectableEquivalence}
	Let $E_\Total \hookleftarrow E_\Normal \twoheadrightarrow E_\red$ be a constraint vector bundle.
	Two projectable star products $\star$ and $\tilde\star$ on $E_\Total$
	are said to be \emph{projectably equivalent},
	if they are homogeneously equivalent via
	$S = \id + \sum_{r=1}^\infty \hbar^r S_r$, 
	such that
	\begin{align}
		S_i(\normalizer_{E_\Normal}) \subseteq \normalizer_{E_\Normal}
		\qquad\text{and}\qquad
		S_i(\vanishing_{E_\Normal}) \subseteq \vanishing_{E_\Normal}.
	\end{align}
	If $S$ is only an equivalence up to order $k$, we call $\star$ and $\tilde\star$
	\emph{projectably equivalent up to order $k$}.
\end{definition}
As a first consequence, we get immediately that
projectable equivalences induce homogeneous equivalences
on the reduced bundles.

\begin{lemma}
	Let $\star$ and $\tilde\star$ be homogeneous projectable star products on
	$E_\Total \hookleftarrow E_\Normal \twoheadrightarrow E_\red$
	which are projectably equivalent up to order $k$,
	then their reductions $\star_\red$ and $\tilde\star_\red$ on $E_\red$
	are homogeneously equivalent up to order $n$.
\end{lemma}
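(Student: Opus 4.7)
The plan is to show that the projectable equivalence $S$ between $\star$ and $\tilde\star$ descends through the quotient $\normalizer_{E_\Normal}/\vanishing_{E_\Normal} \cong \Cinfty(E_\red)\formal{\hbar}$ to a homogeneous equivalence between $\star_\red$ and $\tilde\star_\red$.

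Concretely, let $S = \id + \sum_{r=1}^{k} \hbar^r S_r$ be a projectable equivalence up to order $k$. Since by assumption each $S_r$ preserves both $\normalizer_{E_\Normal}$ and $\vanishing_{E_\Normal}$, the map $S$ induces a well-defined $\ComplexNum\formal{\hbar}$-linear map
\begin{equation*}
    S_\red \colon \Cinfty(E_\red)\formal{\hbar} \to \Cinfty(E_\red)\formal{\hbar},
    \qquad
    S_\red([F]_\red) \coloneqq [S(F)]_\red,
\end{equation*}
for $F \in \normalizer_{E_\Normal}$. The first step is to verify that $S_\red$ is an equivalence up to order $k$: for $F,G \in \normalizer_{E_\Normal}$, projectability of $\star$ ensures $F \star G \in \normalizer_{E_\Normal}$ and $[F \star G]_\red = [F]_\red \star_\red [G]_\red$ by the construction of $\star_\red$. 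Then using that $S$ is an equivalence up to order $k$,
\begin{equation*}
    S_\red\bigl([F]_\red \star_\red [G]_\red\bigr)
    = [S(F \star G)]_\red
    = [S(F) \mathbin{\tilde\star} S(G)]_\red
    = S_\red([F]_\red) \mathbin{\tilde\star_\red} S_\red([G]_\red)
\end{equation*}
modulo $\hbar^{k+1}$, which is exactly the required equivalence property.

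Next I would check that $S_\red = \id + \sum_{r=1}^{k} \hbar^r (S_r)_\red$ is a series of differential operators on $E_\red$. Each $S_r$ is a differential operator on $E_\Total$ that sends $\vanishing_{E_\Normal}$ into itself, so it induces a differential operator on $\Cinfty(E_\Normal) = \Cinfty(E_\Total)/\vanishing_{E_\Normal}$; since it also preserves $\normalizer_{E_\Normal}$, this induced operator restricts to the subalgebra of $P$-invariant functions on $E_\Normal$, which via the surjective submersion $P \colon E_\Normal \twoheadrightarrow E_\red$ is identified with $\Cinfty(E_\red)$. Locality of a differential operator along the fibres of $P$ then transfers to locality on $E_\red$, so $(S_r)_\red \in \Diffop(E_\red)$.

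Finally, I would verify homogeneity. The Euler vector field $\euler$ on $E_\Total$ is tangent to the linear submanifold $E_\Normal$ and, because $P$ is a morphism of vector bundles, the restriction $\euler\at{E_\Normal}$ is $P$-related to the Euler vector field $\euler_\red$ on $E_\red$. Hence for any $F \in \normalizer_{E_\Normal}$ with $I^*F = P^*\tilde F$, one has $I^*(\Lie_\euler F) = P^*(\Lie_{\euler_\red} \tilde F)$, so the descended derivation acts as $\Lie_{\euler_\red}$ on $\Cinfty(E_\red)$. Combined with $[\Lie_\euler, S_r] = -r S_r$ this yields $[\Lie_{\euler_\red}, (S_r)_\red] = -r (S_r)_\red$, which is the homogeneity of $S_\red$. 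The main technical point to watch is the descent of locality to $E_\red$ through the submersion $P$; everything else is essentially bookkeeping through the two-step quotient defining $\Cinfty(E_\red)\formal{\hbar}$.
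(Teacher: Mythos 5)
Your proposal is correct and follows essentially the same route as the paper's proof: the projectability conditions on the $S_r$ let $S$ descend through the identification $\normalizer_{E_\Normal}/\vanishing_{E_\Normal} \cong \Cinfty(E_\red)$ to $S_\red = \id + \sum_r \hbar^r (S_r)_\red$, which is then a homogeneous equivalence of the reduced star products. The paper states this in one line; you simply spell out the well-definedness, multiplicativity, locality and homogeneity checks explicitly.
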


\begin{proof}
Let $S = \id + \sum_{r=1}^{\infty} \hbar^r S_r$ be a projectable equivalence between
$\star$ and $\tilde\star$. 
By \autoref{def:ProjectableEquivalence} every $S_r$ induces a homogeneous differential operator
$(S_r)_\red$ on $E_\red$.
Then $S_\red \coloneqq \id + \sum_{r=1}^{\infty} \hbar^r (S_r)_\red$
is a homogeneous equivalence between $\star_\red$ and $\tilde\star_\red$.
\end{proof}
In analogy to the case of homogeneous star products, we show now that projectable star products are always projectably equivalent up to order $1$, and it suffices to check projectable equivalence up to order $2$.
Compare this with \autoref{lem:EquivHomoStarProducts} and \autoref{prop:EquivHomStar}.

\begin{proposition}
	\label{prop:projectablePotential}
	Let $E_\Total \hookleftarrow E_\Normal \twoheadrightarrow E_\red$
	be a constraint vector bundle.
	Moreover, let $C \in \HCdiff^1(E_\Total)$
	be $\partial$-closed and symmetric.
	Moreover, assume that
	\begin{equation}
		\label{eq:projectablePotential_1}
		C(G_1,G_2) \in \normalizer_{E_\Normal}
		\qquad\text{and}\qquad
		C(F,H) \in \vanishing_{E_\Normal}
	\end{equation}
	for all $F \in \Cinfty(E_\Total)$,
	$G_1, G_2 \in \normalizer_{E_\Normal}$,
	$H \in \vanishing_{E_\Normal}$.
	Then there exists $D \in \HCdiff^0(E_\Total)$
	satisfying
	\begin{equation}
		\label{eq:projectablePotential_2}
		D(G) \in \normalizer_{E_\Normal}
		\qquad\text{and}\qquad
		D(H) \in \vanishing_{E_\Normal},
	\end{equation}
	for all $G \in \normalizer_{E_\Normal}$,
	$H \in \vanishing_{E_\Normal}$,
	such that $\partial D = C$.
\end{proposition}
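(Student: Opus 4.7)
The plan is in two stages: first produce \emph{some} $D_0 \in \HCdiff^0(E_\Total)$ with $\partial D_0 = C$, then refine the construction so that $D_0$ automatically respects the constraint structure \eqref{eq:projectablePotential_2}.

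For the first stage, symmetry of $C$ forces $\AntiSymmetrizer(C) = 0$, so \autoref{lem:Skewstuff} directly yields $C = \partial(\hkrHomo C)$, using any torsion-free covariant derivative $\nabla$ on $E_\Total$. This gives the initial candidate $D_0 \coloneqq \hkrHomo C \in \HCdiff^0(E_\Total)$. It remains to ensure that $D_0(\vanishing_{E_\Normal}) \subseteq \vanishing_{E_\Normal}$ and $D_0(\normalizer_{E_\Normal}) \subseteq \normalizer_{E_\Normal}$, which so far has not been used.

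For the second stage, I would select $\nabla$ to be \emph{adapted to the constraint data}, meaning that $\nabla$ restricts along $E_\Normal$ to a covariant derivative on $E_\Normal$ and the restricted connection descends through $P \colon E_\Normal \twoheadrightarrow E_\red$ to a covariant derivative on $E_\red$. Such $\nabla$ can be built starting from a covariant derivative on $E_\red$, lifting it horizontally via an Ehresmann connection for $P$, and extending from $E_\Normal$ to $E_\Total$ using a chosen complementary subbundle. The explicit formula for $\hkrHomo$ from \cite{dippell.esposito.schnitzer.waldmann:2024a:pre} is built out of iterated covariant derivatives followed by fibrewise evaluation at the footpoint, and with $\nabla$ adapted the effect on the two submodules is controllable: for $H \in \vanishing_{E_\Normal}$, tangential derivatives $\nabla_X H$ with $X$ tangent to $E_\Normal$ remain in $\vanishing_{E_\Normal}$, and the transverse contributions enter $D_0(H)$ only through symbols that the hypothesis $C(F,H) \in \vanishing_{E_\Normal}$ forces to lie in $\vanishing_{E_\Normal}$. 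Analogously, descent of $\nabla$ through $P$ together with the hypothesis $C(\normalizer_{E_\Normal}, \normalizer_{E_\Normal}) \subseteq \normalizer_{E_\Normal}$ yields $D_0(\normalizer_{E_\Normal}) \subseteq \normalizer_{E_\Normal}$.

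The hard part will be making this compatibility precise, since it rests on unpacking the homotopy operator $\hkrHomo$ from \cite{dippell.esposito.schnitzer.waldmann:2024a:pre}. If a direct global check proves unwieldy, a viable alternative is to work in charts adapted simultaneously to $E_\Total$, to the subbundle $E_\Normal$ and to the submersion $P$: in such coordinates the module conditions \eqref{eq:projectablePotential_1} translate into pointwise constraints on the symbol of $C$, from which one reads off a symbol for $D$ satisfying \eqref{eq:projectablePotential_2}, and then glues via a partition of unity subordinate to the adapted charts. A minor secondary obstacle is that a naive modification $D_0 \rightsquigarrow D_0 + X$ by a derivation $X$ (the ambiguity in the potential) only allows adjustment by vector fields, which is strictly weaker than the module conditions demand; hence the adapted-connection step really is essential rather than an a posteriori correction.
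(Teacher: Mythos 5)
Your first stage (symmetry plus \autoref{lem:Skewstuff} gives \emph{some} potential $D_0=\hkrHomo C$) is fine, but it is not where the content of the proposition lies; everything is in the module conditions \eqref{eq:projectablePotential_2}, and there your argument is an announcement rather than a proof. The claim that for a connection ``adapted to the constraint data'' the homotopy $\hkrHomo$ automatically turns the hypotheses \eqref{eq:projectablePotential_1} into \eqref{eq:projectablePotential_2} is exactly the nontrivial statement of the projectable HKR theorem of \cite{dippell.esposito.schnitzer.waldmann:2024a:pre}; you do not derive it from \autoref{thm:classicalHKR} or from anything else available here, and phrases like ``the transverse contributions enter only through symbols that the hypothesis forces to lie in $\vanishing_{E_\Normal}$'' are precisely the step that needs an argument. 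The paper does not prove this compatibility either, but it invokes the quoted theorem in a weaker, genuinely available form and in a different place: from symmetry and $C(\Cinfty(E_\Total),\vanishing_{E_\Normal})\subseteq\vanishing_{E_\Normal}$ it first concludes that $C$ is \emph{tangential}, hence restricts to a bidifferential operator $\hat C$ on $E_\Normal$; the hypothesis on $\normalizer_{E_\Normal}$ makes $\hat C$ projectable, so the projectable HKR theorem on $E_\Normal$ yields a projectable potential $\hat S$ with $\partial\hat S=\hat C$; one extends $\hat S$ to $S$ on $E_\Total$, observes that $C-\partial S$ is closed, symmetric and vanishes along $E_\Normal$, and then a second (relative) HKR application produces $T$ vanishing along $E_\Normal$ with $C-\partial S=\partial T$, so $D=S+T$. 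Your one-shot construction on all of $E_\Total$ would need a strictly stronger relative statement than the one the reference provides, and you give no proof of it; note also that the two-step structure matters, since (as you yourself observe) the ambiguity of a potential is only a vector field, so one cannot repair a badly chosen $D_0$ afterwards --- the paper's correction $T$ is a full differential operator vanishing along $E_\Normal$, obtained \emph{before} committing to a potential, not an a posteriori fix.

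The fallback you offer (adapted charts, read off a symbol for $D$, glue by a partition of unity) has a concrete flaw for the normalizer condition. Gluing local potentials as $D=\sum_\alpha\chi_\alpha D_\alpha$ does give $\partial D=C$, because $\partial(\chi_\alpha D_\alpha)=\chi_\alpha\,\partial D_\alpha$ for a function $\chi_\alpha$, and it preserves $D(\vanishing_{E_\Normal})\subseteq\vanishing_{E_\Normal}$; but $\chi_\alpha D_\alpha(G)\in\normalizer_{E_\Normal}$ requires $I^*\chi_\alpha$ to be constant along the fibres of $P$, i.e. the cutoffs themselves must be projectable. A partition of unity subordinate to charts is never projectable unless every fibre of $P$ lies in a single chart, so the glued potential generally fails $D(\normalizer_{E_\Normal})\subseteq\normalizer_{E_\Normal}$. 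This global obstruction is precisely what the projectable HKR theorem is designed to overcome, which is why the paper routes the argument through it rather than through local charts.
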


\begin{proof}
 Since $C$ is symmetric we have 
\begin{align*}
	C\bigl(\vanishing_{E^*},\Cinfty(A_\Total^*)\bigr)
	= C\bigl(\Cinfty(A_\Total^*), \vanishing_{E^*}\bigr)
	\subseteq \vanishing_{E^*}.
\end{align*}
This means in particular that $C$ is a tangential bidifferential operator
and hence restricts to a bidifferential operator
$\hat{C}$ on $E_\Normal$.
By \eqref{eq:projectablePotential_1}
the bidifferential operator
$\hat{C}$ is 
projectable in the sense of \cite{dippell.esposito.schnitzer.waldmann:2024a:pre}.
By the projectable HKR-Theorem \cite{dippell.esposito.schnitzer.waldmann:2024a:pre},
$\hat{C}$ is exact since it is symmetric, i.e. there is a projectable differential operator 
$\hat{S}\in \Diffop(E^*)$, such that
$\del \hat{S} = \hat{C}$.
We extend $\hat{S}$ to a homogeneous differential operator $S$ on $E_\Total$.
Then 
$C - \partial S$ vanishes along $E_\Normal$ and by the usual
HKR-Theorem we find a homogeneous differential operator $T$,
which vanishes along $E_\Normal$
such that $C - \partial S = \partial T$.
Thus for $D \coloneqq S + T$ the claim follows.  	
\end{proof}

\begin{corollary}
	\label{cor:ProjectableEquivOrdern}
	Let $\star$ and $\tilde\star$ be projectable star products on
	$E_\Total \hookleftarrow E_\Normal \twoheadrightarrow E_\red$.
	\begin{corollarylist}
		\item The star products $\star$ and $\tilde\star$
		are projectably equivalent up to order $1$.
		\item If $\star$ and $\tilde\star$ are projectably
		equivalent up to order $n \geq 2$,
		then they are projectably equivalent up to order $n+1$.
	\end{corollarylist}
\end{corollary}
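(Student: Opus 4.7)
The plan is to derive both claims as direct applications of \autoref{prop:projectablePotential} to the lowest-order obstruction cochain between $\star$ and $\tilde{\star}$. In each case the main point is to verify that this cochain is $\partial$-closed, symmetric, and satisfies the compatibility condition \eqref{eq:projectablePotential_1} with $\normalizer_{E_\Normal}$ and $\vanishing_{E_\Normal}$.

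For part (i), both $\star$ and $\tilde{\star}$ deform the same linear Poisson structure $\pi_\KKS$ by the standing convention, so $C_1^- = \tilde{C}_1^-$ and hence $C_1 - \tilde{C}_1$ is symmetric; it is trivially $\partial$-closed since both $C_1$ and $\tilde{C}_1$ are. Projectability gives $F \star H, F \mathbin{\tilde{\star}} H \in \vanishing_{E_\Normal}\formal{\hbar}$ for all $F \in \Cinfty(E_\Total)$ and $H \in \vanishing_{E_\Normal}$, and reading off the coefficient of $\hbar$ yields $(C_1 - \tilde{C}_1)(F,H) \in \vanishing_{E_\Normal}$. An analogous argument, using that $\normalizer_{E_\Normal}\formal{\hbar}$ is a subalgebra for both products, provides the remaining inclusion. \autoref{prop:projectablePotential} then produces a differential operator $S_1$ preserving $\normalizer_{E_\Normal}$ and $\vanishing_{E_\Normal}$ with $\partial S_1 = C_1 - \tilde{C}_1$, so that $\id + \hbar S_1$ is the desired projectable equivalence up to order $1$.

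For part (ii), we first conjugate $\tilde{\star}$ by the given order-$n$ projectable equivalence to obtain a projectably equivalent star product which coincides with $\star$ up to order $n$; projectability is preserved because both the equivalence and its formal inverse send $\normalizer_{E_\Normal}$ and $\vanishing_{E_\Normal}$ into themselves. Under this reduction, the Maurer-Cartan equation in order $n+1$ gives $\partial(C_{n+1} - \tilde{C}_{n+1}) = 0$. The crucial observation is that for $n \geq 2$ the difference is automatically symmetric: by \autoref{lem:Skewstuff} its total anti-symmetrization equals $\hkr\, \hkrInv(C_{n+1} - \tilde{C}_{n+1})$, which by the homogeneous HKR theorem (\autoref{prop:HomoHKRTheorem}) is $\hkr$ applied to a bivector field on $E_\Total$ homogeneous of degree $-(n+1) \leq -3$. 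Since no non-zero bivector field of such low homogeneous degree exists on a vector bundle, this anti-symmetrization vanishes. The ideal/subalgebra compatibility is checked exactly as in part (i), and \autoref{prop:projectablePotential} supplies $S_{n+1}$ extending the equivalence to order $n+1$.

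The main obstacle is precisely the symmetry of the obstruction in part (ii); this is where the bound $n \geq 2$ and the homogeneity hypothesis are indispensable. For $n = 1$ the relevant bivector fields have degree $-2$ and carry genuine cohomological content, as already witnessed by the relative class in \autoref{prop:relativeClass}, which is exactly why part (i) has to be handled separately through the Poisson-induced symmetry of $C_1 - \tilde{C}_1$ rather than through a degree count. Once symmetry is established in either case, the symmetric hypothesis of \autoref{prop:projectablePotential} is met and the projectable HKR machinery behind it delivers the required potential.
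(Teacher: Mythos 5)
Your proof is correct and follows essentially the same route as the paper: reduce to star products coinciding up to order $n$, note $\del$-closedness of the difference, obtain symmetry (at order $1$ from the common Poisson bracket, at order $n+1\geq 3$ from the homogeneity degree count killing the bivector field via \autoref{lem:Skewstuff} and \autoref{prop:HomoHKRTheorem}), and then invoke \autoref{prop:projectablePotential} to get a projectable potential, hence a projectable equivalence one order higher. You are merely more explicit than the paper about the routine checks it leaves implicit, namely that conjugating by the given projectable equivalence preserves projectability and that the difference cochain satisfies condition \eqref{eq:projectablePotential_1}.
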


\begin{proof}
We can assume that $\star$ and $\tilde\star$ coincide up to order $n \neq 1$.
Then we get that
\begin{align*}
	\del (C_{n+1}-\tilde{C}_{n+1}) = 0.
\end{align*}
For $n=0$ we know that $(C_1 - \tilde{C}_1)^- = 0$.
For $n \geq 2$ we get
$(C_{n+1} - \tilde{C}_{n+1})^- = \hkr \hkrInv (C_{n+1} - \tilde{C}_{n+1})$
by \autoref{lem:Skewstuff}.
Now by homogeneity of $\star$ and $\tilde\star$ we know
$\hkrInv (C_{n+1} - \tilde{C}_{n+1}) = 0$
and thus also in this case we have
$(C_{n+1} - \tilde{C}_{n+1})^- = 0$.
This means that
$C_{n+1} - \tilde{C}_{n+1}$
is symmetric and therefore by 
\autoref{lem:Skewstuff} exact,
giving a projectable equivalence.
\end{proof}
The aim is now to classify all projectable star products in a way analogous to that
of homogeneous star products.
For this, let $\star$ be a projectable star product on a constraint vector bundle
$E_\Total \hookleftarrow E_\Normal \twoheadrightarrow E_\red$.
Since $\star$ is in particular homogeneous it induces a linear Poisson structure
$\{\argument,\argument\}_\KKS$ on $\Cinfty(E_\Total)$,
such that additionally $\normalizer_{E_\Normal} \subseteq \Cinfty(E_\Total)$
is a Poisson subalgebra and $\vanishing_{E_\Normal} \subseteq \normalizer_{E_\Normal}$
is a Poisson ideal.
In order to consider suitable characteristic classes for projectable star products
we need to realize this situation as a dual of some sort of Lie algebroid.
Thus consider a diagram
\begin{equation}
	\label{Diag:Liealg}
\begin{tikzcd}
	A_\Total
		\arrow[d]
	& A_\Normal
		\arrow[l,hook',"I"']
		\arrow[d]
		\arrow[r, twoheadrightarrow,"P"]
	& A_\red
		\arrow[d]\\
	M 
	& C
		\arrow[l,hook',"i"']
		\arrow[r,twoheadrightarrow,"p"]
	& M_\red
\end{tikzcd}
\end{equation}
consisting of a Lie algebroid $A_\Total \to M$, 
a subalgebroid $I \colon A_\Normal \to C$ over 
a submanifold $i \colon C \hookrightarrow M$
and a surjective submersion of Lie algebroids
$P \colon A_\Normal \twoheadrightarrow A_\red$
over a surjective submersion
$p \colon C \twoheadrightarrow M_\red$.
We call this triple a \emph{constraint Lie algebroid}
and often denote it by 
$A_\Total \hookleftarrow A_\Normal \twoheadrightarrow A_\red$.
We can "dualize" such a constraint Lie algebroid and obtain
\begin{equation}
\begin{tikzcd}
	A_\Total^*\arrow[d]
	& A^\vee_\Normal
		\arrow[l, "I^\vee"']
		\arrow[d]
		\arrow[r,"P^\vee"]
	& A^*_\red
		\arrow[d]\\
	M 
	& C
		\arrow[l,hook',"i"']
		\arrow[r,"p"]
	& M_\red
\end{tikzcd},
\end{equation}
where 
\begin{equation}
	A_\Normal^\vee \coloneqq \Ann(\ker P),
\end{equation}
see \cite{dippell.kern:2025a} for details.
Then $\Cinfty(A^*_\Total)$
becomes a Poisson algebra
with Poisson subalgebra $\normalizer_{A^\vee_\Normal}$
and a Poisson ideal $\vanishing_{A^\vee_\Normal} \subseteq \normalizer_{A^\vee_\Normal}$.
In fact, it is not hard to see that every constraint vector bundle 
$E_\Total \hookleftarrow E_\Normal \twoheadrightarrow E_\red$
with linear Poisson structure on $E_\Total$ being compatible with
$\normalizer_{E_\Normal}$ and $\vanishing_{E_\Normal}$ in the above sense is of the form
$A^*_\Total \hookleftarrow A^\vee_\Normal \twoheadrightarrow A^*_\red$.

Given a constraint Lie algebroid
$A_\Total \hookleftarrow A_\Normal \twoheadrightarrow A_\red$
we define the subcomplex $\Secinfty_\proj(\Anti^\bullet A^*_\Total)$
of the Lie algebroid complex $\Secinfty(\Anti^\bullet A^*_\Total)$
consisting of \emph{projectable forms} by
\begin{equation}
	\Secinfty_\proj(\Anti^k A^*_\Total)
	\coloneqq
	\{\alpha \in \Secinfty(\Anti^k A^*_\Total)
		\mid \exists\, \alpha_\red \in \Secinfty(\Anti^k A^*_\red)
	\text{ such that } I^*\alpha = P^*\alpha_\red\}. 
\end{equation}
Note that we obtain a maps
\begin{equation}
\begin{tikzcd}
	\Secinfty(\Anti^\bullet A^*_\Total)
		\arrow[r,hookleftarrow]
	& \Secinfty_\proj(\Anti^\bullet A^*_\Total)
		\arrow[r,"\red",twoheadrightarrow]
	&\Secinfty(\Anti^\bullet A^*_\red),
\end{tikzcd}
\end{equation}
leading to the following maps in cohomology:
\begin{equation}
\begin{tikzcd}
	\Cohom^\bullet(A_\Total)
	&\Cohom^\bullet_\proj(A_\Total)
		\arrow[l,"{[\argument]}"']
		\arrow[r,"\red"]
	&\Cohom^\bullet(A_\red)
\end{tikzcd}.
\end{equation}
The question arises what we can learn about a homogeneous star product on $A^*_\Total$
if we know that its characteristic class $\Phi(\star)$ admits a projectable
representative, i.e. there exists $B \in \Secinfty_\proj(\Anti^2 A^*_\Total)$
such that $\Phi(\star) = [B]$.
To answer this, we need to relate projectable star products to representable star products as discussed
in \autoref{sec:PresentableStarProd}.

\begin{remark}
	The notion of constraint Lie algebroid agrees with the one introduced in
	\cite{dippell.kern:2025a}.
	Moreover, what we call projectable forms is there given by the $\mathrm{N}$-component
	of the constraint Chevalley-Eilenberg complex.
\end{remark}

We consider the vector bundle map 
$A^\vee_\Normal \to A_\Total^* \times A_\red^*$ which covers the embedding
$C \hookrightarrow M \times M_\red$.
Having homogeneous star products on $A_\Total^*$ and on $A_\red^*$ one can build a homogeneous
star product on $A_\Total^*\times \cc{A}_\red^*$.
It turns out that the question of this star product being representable on $A^\vee_\Normal$ is connected to the projectability of the star product 
on $A_\Total^*$.
To be more precise, let $\star$ be a projectable 
star product on $A_\Total^*$ with reduction $\star_\red$. We define now the product 
\begin{align}
	(f_1\tensor g_1) \hat\star (f_2\tensor g_2)
	= f_1\star f_2 \tensor g_2\star_\red g_1	
\end{align}
on $\Cinfty(A_\Total^*) \tensor \Cinfty(A_\red^*)$. Note that we can extend it to 
$\Cinfty(A_\Total^* \times A_\red^*)$, since alle the maps are differentiable.
This star product is quantizing
$\pi_\KKS - (\pi_{\KKS})_{\red}$.
We can now define an algebra map
$\rho \colon \Cinfty(A_\Total^*) \tensor \Cinfty(A_\red^*) \to \Diffop(A_\Normal^\vee)$.
by
\begin{equation}
	\rho(F \tensor 1)(H)
	= I^* (F \star \tilde H)
	\qquad\text{and}\qquad
	\rho(1 \tensor G)(H) 
	= I^*(\tilde{H} \star \tilde{G}) 
\end{equation}
for $I^*\tilde{H} = H$ and 
$I^*\tilde{G} = P^*{G}$.
Again, this yields a well-defined
$\rho \colon \Cinfty(A^*_\Total \times A^*_\red) \to \Diffop(A_\Normal^\vee)$,
which one can easily see to be a representation of $\hat\star$.
Let us now apply \autoref{Thm: Pull-backcharclass}
to obtain computable characterisations of star products which are equivalent to projectable ones. 

\begin{theorem}
	\label{thm:ExProjStar}
Let $A_\Total \hookleftarrow A_\Normal \twoheadrightarrow A_\red$
be a constraint Lie algebroid
and let $\star$ be a homogeneous star product on $A_\Total^*$.
\begin{theoremlist}
	\item If there exists a closed $B \in \Secinfty_\proj(\Anti^2 A^*_\Total)$
	such that $\Phi(\star) = [B]$,
	i.e. there exists a closed $B_\red \in \Secinfty(\Anti^2 A^*_\red)$
	such that $I^*\Phi(\star) = P^*[B_\red]$,
	then $\star$ is homogeneously equivalent to a projectable
	star product $\star'$ with
	$\Phi(\star'_\red) = [B_\red]$.
	\item If $\star$ is projectable,
	then $I^*\Phi(\star) = P^*\Phi(\star_\red)$.  
\end{theoremlist}

\end{theorem}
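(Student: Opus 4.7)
My plan is to establish the second claim first, since it follows from a direct computation using the adapted form of \autoref{lem:adptCan-Form}, and then to use it in the proof of the first claim, where the key step is to construct an explicit projectable reference star product whose characteristic class is $[B]$.

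For the second claim, assume $\star$ is projectable with reduction $\star_\red$. I would first put $\star$ in adapted form via a projectable analogue of \autoref{lem:adptCan-Form}: choosing a projectable homogeneous covariant derivative $\nabla$ on $A_\Total^*$, the operator $S = \id - \frac{\I\hbar}{2}D^\nabla$ appearing in the proof of that lemma is a projectable equivalence, yielding a projectable star product in adapted form. Since both $\Phi(\star)$ and the reduction $\star_\red$ are preserved under projectable equivalences, I may assume $\star$ is already in adapted form. For $s,t \in \Secinfty(A_\Total)$ whose restrictions to $C$ lie in $\Secinfty(A_\Normal)$ and project under $P$ to sections $\tilde s, \tilde t \in \Secinfty(A_\red)$, projectability of $\star$ gives $I^*(\Jmap(s) \star \Jmap(t)) = P^*(\Jmap(\tilde s) \star_\red \Jmap(\tilde t))$. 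Taking the anti-symmetric part in order $\hbar^2$ and invoking the adapted form identity from \autoref{lem:adptCan-Form} yields
\begin{equation*}
(I^*\tilde\Phi(\star))(i^\sharp s, i^\sharp t)
= I^* C_2^-(\Jmap(s), \Jmap(t))
= P^* (C_2)_\red^-(\Jmap(\tilde s), \Jmap(\tilde t))
= (P^*\tilde\Phi(\star_\red))(\tilde s, \tilde t),
\end{equation*}
and passing to cohomology gives $I^*\Phi(\star) = P^*\Phi(\star_\red)$.

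For the first claim, I would construct an explicit projectable reference star product $\star'$ with characteristic class $[B]$. Choose a projectable homogeneous covariant derivative $\nabla$ on $A_\Total^*$ and set $\star' \coloneqq \star_{(\nabla, B)}$ as in \autoref{prop:ExistenceHomStarProd}. By \autoref{cor:CharClassOfConstruction}, $\Phi(\star') = [B] = \Phi(\star)$, so \autoref{thm:Classification} implies that $\star$ and $\star'$ are homogeneously equivalent. To verify that $\star'$ is itself projectable, I would argue by induction on $r$: the base cases $C_1 = \frac{\I}{2}\{\argument,\argument\}_\KKS$ and $C_2 = -\frac{1}{4}(\hkrHomo[C_1,C_1])^+ + \hkr B^\ver$ are projectable because the linear Poisson structure is compatible with the constraint Lie algebroid structure by construction, because $B$ is projectable by assumption, and because the projectable HKR theorem of \cite{dippell.esposito.schnitzer.waldmann:2024a:pre} guarantees that $\hkrHomo$ preserves projectability whenever $\nabla$ is projectable. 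The inductive step uses the same preservation property together with the fact that the Gerstenhaber bracket of projectable bidifferential operators is projectable. Finally, applying the second claim to $\star'$ gives $P^*\Phi(\star'_\red) = I^*\Phi(\star') = I^*[B] = P^*[B_\red]$, and injectivity of $P^*$ on cohomology yields $\Phi(\star'_\red) = [B_\red]$.

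The main obstacle will be the order-by-order verification that $\star_{(\nabla,B)}$ is projectable, which hinges on the projectable HKR theorem of \cite{dippell.esposito.schnitzer.waldmann:2024a:pre} and on the existence of a projectable homogeneous covariant derivative adapted to both the embedding $A_\Normal \hookrightarrow A_\Total$ and the surjective submersion $A_\Normal \twoheadrightarrow A_\red$. A secondary subtlety is ensuring that the adapted-form argument for the second claim can be executed through a projectable equivalence, so that one does not lose the reduction $\star_\red$ when simplifying to the adapted form.
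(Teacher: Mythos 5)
Your overall strategy fails at three concrete points, and the first two are fatal rather than fixable technicalities. First, in part~1 the star product $\star_{\scriptscriptstyle(\nabla,B)}$ of \autoref{prop:ExistenceHomStarProd} is in general \emph{not} projectable, no matter how the homogeneous connection is chosen: its first order is the purely anti-symmetric term $\frac{\I}{2}\{\argument,\argument\}_\KKS$, and the left-ideal condition of \autoref{def:ProjectableStarProd} at order $\hbar$ then forces $\{F,H\}_\KKS \in \vanishing_{E_\Normal}$ for \emph{all} $F\in\Cinfty(E_\Total)$ and $H\in\vanishing_{E_\Normal}$, i.e. $\pi_\KKS^\sharp\bigl(\Ann(TE_\Normal)\bigr)=0$ along $E_\Normal$. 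This fails whenever the characteristic distribution is non-trivial, e.g. in \autoref{ex:NonEquivReducedStarProd}; projectable star products must carry a suitably arranged symmetric part in first order (standard-ordered type), which is why the paper never claims the reference construction is projectable and instead produces the projectable representative by a completely different mechanism: it builds a star product $\tilde\star$ on $A^*_\red$ with class $[B_\red]$, shows $\star\tensor\tilde\star^\opp$ is representable on $A^\vee_\Normal$ via \autoref{Thm: Pull-backcharclass}, and then transports $\star$ by an equivalence extracted from that representation. Second, your closing step "$P^*\Phi(\star'_\red)=P^*[B_\red]$ and injectivity of $P^*$" is unavailable: $P^*\colon\Cohom^2(A_\red)\to\Cohom^2(A_\Normal)$ is not injective in general — in \autoref{ex:NonEquivReducedStarProd} the Fubini--Study class pulls back to zero on $S^{2n+1}$, and this non-injectivity is exactly the phenomenon the theorem (and the subsequent example) is about. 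Knowing only $I^*\Phi(\star')=P^*[B_\red]$ cannot pin down $\Phi(\star'_\red)$; the paper gets $\Phi(\star'_\red)=[B_\red]$ because the reduction of its $\star'$ is \emph{by construction} the chosen $\tilde\star$.

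Third, your proof of part~2 rests on a "projectable analogue of \autoref{lem:adptCan-Form}", i.e. on putting a projectable $\star$ into adapted form (in particular with $C_1=\frac{\I}{2}\{\argument,\argument\}_\KKS$) through a \emph{projectable} equivalence. This is not merely unproven but impossible in the generic situation: projectable equivalences preserve projectability, and by the order-$\hbar$ argument above a projectable star product cannot have purely anti-symmetric first order unless the constraint is degenerate. (The paper states explicitly before \autoref{thm:ClassCommutesWithRed} that in the projectable setting there is no canonical choice of first order.) You do need some control on the reduction when modifying $\star$, since merely homogeneous equivalences change the reduced class, so the gap cannot be waved away by cohomological invariance. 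The paper avoids all of this by proving part~2, too, through the representability machinery: the bimodule $\Cinfty(A^\vee_\Normal)$ gives a representation of $\star\tensor\star_\red^\opp$, and the "only if" direction of \autoref{Thm: Pull-backcharclass} together with $\Phi(\star\tensor\star_\red^\opp)=\pr^*_{A_\Total}\Phi(\star)-\pr^*_{A_\red}\Phi(\star_\red)$ and the pullback along $I\times -P$ yields $I^*\Phi(\star)=P^*\Phi(\star_\red)$ without ever normalizing the first order of $\star$. If you want to salvage a direct computation for part~2, you would have to work with the non-normalized $C_1$ and track the exact correction terms (as in \autoref{eq:HomoEquivIsEquiv}), not assume an adapted form.
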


\begin{proof}
Using \autoref{prop:ExistenceHomStarProd} we can find a homogeneous star product $\tilde\star$
on $A_\red^*$ with characteristic class $[B_\red]$. We consider now the star product 
$\star\tensor \tilde \star^\opp$ on $A_\Total^*\times \cc{A}_\red^*$. We use \autoref{Thm: Pull-backcharclass}, 
to check if $\star\tensor \tilde{\star}^\opp$ is representable on 
$A_\Normal^\vee\to A_\Total^*\times \cc{A}_\red^*$. 
One can now show that the associated Lie algebroid  to this coisotropic is in fact 
\begin{equation*}
	A_\Normal\overset{I\times -P}\longrightarrow A_\Total\times \cc{A}_\red. 
\end{equation*}
We may assume that $\star$ and $\tilde\star$ have the respective Poisson structures in first order,
and thus also 
$\star\tensor \tilde\star^\opp$ 
has the Poisson structure 
$\pi_\KKS - (\pi_{\KKS})_{\red}$ in first order.
Therefore
$\Phi(\star\tensor \tilde\star^\opp)
= \pr_{A_\Total}^*\Phi(\star)-\pr_{A_\red}^*[B_\red]$. 
Note that the minus sign appears since we consider the star product $\tilde{\star}^\opp$.
So now we compute 
\begin{equation*}
	(I\times -P)^*\Phi(\star\tensor \tilde\star^\opp) 
	= 
	(I\times -P)^*(\pr_{A_\Total}^*\Phi(\star)-\pr_{A_\red}^*[B_\red])
	=
	I^*\Phi(\star) - P^*[B_\red]=0.
\end{equation*}
Note that the seemingly missing minus sign in the last equality comes from the fact that 
$(-P)^* = (-1)^kP^* \colon \Secinfty(\Anti^k A_\red) \to \Secinfty(\Anti^k A_\Normal)$.
This means we find a homogeneous representation
$\rho$ of $\star\tensor \tilde{\star}^\opp$.
Let us denote by $\rho_\red$ the induced map and by
$\Cinfty(A_\red^*)\formal{\hbar} \to \Diffop(A_\Normal^\vee)\formal{\hbar}$
\begin{equation*}
	D_\red 
		\colon
	\Cinfty(A_\red^*)\formal{\hbar}
		\ni 
	F \mapsto \rho_\red(F)(1) 
		\in 
	\Cinfty(A_\Normal^\vee)\formal{\hbar}. 
\end{equation*}
We may find a series of homogeneous differential operators
$S =\id+\sum_{k\geq 1}\hbar^kS_k$,
such that 
$D_\red = S\circ (P^\vee)^*$
and we find a new representation of 
$\star\tensor \tilde\star^\opp$ by $\rho'= S^{-1}\rho S$, which fulfils now 
$\rho'_\red(F)(1) = (P^\vee)^*F$ and the map.
Let us now define the induced representation 
$\rho'_\Total\colon \Cinfty(A_\Total^*)\formal{\hbar} \to \Diffop(A_\Normal^\vee)\formal{\hbar}$
and 
\begin{align*}
	D_T
		\colon
	\Cinfty(A_\Total^*)\formal{\hbar} 
		\ni 
	F \mapsto \rho_\Total(F)(1) 
		\in 
	\Cinfty(A_\Normal^\vee)\formal{\hbar}. 
\end{align*}
We can find a series of homogeneous differential operators
$T = \id + \sum_k \hbar^k T_k$
on $A_\Total^*$, such that
$D_T(F) = (I^\vee)^*(T(F))$.
We define now the star product
$\star' = T(T^{-1}(\argument)\star T^{-1}(\argument))$
and 
$\rho''_T = \rho'_T\circ T^{-1}$.
Note that $\rho''_T$ is a representation for $\star'$, such that
$\rho''(F)(1)=(I^\vee)^*(F)$.
Moreover, we have that 
\begin{align*}
	\rho''_T(F)\circ \rho'_\red(G) 
	&= 
	\rho'_T(T^{-1}(F))\circ \rho'_\red(G)
	=
	\rho'(\pr_{A^*_\Total}^*(T^{-1}(F)) \star \tensor \tilde{\star}^{\opp}\pr_{A^*_\red}^*(G))
	\\
	&=
	\rho'(\pr_{A^*_\red}^*(G) \star\tensor\tilde{\star}^{\opp}\pr_{A^*_\Total}^*(T^{-1}(F)))
	=
	\rho'_\red(G)\circ  \rho''_T(F).
\end{align*}
Let us now assume that we have $F,G \in \normalizer_{A_\Normal^\vee}$ with
$\tilde{F},\tilde{G} \in \Cinfty(A_\red^*)$
such that
$(I^\vee)^*F = (P^\vee)^*\tilde{F}$ and
$(I^\vee)^*F = (P^\vee)^*\tilde{F}$,
then 
\begin{align*}
	(I^\vee)^*(F\star' G)
	&= \rho''_\Total(F\star' G)(1)
	= \rho''_\Total(F)(\rho''_\Total(G)(1))
	= \rho''_\Total(F)\bigl((I^\vee)^*(G)\bigr)
	\\
	&= \rho''_\Total(F)\bigl((P^\vee)^*(\tilde G)\bigr)
	= \rho''_\Total (F) (\rho'_\red(\tilde{G})(1))
	\\
	&= \rho'_\red(\tilde{G})(\rho''_\Total (F) (1))
	= \rho'_\red(\tilde{G}) (\rho'_\red(\tilde{F})(1))
	\\
	&= \rho'_\red(G\tilde\star^\opp F)(1)
	= (P^\vee)^*(\tilde{F}\tilde{\star} \tilde{G})
\end{align*}
Note that this shows that $\star'$ is projectable with reduction $\tilde{\star}$ and 
by construction
$\Phi( \tilde\star) = [B_\red]$. 

On the other hand if $\star$ is projectable with reduction $\star_\red$ then we can induce a 
representation as in the discussion right before \autoref{thm:ExProjStar} and see that the 
characteristic classes have to fulfil
$I^*\Phi(\star) = P^*\Phi(\star_\red)$
by the same arguments as in the beginning of the proof. 
\end{proof}

With this theorem we can now find an example of two projectable star products
which are homogeneously equivalent, but not projectably equivalent, and indeed
yield different reduction.

\begin{example}
	\label{ex:NonEquivReducedStarProd}
	Consider the diagram  
	\begin{equation}
		\mathbb{C}^{n+1} \simeq \mathbb{R}^{2n+2}
			\hookleftarrow
		S^{2n+1}
			\twoheadrightarrow
		\mathbb{C}P^n	
	\end{equation}
for $n\geq 1$ and apply the tangent functor to obtain a diagram of Lie algebroids:
    \begin{equation}
		T\mathbb{C}^{n+1} \simeq T\mathbb{R}^{2n+2}
			\hookleftarrow
		TS^{2n+1}
			\twoheadrightarrow
		T\mathbb{C}P^n.
	\end{equation}
We can consider the Weyl-Moyal star product on $T^*\mathbb{C}^{n+1}$ which is a homogeneous 
star product with characteristic class $0$,
see \autoref{ex:HomgeneousStarProds}~\ref{ex:HomgeneousStarProds_Moyal}.
Moreover, we consider the Fubini-Study symplectic
$\omega\in \Omega^2(\mathbb{C}P^2)$.
Then we can construct, using \autoref{thm:ExProjStar}, a star 
product $\star'$ which is equivalent to the Weyl-Moyal star product and reduces to a star product with characteristic
class $[\omega]$ (this is possible since the pull-back of $\omega$ to $S^{2n+1}$ is exact). On the other hand 
we find a star product $\star''$ which is equivalent to the Weyl-Moyal star product and reduces 
to a star product with characteristic class $0$. This means in particular that $\star'$ and 
$\star''$ are equivalent and projectable and have non-equivalent reductions. 
\end{example}
This example and the preceding theorem show that usual homogeneous equivalences are not
suited for studying the reduction of star products.
Thus instead of $\Def_\homogen(A^*_\Total)$ we want to study the set
$\Def_\proj(A^*_\Total)$ of projectable equivalence classes of projectable star products.
Analogously, we expect that $\Def_\proj(A^*_\Total)$ will not correspond to ordinary Lie algebroid cohomology, but to the cohomology
$\Cohom_\proj(A_\Total)$ of the projectable subcomplex.
Thus our situation is as follows:
we have the diagram
\begin{equation}
\begin{tikzcd}
		\Def_\homogen(A_\Total^*)
			\arrow[r,"\Phi","\simeq"']
		& \Cohom^2(A_\Total)
		\\
		\Def_\proj(A_\Total^*)
			\arrow[u,"{[\argument]}"]
			\arrow[d,"\red"']
		& \Cohom^2_\proj(A_\Total)
			\arrow[d,"\red"]
			\arrow[u,"{[\argument]}"']
		\\
		\Def_\homogen(A_\red^*)
			\arrow[r,"\Phi","\simeq"']
		& \Cohom^2(A_\red)
	\end{tikzcd},
\end{equation}
and our aim is to find an isomorphism
$\Def_\proj(A^*_\Total) \simeq \Cohom^2_\proj(A_\Total)$
such that both squares commute.
To do this we proceed as in the homogeneous case and first
consider the relative class of two projectable 
star products.

\begin{proposition}
	\label{prop:projRelativeForm}
Let $A_\Total \hookleftarrow A_\Normal \twoheadrightarrow A_\red$
be a constraint Lie algebroid,
and let $\star$ and $\tilde\star$ be projectable star products on $A^*_\Total$ which coincide up to order 
$1$.
\begin{propositionlist}
	\item Then $\tilde{\Phi}(\star,\tilde\star) \in \Secinfty_\proj(\Anti^2 A^*_\Total)$.
	\item 
	If $\star$ and $\tilde\star$ are projectably equivalent there exists 
	$\alpha \in \Secinfty_\proj(A^*_\Total)$
	such that
	$\tilde{\Phi}(\star,\tilde\star) = \D_A \alpha$.
\end{propositionlist}
\end{proposition}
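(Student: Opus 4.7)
The plan is to leverage Lemma~\ref{lem:RelativeForm} together with the behaviour of $\Jmap$ under reduction. Since $\star$ and $\tilde\star$ are projectable and agree up to order $1$, their reductions $\star_\red$ and $\tilde\star_\red$ on $A^*_\red$ are homogeneous star products that also agree up to order $1$, so $\tilde\Phi(\star_\red,\tilde\star_\red) \in \Secinfty(\Anti^2 A^*_\red)$ is well defined. For part~(i) I will prove the stronger identity
\begin{equation*}
	I^*\tilde\Phi(\star,\tilde\star) = P^*\tilde\Phi(\star_\red,\tilde\star_\red),
\end{equation*}
which in particular witnesses projectability with explicit reduction.

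To verify this I would fix a local frame of $A_\Normal$ obtained from a splitting $A_\Normal \cong A_\red \oplus \ker P$, namely $P$-projectable lifts of a local frame of $A_\red$ together with a local frame of $\ker P$. For sections $s,t \in \Secinfty(A_\Total)$ whose restrictions $i^\sharp s, i^\sharp t$ are $P$-related to $s_\red, t_\red \in \Secinfty(A_\red)$, the identity $A^\vee_\Normal = \Ann(\ker P)$ gives directly $\Jmap(s), \Jmap(t) \in \normalizer_{A^\vee_\Normal}$ with reductions $\Jmap(s_\red), \Jmap(t_\red)$. Projectability of $\star$ and $\tilde\star$ then produces
\begin{equation*}
	(I^\vee)^*(C_2 - \tilde{C}_2)^-\bigl(\Jmap(s),\Jmap(t)\bigr)
	= (P^\vee)^*(C_2^\red - \tilde{C}_2^\red)^-\bigl(\Jmap(s_\red),\Jmap(t_\red)\bigr),
\end{equation*}
and restricting along the zero section $\iota_C \colon C \hookrightarrow A^\vee_\Normal$, using $I^\vee \circ \iota_C = \iota \circ i$ and $P^\vee \circ \iota_C = \iota_\red \circ p$, yields the claimed identity evaluated on $(s_\red,t_\red)$. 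For a frame element $s$ with $i^\sharp s$ a section of $\ker P$, one instead has $\Jmap(s) \in \vanishing_{A^\vee_\Normal}$; then $\Jmap(s)\star\Jmap(t) \in \vanishing_{A^\vee_\Normal}$ by the left ideal property and $\Jmap(t)\star\Jmap(s) \in \vanishing_{A^\vee_\Normal}$ by the two-sided ideal property of $\vanishing_{A^\vee_\Normal}$ inside $\normalizer_{A^\vee_\Normal}$, so every order of both star products vanishes after $(I^\vee)^*$, matching the vanishing of $P^*\tilde\Phi(\star_\red,\tilde\star_\red)$ in these directions.

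Part~(ii) follows quickly from Lemma~\ref{lem:RelativeForm}~\ref{lem:RelativeForm_2}, which already supplies $\alpha \in \Secinfty(A^*_\Total)$ with $S_1 = -\I\alpha^\ver$ and $\tilde\Phi(\star,\tilde\star) = \D_A\alpha$. I only need to argue that $\alpha$ is projectable. Projectability of $S$ forces $S_1 = -\I\alpha^\ver$ to preserve $\vanishing_{A^\vee_\Normal}$, so $\alpha^\ver$ is tangent to $A^\vee_\Normal$ along $A^\vee_\Normal$; since $\alpha^\ver$ is vertical, this is equivalent to $\alpha|_c \in A^\vee_\Normal|_c = \Ann(\ker P|_c)$ for every $c \in C$, which means precisely $I^*\alpha = P^*\alpha_\red$ for some $\alpha_\red \in \Secinfty(A^*_\red)$, i.e. $\alpha \in \Secinfty_\proj(A^*_\Total)$.

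I expect the main obstacle to be part~(i), specifically the dictionary between the purely algebraic projectability of $\star$ (phrased via $\normalizer_{A^\vee_\Normal}$ and $\vanishing_{A^\vee_\Normal}$) and the geometric behaviour of $\Jmap$ and of the zero sections under the dual maps $I^\vee$ and $P^\vee$. Once this dictionary is set up, the verification splits cleanly along the decomposition $A_\Normal \cong A_\red \oplus \ker P$ and the formulas of Lemma~\ref{lem:RelativeForm} together with Definition~\ref{def:ProjectableStarProd} take over.
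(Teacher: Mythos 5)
Part~(i) of your argument is sound and in fact fleshes out what the paper dispatches in one sentence (``evaluating on linear functions in the normalizer''): your stronger identity $I^*\tilde\Phi(\star,\tilde\star) = P^*\tilde\Phi(\star_\red,\tilde\star_\red)$ is correct, and the dictionary $\Jmap(s)\in\normalizer_{A^\vee_\Normal}$ iff $i^\sharp s$ is $P$-projectable, $\Jmap(s)\in\vanishing_{A^\vee_\Normal}$ iff $i^\sharp s\in\Secinfty(\ker P)$, is exactly the right one. One small slip: with $\Jmap(s)\in\vanishing_{A^\vee_\Normal}$, it is $\Jmap(t)\star\Jmap(s)$ that lies in the vanishing ideal by the \emph{left} ideal property, while $\Jmap(s)\star\Jmap(t)$ requires the \emph{two-sided} ideal property inside $\normalizer_{A^\vee_\Normal}$ (which applies because your adapted frame gives $\Jmap(t)\in\normalizer_{A^\vee_\Normal}$ in every case); you have the two roles interchanged. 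Harmless, but worth fixing.

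Part~(ii) has a genuine gap. You only use $S_1(\vanishing_{A^\vee_\Normal})\subseteq\vanishing_{A^\vee_\Normal}$, which gives tangency of $\alpha^\ver$ to $A^\vee_\Normal$ and hence the \emph{pointwise} condition $\alpha\at{c}\in\Ann(\ker P\at{c})$ for all $c\in C$. This is strictly weaker than $\alpha\in\Secinfty_\proj(A^*_\Total)$: it says that $I^*\alpha$ kills $\ker P$ fibrewise, so it descends to a section of $(A_\Normal/\ker P)^*$ over $C$, but it does \emph{not} say that this section is constant along the fibres of $p\colon C\to M_\red$, which is what the existence of $\alpha_\red\in\Secinfty(A^*_\red)$ with $I^*\alpha = P^*\alpha_\red$ requires. (Take $\ker P = 0$ with $p$ a nontrivial submersion: your condition is then vacuous.) The missing ingredient is the other half of projectability of $S$, namely $S_1(\normalizer_{A^\vee_\Normal})\subseteq\normalizer_{A^\vee_\Normal}$: applying it to $\Jmap(s)$ for $s$ with $i^\sharp s$ $P$-projectable and using $\Lie_{\alpha^\ver}\Jmap(s)=\pr^*\alpha(s)$ from \eqref{eq:VerticalLiftProps} shows that $\pr^*\alpha(s)\in\normalizer_{A^\vee_\Normal}$, i.e.\ that $i^*\bigl(\alpha(s)\bigr)$ is $p$-basic; together with the fibrewise condition this yields the required $\alpha_\red$ and hence $\alpha\in\Secinfty_\proj(A^*_\Total)$. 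With that supplement your proof matches the paper's (terse) argument.
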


\begin{proof}
By \autoref{eq:RelativeForm}~\ref{lem:RelativeForm_1}
we know that $\tilde{\Phi}(\star,\tilde\star) \in \Secinfty(\Anti^2 A^*_\Total)$.
Using the projectability of $\star$ and $\tilde\star$ and
evaluating on linear functions in $\normalizer_{A_\Normal^\vee}$
one can see that
$\tilde{\Phi}(\star,\tilde\star) \in \Secinfty_\proj(\Anti^2 A^*_\Total)$.
For the second part we know from \autoref{lem:RelativeForm}~\ref{lem:RelativeForm_2}
that there exists $\alpha \in \Secinfty(A^*_\Total)$
with $S_1 = \Lie_{\alpha^\ver}$
and $\tilde{\Phi}(\star,\tilde\star) = \D_A \alpha$.
Since the equivalence $S$ is projectable one can check that
$\alpha \in \Secinfty(A^*_\Total)$.
\end{proof}
Together with \autoref{cor:ProjectableEquivOrdern}
this allows us to define a projectable relative class.

\begin{definition}[Projectable relative class]
	\label{def:ProjRelativeClass}
	Let $A_\Total \hookleftarrow A_\Normal \twoheadrightarrow A_\red$
	be a constraint Lie algebroid.
	We define the map
	\begin{equation}
		\Phi_\proj
			\colon
		\Def_\proj(A^*) \times \Def_\proj(A^*)
			\to
		\Cohom^2_\proj(A),
		\qquad
		\Phi_\proj([\star],[{\tilde\star}])
			\coloneqq
		[\tilde\Phi(\star',\tilde{\star}')]_\proj,
	\end{equation}
	where $\star'$
	and $\tilde{\star}'$ are any 
	projectable star products
	projectably equivalent to $\star$ and $\tilde\star$ , respectively,
	such that they agree up to order $1$.
	We call $\Phi_\proj([\star],[\tilde\star])$ the
	\emph{projectable relative class}
	of $\star$ and $\tilde\star$.
\end{definition}

Using the same arguments as in \autoref{prop:relativeClass}, one can check that the projectable 
relative class is indeed well-defined.

\begin{lemma}
	\label{lem:ProjRelativeClassEquivalence}
	Let $A_\Total \hookleftarrow A_\Normal \twoheadrightarrow A_\red$
	be a constraint Lie algebroid.
	Two projectable star products $\star$ and $\tilde{\star}$ on $A^*_\Total$
	are projectably equivalent if and only if
	$\Phi_\proj(\star,\tilde\star) = 0$.
\end{lemma}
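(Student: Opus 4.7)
The plan is to parallel the proof of \autoref{lem:RelativeClassEquivalence}, tracking projectability at each step and using \autoref{cor:ProjectableEquivOrdern} in place of \autoref{prop:EquivHomStar} together with \autoref{prop:projectablePotential} to handle the symmetric-part correction in order two.

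The ``only if'' direction is essentially tautological. Given a projectable equivalence between $\star$ and $\tilde\star$, \autoref{cor:ProjectableEquivOrdern} lets me reduce to the case that $\star$ and $\tilde\star$ coincide up to order $1$. Then \autoref{prop:projRelativeForm} produces a primitive $\alpha \in \Secinfty_\proj(A^*_\Total)$ with $\tilde\Phi(\star,\tilde\star) = \D_A \alpha$, so $\Phi_\proj(\star,\tilde\star) = [\tilde\Phi(\star,\tilde\star)]_\proj = 0$ in $\Cohom^2_\proj(A_\Total)$.

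For the converse, assume $\Phi_\proj(\star,\tilde\star) = 0$. By \autoref{cor:ProjectableEquivOrdern} I may arrange that $\star$ and $\tilde\star$ agree up to order $1$. The projectable relative form $\tilde\Phi(\star,\tilde\star)$ then lies in $\Secinfty_\proj(\Anti^2 A^*_\Total)$ by \autoref{prop:projRelativeForm}, and the vanishing of its class furnishes $\alpha \in \Secinfty_\proj(A^*_\Total)$ with $\tilde\Phi(\star,\tilde\star) = \D_A \alpha$. I set $S \coloneqq \id - \hbar \Lie_{\alpha^\ver}$ and form the conjugated star product $\star_S$. The computation from \autoref{lem:RelativeClassEquivalence} yields $\tilde\Phi(\star_S,\tilde\star) = 0$. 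The critical point making this work in the projectable setting is that projectability of $\alpha$ forces $\Lie_{\alpha^\ver}$ to preserve both $\normalizer_{A^\vee_\Normal}$ and $\vanishing_{A^\vee_\Normal}$, so $S$ is a genuine projectable equivalence and $\star_S$ is again projectable.

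At this point $\star_S$ and $\tilde\star$ may still differ in order $2$ by a symmetric, $\del$-closed bidifferential operator, and this difference automatically respects $\normalizer_{A^\vee_\Normal}$ and $\vanishing_{A^\vee_\Normal}$ since both star products are projectable. \autoref{prop:projectablePotential} then provides a projectable primitive of this symmetric cocycle, realising a projectable equivalence up to order $2$. Iterating \autoref{cor:ProjectableEquivOrdern} extends this to a full projectable equivalence. The main obstacle is verifying that each correction term stays inside the projectable subcomplex: the first-order correction through the projectability of $\alpha$, and the second-order correction through the projectable HKR machinery packaged in \autoref{prop:projectablePotential}; from order $3$ onwards homogeneity automatically kills the relevant obstructions in $\hkrInv$ and no further projectable input is required.
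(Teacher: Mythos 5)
Your argument is correct and follows essentially the same route as the paper's proof: the forward direction via \autoref{prop:projRelativeForm}, and the converse by first absorbing the exact antisymmetric part through a projectable primitive $\alpha$ (your explicit $S=\id-\hbar\Lie_{\alpha^\ver}$ step is what the paper compresses into the existence of a suitable $\star'$ with vanishing relative form), then applying \autoref{prop:projectablePotential} to the remaining symmetric closed second-order difference, and finishing with \autoref{cor:ProjectableEquivOrdern}. The only cosmetic quibble is your closing remark that no projectable input is needed beyond order $2$ — the corollary you invoke does itself rely on projectable potentials — but since you use it as a black box this does not affect the proof.
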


\begin{proof}
	One implication is given by \autoref{prop:projRelativeForm}.
	For the other implication suppose 
	$\Phi_\proj(\star,\tilde\star) = 0$.
	Thus there exists $\star'$, which coincides with $\star$ up to order $1$, 
	is equivalent to $\tilde\star$,
	and $\tilde{\Phi}(\star,\star') = 0$.
	Thus $C_2 - C'_2$ is a closed, symmetric
	bidifferential operator, fulfilling the properties
	of \autoref{prop:projectablePotential},
	and therefore exact with
	potential satisfying \eqref{eq:projectablePotential_2}.
	We use this potential as an equivalence, showing that
	$\star$ and $\star'$, and hence $\tilde\star$,
	are projectably equivalent up to order $2$.
	Then by \autoref{cor:ProjectableEquivOrdern}
	they are projectably equivalent.
\end{proof}

For homogeneous star products we were able to find a canonical choice for the first order of every homogeneous star product.
In the projectable setting this is not the case.
Nevertheless, we can choose a reference star product with certain properties, and this choice
will allow us to define a projectable characteristic class.
Thus let us now choose a projectable star product $\star_0$ with $\Phi(\star_0) = 0$, such that 
its reduction satisfies
$\Phi\bigl((\star_0)_\red\bigr) = 0$. 
Note that such a star product $\star_0$
always exists by \autoref{thm:ExProjStar}.
With this we can now define the map 
\begin{equation}
	\Phi_\proj^{\star_0}
		\colon
	\Def_\mathrm{proj}(A_\Total^*)
		\ni
	[\star]_\proj \mapsto \Phi_\proj(\star,\star_0)
		\in
	\Cohom^2_\mathrm{proj}(A_\Total),
\end{equation}
where $\Def_\mathrm{proj}(A_\Total)$ is the set of equivalence classes of projectable star products
modulo projectable equivalences. 

\begin{theorem}
	\label{thm:ClassCommutesWithRed}
Let $A_\Total \hookleftarrow A_\Normal \twoheadrightarrow A_\red$
be a constraint Lie algebroid.
For any projectable star product $\star_0$ on $A^*_\Total$ such that $\Phi(\star_0) = 0 = \Phi((\star_0)_\red)$, the map 
$\Phi_\proj^{\star_0} \colon \Def_\mathrm{proj}(A_\Total^*) \to \Cohom^2_\mathrm{proj}(A_\Total)$ is a bijection 
and makes the diagram 
	\begin{equation}
		\label{diag:QR=0}
	\begin{tikzcd}
		\Def_\homogen(A_\Total^*)
			\arrow[r,"\Phi"]
		& \Cohom^2(A_\Total)
		\\
		\Def_\proj(A_\Total^*)
			\arrow[u,"{[\argument]}"]
			\arrow[r,"\Phi_{\proj}^{\star_0}"]
			\arrow[d,"\red"']
		& \Cohom^2_\proj(A_\Total)
			\arrow[d,"\red"]
			\arrow[u,"{[\argument]}"']
		\\
		\Def_\homogen(A_\red^*)
			\arrow[r,"\Phi"]
		& \Cohom^2(A_\red)
	\end{tikzcd}
	\end{equation}
	commute. 
\end{theorem}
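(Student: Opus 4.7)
The plan is to mimic the proof of \autoref{thm:Classification} in the projectable setting, using \autoref{lem:ProjRelativeClassEquivalence} in place of \autoref{lem:RelativeClassEquivalence} and \autoref{thm:ExProjStar} in place of the ordinary existence result. Throughout, the reference star product $\star_0$ will be used to turn the relative class $\Phi_\proj$ into an absolute invariant, exactly as $\star_{\scriptscriptstyle(\nabla,0)}$ was used in \autoref{def:characteristicClass}.

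For bijectivity, I would first establish the additivity identity $\Phi_\proj(\star,\tilde\star) = \Phi_\proj^{\star_0}(\star) - \Phi_\proj^{\star_0}(\tilde\star)$ by the same argument as \autoref{lem:RelativeForm}~\ref{lem:RelativeForm_3}, now carried out with projectable representatives coinciding up to order $1$ (which exist by \autoref{cor:ProjectableEquivOrdern}). Injectivity then follows from \autoref{lem:ProjRelativeClassEquivalence}: equality of images yields vanishing projectable relative class, hence a projectable equivalence. For surjectivity, given a closed $B \in \Secinfty_\proj(\Anti^2 A_\Total^*)$ I would construct a projectable star product $\star_B$ by perturbing the second-order term of $\star_0$ by $\hkr B^\ver$, exactly as in \autoref{prop:ExistenceHomStarProd}, and then extending order by order as in \autoref{prop:ExtHomStar}. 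Closedness of $B$ guarantees associativity at order three, while projectability of $B$ makes $\hkr B^\ver$ preserve $\normalizer_{A_\Normal^\vee}$ and $\vanishing_{A_\Normal^\vee}$. By construction one has $\tilde\Phi(\star_B, \star_0) = B$, so $\Phi_\proj^{\star_0}(\star_B) = [B]_\proj$.

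For the top square of \eqref{diag:QR=0}, the identity $[\Phi_\proj^{\star_0}(\star)] = \Phi(\star) - \Phi(\star_0) = \Phi(\star)$ is immediate from $\Phi(\star_0) = 0$ and from the compatibility of the projectable relative class with the forgetful map to $\Cohom^2(A_\Total)$. For the bottom square, I would pick projectable representatives $\star'$ and $\star_0'$ of $\star$ and $\star_0$ coinciding up to first order; then $\tilde\Phi(\star', \star_0')$ is projectable by \autoref{prop:projRelativeForm}, and direct inspection of \eqref{eq:RelativeForm} together with the defining property $I^*\alpha = P^*\alpha_\red$ of projectable forms shows that its reduction is $\tilde\Phi(\star'_\red, (\star_0)'_\red)$, whose class equals $\Phi(\star_\red) - \Phi((\star_0)_\red) = \Phi(\star_\red)$ using the hypothesis $\Phi((\star_0)_\red) = 0$.

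The main obstacle will be the order-by-order extension in the surjectivity step: the homotopy $\hkrHomo$ produced by \autoref{thm:classicalHKR} must be chosen compatible with both the homogeneous degree and the projectable structure, so that the correction terms $\hkrHomo(\sum [C_\ell, C_{r-\ell}])$ land in the projectable part of $\HCdiff$. A projectable variant of the HKR deformation retract along the lines of \cite{dippell.esposito.schnitzer.waldmann:2024a:pre}, already invoked in the proof of \autoref{prop:projectablePotential}, should supply such a homotopy and hence allow the extension of $\star_B$ to a full projectable homogeneous star product.
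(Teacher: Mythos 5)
Your treatment of injectivity and of the two squares of \eqref{diag:QR=0} follows the paper's line of argument, but your surjectivity step is where the proof genuinely breaks down, and it is exactly the step the paper organizes its whole proof around. First, the claim that projectability of $B$ makes the perturbation $\hkr B^\ver$ compatible with the constraint structure is unjustified: projectability only means $I^*B = P^*B_\red$, i.e.\ it constrains $B$ on $\Anti^2 A_\Normal$ over $C$ and says nothing about $B(t,s)$ for $s\at{C} \in \Secinfty(\ker P)$ and $t\at{C} \notin \Secinfty(A_\Normal)$. But the left-ideal condition of \autoref{def:ProjectableStarProd} at order $2$ requires precisely $B^\ver\bigl(\D \Jmap(t), \D\Jmap(s)\bigr) = \pr^*B(t,s) \in \vanishing_{A_\Normal^\vee}$ for \emph{all} $t$ and all $s$ with $s\at{C} \in \Secinfty(\ker P)$, which fails for a general projectable representative; you would need an adapted representative (vanishing on $A_\Total \wedge \ker P$ along $C$, in the spirit of \autoref{lem:adptCan-Form}) inside the given projectable cohomology class, and you have not shown such a representative exists. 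Second, even granting this, your order-by-order extension needs, at every order $r \geq 3$, a primitive of a closed tridifferential obstruction that lies again in the subcomplex of operators preserving $\normalizer_{A_\Normal^\vee}$ and $\vanishing_{A_\Normal^\vee}$ and is homogeneous of the right degree. The only projectable-HKR statement available in the paper, \autoref{prop:projectablePotential}, concerns \emph{symmetric closed bidifferential operators} (cochain degree one) and already requires a two-step restriction-to-$E_\Normal$/extension argument; a full homogeneity-compatible deformation retract for the projectable subcomplex in the degree you need is neither proved here nor can it be taken off the shelf by citation, so "should supply such a homotopy" is a genuine missing ingredient, not a routine appeal.

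The paper's own surjectivity proof is structured to avoid exactly these two problems. It does not build the projectable star product by a Fedosov-type projectable recursion; instead it invokes \autoref{thm:ExProjStar} (whose proof runs through representability of $\star \tensor \tilde\star^\opp$ on $A_\Normal^\vee \subseteq A_\Total^* \times \cc{A}_\red^*$ and the representable HKR theorem of \autoref{sec:RepresentableHKR}) to obtain \emph{some} projectable star product with homogeneous class $[B]$ and reduced class $[B_\red]$. It then has to confront a point your plan silently assumes away: the projectable relative class of this star product with respect to $\star_0$ may differ from $[B]_\proj$ by a term $[\D_A\alpha]_\proj$ with $\alpha$ not projectable (only $I^*\D_A\alpha = 0$). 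The paper corrects this by twisting with $\id + \hbar\alpha^\ver$ and then re-establishing projectability through a representation of $\star^\alpha \tensor \star_\red^\opp$ normalized so that $\rho_1(F)(1) = 0$, which guarantees that the re-projectablization does not disturb the first order or the skew part of the second order, whence $\Phi_\proj^{\star_0}(\tilde\star^\alpha) = [B]_\proj$. If you want to keep your more direct construction, you must either prove the existence of adapted projectable representatives together with a projectable, homogeneity-preserving homotopy in the relevant cochain degree, or fall back on the representability route as in the paper.
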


\begin{proof}
Let us start to show the commutativity of the upper square and let therefore $\star$ be a 
projectable star product.
Moreover, let $B \in \Secinfty_\proj(\Anti^2A^*_\Total)$ such that 
$\Phi_{\proj}^{\star_0}(\star) = [B]_\proj$.
Then $\star$ and $\star_{\scriptscriptstyle(\nabla,B)}$ are homogeneously equivalent since they have the same characteristic class.
Thus 
\begin{align*}
	\Phi(\star)
	= \Phi(\star_{\scriptscriptstyle(\nabla,B)})
	= [B]
	= [\Phi_{\proj,\star_0}(\star)].
\end{align*}
For the commutativity of the lower square let again $\star$ be a projectable star product with reduction $\star_\red$.
We may again assume that 
it coincides with $\star_0$ up to order $1$.
Since $\star$ and $\star_0$ coincide up to order 
$1$, also their reductions coincide up to order $1$, this means 
that 
$\Phi(\star_\red) = [\tilde{\Phi}\bigl(\star_\red,(\star_0)_\red\bigr)]$,
where we used that
$\Phi((\star_0)_\red) = 0$ by choice of $\star_0$.
One can check easily that 
$P^*\tilde{\Phi}(\star_\red,(\star_0)_\red)
= I^*\tilde{\Phi}(\star,\star_0)$, which shows the commutativity of the 
lower diagram.

Note that $\Phi_\proj^{\star_0}$ is injective by 
\autoref{lem:ProjRelativeClassEquivalence}.
Now we want to check that it is surjective.
Let therefore be
$B \in \Secinfty_\proj(\Anti^2 A_\Total^*)$
be closed.
Using \autoref{thm:ExProjStar} 
we can find a projectable star product $\star$ which has characteristic class
$[B]\in \Cohom^2(A_\Total)$
with $\Phi(\star_\red) = [B_\red]$.
Then there exists
$\alpha \in \Secinfty(A^*_\Total)$,
such that 
$\Phi_{\proj}^{\star_0}(\star)-[B]_\proj
=[\D_A\alpha]_\proj$.
Note that $[\D\alpha]_\proj$ is not necessarily $0$, since $\alpha$ is not necessarily 
in $\Secinfty_\proj(A^*_\Total)$, but we may assume that $I^*\D\alpha=0$, since 
$\Phi(\star_\red)=[B_\red]$.
Let us denote the reduction of $\star$ by $\star_\red$. 
We define the star product $\star^\alpha$ by 
$F\star^\alpha G \coloneqq S\bigl(S^{-1}(F)\star S^{-1}(G)\bigr)$.
with $S \coloneqq \id + \hbar \alpha^\ver$,
then we have that $\star$ and $\star^\alpha$ coincide up to order 
$1$ since $\alpha^\ver$ is a vector field and $(C^\alpha_2-C_2)^-=-\D_A\alpha$.
We now look at the star product $\star^\alpha\tensor \star_\red^\opp$ on 
$A_\Total^*\times A^*_\red$ and we define the map 
$\rho_1\colon \Cinfty(A_\Total^*)\to \Diffop(A_\Normal^\vee)$ by 
\begin{equation*}
	\rho_1\bigl(\pr_{A_\Total^*}^*(F)\bigr)(G)
	\coloneqq (I^\vee)^*C_1(F,\tilde G)
\end{equation*}
for $F\in \Cinfty(A_\Total^*)$ and 
$\tilde{G}\in \Cinfty(A_\Total^*)$ with $(I^\vee)^*\tilde G=G$
and
\begin{equation*}
	\rho_1 \bigl(\pr_{A_\red^*}^*(G)\bigr)(F)
	\coloneqq (I^\vee)^*C_1(\tilde{F},\tilde{G}),
\end{equation*}
where $(I^\vee)^*\tilde G = (P^\vee)^*G$
and $(I^\vee)^*\tilde F = F$.
Note moreover, that $\rho_1(F)(1)=0$ for all
$F\in \Cinfty(A_\Total^*\times A_\red^*)$.
Then
$\rho = \rho_0 + \hbar\rho_1$
is by definition a representation up to order $1$ of
$\star^\alpha \tensor \star_\red^\opp$,
since $\star$ is projectable with reduction $\star_\red$
and coincides with $\star^\alpha$ up to order $1$.
We want to apply \autoref{prop:RepOrdbyOrd}: let us denote by $C_i^{\alpha,\red}$ the the components of 
$\star^\alpha\tensor \star_\red$. Let $F,G\in \Cinfty(A_\Total^*)$ such that there exist
$\tilde F, \tilde G \in \Cinfty(A_\red^*)$ with $(I^\vee)^*F=(P^\vee)^*\tilde F$ and  $(I^\vee)^*G=(P^\vee)^*\tilde G$, therefore we have 
\begin{align*}
	\label{eq: Ctwovanishes}
	&\phantom{=}(I^\vee\times P^\vee)^* (C_2^{\alpha,\red})^
	-\bigl( \pr_{A_\Total^*}^*(F)-\pr_{A_\red^*}^*(\tilde F), \pr_{A_\Total^*}^*(G)-\pr_{A_\red^*}^*(\tilde F) \bigr)
	\\
	&= (I^\vee)^*(C^\alpha_2)^-(F,G)
		- (P^\vee)^*C_{\red,2}^-(\tilde F, \tilde G)
	\\
	&= (I^\vee)^*\D_A\alpha^\ver(F,G)
	= 0,
\end{align*}
since $I^*\D_A\alpha=0$.
Note that the vanishing ideal of $A^\vee_\Normal\to A_\Total^*\times A_\red^*$
is generated by functions of the form
$\pr_{A_\Total^*}^*(F)-\pr_{A_\red^*}^*(\tilde F)$ with 
$(I^\vee)^*F=(P^\vee)^*\tilde F$, which can be checked locally.
Moreover,by using the HKR-Theorem 
$(I^\vee\times P^\vee)^* (C_2^{\alpha,\red})^-$ is a section of the second exterior power of the 
normal bundle, thus \autoref{eq: Ctwovanishes} is enough to show that
$(I^\vee\times P^\vee)^* (C_2^{\alpha,\red})^-$ 
vanishes on the whole vanishing ideal. 

This means now that we can find a representation
$\rho=\sum_{k\geq 0} \hbar^k\rho_k$ of $\star^\alpha\tensor \star_\red^\opp$
on $A_\Normal^\vee$ with $\rho_1(F)(1)=0$ by using \autoref{prop:RepOrdbyOrd}. 
Now we repeat the proof from \autoref{thm:ExProjStar} to find an equivalence of 
$\star^\alpha$ to a projectable star product $\tilde{\star}^\alpha$, but the condition $\rho_1(F)(1)=0$ ensures that 
the equivalent star product $\tilde\star^\alpha$ has the same first order as $\star^\alpha$ and the 
same skewsymmetrization of the second order. This means that 
\begin{equation*}
	\Phi_\proj^{\star_0}(\tilde\star^\alpha)
	= \Phi_\proj(\tilde\star^\alpha,\star_0)
	= \Phi_\proj(\tilde\star^\alpha,\star)
		+ \Phi_\proj(\star,\star_0)
	= -[\D\alpha]_\proj + [B]_\proj +[\D\alpha]_\proj
	=[B]_\proj,
\end{equation*}
and thus $\Phi_\proj^{\star_0}$ is surjective.
\end{proof}

\begin{remark}[Quantization commutes with reduction]
Let us reinterpret \autoref{thm:ClassCommutesWithRed}
as a statement about quantization.
For this consider the inverse $\Phi^{-1}$ of the characteristic class map, assigning
to every $[B] \in \Cohom^2(A^*_\Total)$ an equivalence class of homogeneous star products
on $A^*_\Total$.
In \autoref{sec:Comparison} we have seen that this mapping can be understood as factoring
through various quantization procedures discussed there.
For example, we have seen that $\Phi^{-1}([B])$ is equal to the Kontsevich class
coming from the formal Poisson structure $\hbar \pi_\KKS + \hbar^2 B$.
With this in mind we can view the commutativity of the upper square in \eqref{diag:QR=0}
as saying that $(\Phi_\proj^{\star_0})^{-1}$ is a quantization that yields
projectable equivalence classes of projectable star products.
Then the commutativity of \eqref{diag:QR=0} says that one can either first reduce the projectable
class and then quantize using $\Phi^{-1}$, or first quantize using
$(\Phi_\proj^{\star_0})^{-1}$
and then reduce the class of projectable star products.
In other word, it says that quantization commutes with reduction. Moreover, note
that this statement does not depend on the choice of the reference star product 
$\star_0$. 
	
\end{remark}
\appendix
\section{Dolgushev-Kontsevich star product is symmetric in order $2$}
	\label{App: Kontsevichissym}
	We follow the notations and conventions of \cite{kraft.schnitzer:2024a}. Recall that the 
	globalized Kontsevich formality is 
	obtained by the zigzag
	 \begin{align}
		T_{\mathrm{poly}}(M)
		  \stackrel{\tau \circ \nu^{-1}}{\longrightarrow}
		  (\Omega(M,\mathcal{T}_\poly),D)
		  \stackrel{\mathcal{U}^B}{\longrightarrow}
		  (\Omega(M,\mathcal{D}_\poly),D+\del_M)
		  \stackrel{\tau\circ \nu^{-1}}{\longleftarrow}
		  (D_{\mathrm{poly}}(M),\del),
	  \end{align}
	where $\Omega(M,\mathcal{T}_\poly),D=\D+[B,\argument])$ and 
	$(\Omega(M,\mathcal{D}_\poly),D+\del_M=\D+ [B,\argument]+\del_M)$ are the Fedosov resolutions 
	of $T_{\mathrm{poly}}(M)$ and $D_{\mathrm{poly}}(M)$, respectively and $U^B$ is the fibre-wise 
	Kontsevich Formality twisted by the connection term $B$. We use the explicit quasi-inverse $P$ of  
	$\tau\circ \nu^{-1}$ constructed in \cite{kraft.schnitzer:2024a} to obtain the explicit 
	globalized Kontsevich Formality by 
	\begin{align}
		F=P\circ U^B \circ \tau \circ \nu^{-1},
	\end{align}
	where the first Taylor coefficient of this  $L_\infty$-quasi isomorphism is given by $\hkr$. 
	This means in particular that the second order of a star product  of a formal Poisson structure 
	$\hbar\pi=\sum_{k=1}^\infty\hbar^k\pi_k\in  \hbar\Secinfty(\Anti^2 TM)\formal{\hbar}$ obtained by the Kontsevich-Dolgushev 
	formality is given by  	
	\begin{align}
		F_1(\pi_2)+\frac{1}{2}F_2(\pi_1\wedge\pi_1)= & 
		F_1(\pi_2) + \frac{1}{2}P_2(U_1^B(\tau \circ \nu^{-1}(\pi_1))\wedge U_1^B(\tau \circ \nu^{-1}(\pi_1)))\\&
		+ \frac{1}{2}P_1(U_2^B(\tau \circ \nu^{-1}(\pi_1)\wedge\tau \circ \nu^{-1}(\pi_1))),
	\end{align}
	where the subscript of the maps indicates the Taylor coefficient of the corresponding $L_\infty$-morphism.
	The last term of the sum actually simplifies to 	
	\begin{align}
		P_1(U_2(\tau \circ \nu^{-1}(\pi_1)\wedge\tau \circ \nu^{-1}(\pi_1))),
	\end{align}
	since $B$ is one-form with values in fibre-wise formal vector fields and $P_1$ sets every element 
	coming from $\Omega^{\geq 1}(M,\mathcal{D}_\poly)$ to zero. Moreover, it is well-known that the 
	Kontsevich star product in $\mathbb{R}^d$ is symmetric in the second order (in fact even in every even order, see \cite{kontsevich:2003a}). 
	This implies that 
	$P_1(U_2^B(\tau \circ \nu^{-1}(\pi_1)\wedge\tau \circ \nu^{-1}(\pi_1)))$ is a symmetric bidifferential operator. 
	As explained in \cite{kraft.schnitzer:2024a} the explicit quasi-inverse $P$ has the feature that $P_k$ is only non-zero, 
	if the inserted terms have a total form-degree of $k-1$, which implies that 
	\begin{align}
		\frac{1}{2}P_2(U_1^B(\tau \circ \nu^{-1}(\pi_1))\wedge U_1^B(\tau \circ \nu^{-1}(\pi_1)))
		=
		P_2(U_2(B\wedge\tau \circ \nu^{-1}(\pi_1))\wedge U_1(\tau \circ \nu^{-1}(\pi_1))).
	\end{align}
	We take now a closer look to $U_2(B\wedge\tau \circ \nu^{-1}(\pi_1))$ which is a vector field and 
	bi-vector field inserted in  
	the second Taylor coefficient of the fibre-wise Kontsevich formality, so the only Kontsevich graphs which may 
	appear in the formula of the star product are 
	\begin{center}
	\begin{minipage}{0.3\textwidth}
	 \begin{tikzpicture}[
		   decoration = {markings,
						 mark=at position .5 with {\arrow{Stealth[length=2mm]}}},
		   dot/.style = {circle, fill, inner sep=2.4pt, node contents={},
						 label=#1},
	every edge/.style = {draw, postaction=decorate}
							]
	\node (a) at (0,3) [dot];
	\node (b) at (1,0) [dot];
	\node (c) at (4,4) [dot];
	\path   (c) edge (a)    (c) edge (b)    
			(a) edge (b)   ;
	\draw[-] (0,0) -- (4,0);        
		\end{tikzpicture}
	\end{minipage}
	\begin{minipage}{0.3\textwidth}
	\begin{tikzpicture}[
		   decoration = {markings,
						 mark=at position .5 with {\arrow{Stealth[length=2mm]}}},
		   dot/.style = {circle, fill, inner sep=2.4pt, node contents={},
						 label=#1},
	every edge/.style = {draw, postaction=decorate}
							]
	\node (a) at (0,3) [dot];
	\node (b) at (1,0) [dot];
	\node (c) at (4,4) [dot];
	\path   (c) edge[bend left] (a)    (c) edge (b)    
			(a) edge[bend left] (c)   ;
	\draw[-] (0,0) -- (4,0);        
		\end{tikzpicture}
	\end{minipage}
	\end{center}
	The points on the horizontal lines are the exceptional vertices. Both graphs have the feature that there is one vertex with exactly one incoming 
	and exactly one outgoing arrow and due to \cite[Section 7.3.3.1]{kontsevich:2003a} the weights of these graphs vanish. This means in particular 
	
\begin{corollary}
		\label{Cor: SkewKon}
	Let $\hbar\pi=\sum_{k=1}^\infty\hbar^k\pi_k\in  \hbar\Secinfty(\Anti^2 TM)\formal{\hbar}$ be a formal Poisson structure. The skew symmetrization of 
	the second order of the Kontsevich-Dolgushev star product corresponding to $\hbar\pi$ is given by $F_1(\pi_2)$.   
\end{corollary}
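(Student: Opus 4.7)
The plan is to expand the second order of $F(\hbar\pi)$ along the $L_\infty$-morphism $F = P \circ U^B \circ \tau \circ \nu^{-1}$ and show that every contribution beyond $F_1(\pi_2)$ is a \emph{symmetric} bidifferential operator, so that the skew-symmetrization of the entire second order reduces to $F_1(\pi_2) = \hkr(\pi_2)$, which is already skew because $\pi_2$ is a bivector. Taylor-expanding $F$ on the Maurer-Cartan element $\hbar\pi$ produces the three summands displayed at the beginning of the appendix, so it suffices to verify symmetry of the two $F_2$-type summands separately.

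First I would handle the summand $\tfrac{1}{2} P_1(U_2^B(\tau \circ \nu^{-1}(\pi_1) \wedge \tau \circ \nu^{-1}(\pi_1)))$. Using the fact that $P_1$ annihilates $\Omega^{\geq 1}(M,\mathcal{D}_\poly)$ and that $B$ is a $1$-form with values in fibrewise vector fields, every summand in $U_2^B$ that picks up a factor of $B$ is killed, reducing the expression to $P_1(U_2(\tau \circ \nu^{-1}(\pi_1) \wedge \tau \circ \nu^{-1}(\pi_1)))$. This is the fibrewise Kontsevich second-order term applied twice to the same bivector, hence symmetric by Kontsevich's well-known symmetry of the flat star product in even orders, and this symmetry descends under the fibrewise-pointwise map $P_1$.

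Next I would treat $\tfrac{1}{2} P_2(U_1^B(\tau \circ \nu^{-1}(\pi_1)) \wedge U_1^B(\tau \circ \nu^{-1}(\pi_1)))$. The degree-matching rule for $P$ from \cite{kraft.schnitzer:2024a} (namely $P_k$ is nonzero only on inputs of total form-degree $k-1$) forces exactly one of the two $U_1^B$ factors to contribute its $B$-twisted piece $U_2(B \wedge \tau \circ \nu^{-1}(\pi_1))$, so the summand collapses to $P_2(U_2(B \wedge \tau \circ \nu^{-1}(\pi_1)) \wedge U_1(\tau \circ \nu^{-1}(\pi_1)))$. The critical observation is that $U_2(B \wedge \tau \circ \nu^{-1}(\pi_1))$ itself vanishes: the inputs are a vector field and a bivector field, and the only admissible Kontsevich graphs with two aerial vertices of these valences are the two triangle-type graphs drawn in the appendix; each possesses an aerial vertex with exactly one incoming and one outgoing edge, so by \cite[Section 7.3.3.1]{kontsevich:2003a} their weights vanish.

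The main obstacle is the graph enumeration and the clean appeal to Kontsevich's weight-vanishing lemma: one must confirm that no other admissible graph with two aerial vertices (one of valence $1$, one of valence $2$) contributes, in particular ruling out graphs with edges going only to ground vertices by matching valences, and verifying that the two surviving graphs genuinely satisfy the hypothesis of the vanishing lemma. Once these two symmetry/vanishing statements are in place, the decomposition gives $C_2 = F_1(\pi_2) + (\textnormal{symmetric terms})$, and anti-symmetrizing yields the claim.
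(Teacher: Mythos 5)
Your proposal follows essentially the same route as the paper's appendix: the same three-term expansion of the second order via $F = P \circ U^B \circ \tau \circ \nu^{-1}$, the same reduction of the $P_1$-term to the symmetric fibrewise Kontsevich contribution, and the same collapse of the $P_2$-term via the form-degree rule to $P_2(U_2(B\wedge\tau\circ\nu^{-1}(\pi_1))\wedge U_1(\tau\circ\nu^{-1}(\pi_1)))$, killed by enumerating the two admissible graphs and invoking Kontsevich's vanishing of weights for graphs with a vertex having exactly one incoming and one outgoing edge. The proof is correct and matches the paper's argument, including the graph enumeration you flag as the main point to verify.
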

	
\section{Representable HKR Theorem}
\label{sec:RepresentableHKR}

Following \cite[]{dippell.esposito.schnitzer.waldmann:2024a:pre}
we collect some results.
Let $E_\Total \to M$ be a vector bundle together with a linear submanifold
$I \colon E_\Normal \hookrightarrow E_\Total$ over a submanifold
$i \colon C \hookrightarrow M$,
and denote the normal bundle of $E_\Normal$ in $i^\#E_\Total$ by
$\nu(E_\Total,E_\Normal)$.
Consider the differential operators $\Diffop(E_\Normal)$ on the total space of the
vector bundle $E_\Normal$ as a $\Cinfty(E_\Total)$-bimodule with bimodule
structure given by
\begin{equation}
	(F \acts D)(G) \coloneqq I^*F \cdot D(G)
	\qquad\text{and}\qquad
	(D \racts F)(G) \coloneqq D(I^*F \cdot G),
\end{equation}
for $F \in \Cinfty(E_\Total)$ and $G \in \Cinfty(E_\Normal)$.
With this we obtain the Hochschild complex
$\HCdiff^\bullet\bigl(E_\Total,\Diffop(E_\Normal)\bigr)$
with differential given by
\begin{equation}
	\label{eq:RepresentableDifferential}
\begin{split}
	(\del D)(F_0, \dotsc, F_n)(G)
	&= 
	I^*F_0 \cdot D(F_1,\dotsc,F_n)(G)
	\\
	&\phantom{=}+ \sum_{i=0}^{n} (-1)^{i+1}
		D(F_0,\dotsc,F_i \cdot F_{i+1}, \dotsc, F_n)(G)
	\\
	&\phantom{=}+ (-1)^{n+1} D(F_0,\dotsc,F_{n-1})(I^*F_n \cdot G)
\end{split}
\end{equation}
for $F_0, \dotsc, F_n \in \Cinfty(E_\Total)$
and $G \in \Cinfty(E_\Normal)$.

\begin{theorem}[Representable HKR Theorem]
    \label{thm:RepresentableHKR}%
    Let $\nabla$ be a torsion-free homogeneous covariant
    de\-ri\-vative on $E_\Total$.
	Then there exist
	\begin{equation}
		\hkrHomo
			\colon
		\HCdiff^\bullet\bigl(E_\Total,\Diffop(E_\Normal)\bigr)
			\to
		\HCdiff^{\bullet-1}\bigl(E_\Total,\Diffop(E_\Normal)\bigr)
	\end{equation}
	and
	\begin{equation}
		\hkrInv
			\colon
		\HCdiff^\bullet\bigl(E_\Total,\Diffop(E_\Normal)\bigr)
			\to
		\Anti^{\bullet} \Secinfty\bigl(\nu(E_\Total,E_\Normal)\bigr)
	\end{equation}
	such that
    \begin{equation}
        \label{eq:representableHKRDeformationRetract}
        \begin{tikzcd}[column sep = large]
            \Anti^{\bullet} \Secinfty\bigl(\nu(E_\Total,E_\Normal)\bigr)
            	\arrow[r,"\hkr", shift left = 3pt]
            & \bigl( \HCdiff^\bullet\bigl(E_\Total,\Diffop(E_\Normal)\bigr),\del \bigr)
            	\arrow[l,"\hkrInv", shift left = 3pt]
            	\arrow[loop,
				out = -30,
				in = 30,
				distance = 30pt,
				start anchor = {[yshift = -7pt]east},
				end anchor = {[yshift = 7pt]east},
				"\hkrHomo"{swap}
				]
        \end{tikzcd}
    \end{equation}
    is a deformation retract, i.e. the following holds:
    \begin{theoremlist}
		\item \label{thm:representableHKR_1}
			We have $\hkrInv \circ \hkr = \id$.
		\item \label{thm:representableHKR_2}
			We have
			$\del \hkrHomo + \hkrHomo \del
			= \id - \hkr \circ \hkrInv$.
    \end{theoremlist}
	Moreover, the maps $\hkr$, $\hkrInv$ and $\hkrHomo$
	preserve the degrees of homogeneity.
\end{theorem}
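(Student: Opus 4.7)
The strategy is to adapt the proof of the classical HKR Theorem (\autoref{thm:classicalHKR}) to the representable setting, with the homogeneous torsion-free covariant derivative $\nabla$ on $E_\Total$ as the essential geometric input. First I would use the fibrewise exponential map of $\nabla$ in the directions normal to $E_\Normal$ to identify equivariantly an open neighborhood of $E_\Normal$ inside $E_\Total$ with an open neighborhood of the zero section of the normal bundle $\nu \coloneqq \nu(E_\Total,E_\Normal)$. Since $\nabla$ is homogeneous, this identification intertwines the respective Euler vector fields and yields a canonical Taylor expansion of $F \in \Cinfty(E_\Total)$ with coefficients in $\Secinfty(\Sym^\bullet \nu^*)$, compatible with the homogeneous grading.

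With this identification in place, I would define $\hkr$ by the natural antisymmetrization formula: for $X_1 \wedge \cdots \wedge X_n \in \Secinfty(\Anti^n \nu)$ and $F_1,\ldots,F_n \in \Cinfty(E_\Total)$, let $\hkr(X_1 \wedge \cdots \wedge X_n)(F_1,\ldots,F_n)$ be the multiplication operator on $E_\Normal$ by $\frac{1}{n!} \sum_{\sigma \in S_n} \sign(\sigma)\, I^*\bigl(\widetilde{X}_{\sigma(1)} F_1\bigr) \cdots I^*\bigl(\widetilde{X}_{\sigma(n)} F_n\bigr)$, where each $\widetilde{X}_i$ is any extension of $X_i$ to a vector field on $E_\Total$; the total antisymmetrization removes the dependence on the extensions. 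Dually, $\hkrInv$ is defined by extracting the principal symbol in the normal directions: for $D \in \HCdiff^n\bigl(E_\Total, \Diffop(E_\Normal)\bigr)$, one evaluates on $n$-tuples of functions in $\vanishing_{E_\Normal}$, applies the resulting element of $\Diffop(E_\Normal)$ to the constant function $1$, and antisymmetrizes; the result, viewed as an alternating multilinear form on normal covectors, is a section of $\Anti^n \nu$. The homotopy $\hkrHomo$ arises from a Koszul-type contracting homotopy on the fibrewise symmetric-algebra model obtained from the Taylor identification, globalized by a Fedosov-type argument mirroring the proof of \autoref{thm:classicalHKR}.

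The identity $\hkrInv \circ \hkr = \id$ is routine from the definitions, since antisymmetrization is idempotent and the extensions drop out. The main obstacle is the homotopy identity $\del \hkrHomo + \hkrHomo \del = \id - \hkr \circ \hkrInv$ in the presence of the bimodule coefficients in $\Diffop(E_\Normal)$: the Hochschild differential \eqref{eq:RepresentableDifferential} mixes pullback via $I^*$ on the left with composition by a multiplication operator on the right, so one must verify that both extra boundary contributions are absorbed correctly by the Koszul contraction. The plan is to establish the identity fibrewise on the polynomial model, where it follows from the standard acyclicity of the Koszul complex of $\Secinfty(\Sym^\bullet \nu^*)$ with coefficients in $\Diffop(E_\Normal)$, and then transfer the result globally along the Taylor identification. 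The preservation of homogeneous degrees is automatic, since each ingredient — the exponential of $\nabla$, the pullback $I^*$, the antisymmetrization, and the Koszul contraction — is homogeneous of degree zero, either trivially or by the homogeneity of $\nabla$ already exploited in \autoref{prop:HomoHKRTheorem}.
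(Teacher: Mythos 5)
The paper itself does not prove \autoref{thm:RepresentableHKR}: the appendix merely collects the statement from \cite{dippell.esposito.schnitzer.waldmann:2024a:pre}, so your sketch has to stand on its own, and while its overall architecture (fibrewise model plus Fedosov-type globalization using the homogeneous $\nabla$, with Euler-equivariance giving the homogeneity claim) is the right kind of approach, it has a genuine gap already at the level of the structure maps. Your $\hkr$ is not well defined: replacing an extension $\widetilde X$ of a normal section by $\widetilde X + Y$ with $Y$ tangent to $E_\Normal$ changes $I^*(\widetilde X F)$ by $Y\bigl(I^*F\bigr)$, which is nonzero for general $F \in \Cinfty(E_\Total)$, and the antisymmetrization only permutes which vector field hits which argument, so it cannot cancel these tangential contributions (they drop out only when all arguments lie in $\vanishing_{E_\Normal}$); already for $n=1$ there is nothing to antisymmetrize. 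You must fix extensions canonically, e.g.\ through the normal-bundle identification or a splitting induced by $\nabla$, and then verify the chain-map property and $\hkrInv \circ \hkr = \id$ for that specific choice. Dually, your $\hkrInv$ does not exist as a map on the whole complex: for a general, not necessarily closed cochain $D$, the antisymmetrized value $D(F_1,\dotsc,F_n)(1)$ with $F_i \in \vanishing_{E_\Normal}$ depends on higher jets of the $F_i$ along $E_\Normal$ (for $n=1$ take $D(F)$ to be multiplication by $I^*(\Delta F)$ with $\Delta$ a second-order operator vanishing on constants), so it does not factor through the conormal differentials and hence defines no element of $\Anti^{n}\Secinfty\bigl(\nu(E_\Total,E_\Normal)\bigr)$. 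That factorization holds only for closed cochains and is derived at the end of \autoref{sec:RepresentableHKR} as a \emph{consequence} of the homotopy identity; it cannot serve as the definition of the cochain-level retraction.

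The second, more substantial gap is the homotopy identity itself. The content of the theorem is that the whole complex $\HCdiff^\bullet\bigl(E_\Total,\Diffop(E_\Normal)\bigr)$ contracts onto the very small space $\Anti^\bullet\Secinfty\bigl(\nu(E_\Total,E_\Normal)\bigr)$, i.e.\ that all tangential multivector directions and all genuinely differential-operator-valued directions of the coefficients are absorbed by $\hkrHomo$. You delegate this to the standard acyclicity of the Koszul complex of $\Secinfty(\Sym^\bullet \nu^*)$ with coefficients in $\Diffop(E_\Normal)$, but this is not a standard Koszul statement: the coefficients form a noncommutative bimodule whose left action is multiplication by $I^*F$ and whose right action is composition with multiplication by $I^*F$, and one needs an explicit contraction of the resulting two-sided bar-type complex \eqref{eq:RepresentableDifferential} which in addition restricts to the subcomplex vanishing on constants, respects the differentiability constraints, and is equivariant under the Euler vector field so that homogeneity degrees are preserved. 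Carrying out this local computation (which is exactly where one sees why only the exterior algebra of the normal directions survives) and its globalization is precisely the content of the cited preprint; without it your argument re-asserts the theorem rather than proving it.
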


In particular, for $D \in \HCdiff^2\bigl(E_\Total, \Diffop(E_\Normal)\bigr)$
we know that $\hkrInv(D) \in \Anti^2\Secinfty(\nu(E_\Total,E_\Normal))$,
and hence $\hkrInv(D)$ is determined by evaluating at elements in 
the annihilator of $E_\Normal$. Let us assume that $D$ is closed, then we get by
using the homotopy equation we get 
\begin{align}
	D(F,G)(1)=(\partial\hkrHomo D)(F,G)(1)+ \hkr\hkrInv(D)(F,G)(1)
\end{align}
for all $F,G\in \Cinfty(E_\Total)$. If we now assume that $F,G\in\vanishing_{E_\Normal}$, then we get 
by anti-symmetrizing
\begin{align}
	\hkr\hkrInv(D)(F,G)(1)= \frac{1}{2}\bigl(D(F,G)(1)-D(G,F)(1)\bigr),
\end{align}
since $(\partial\hkrHomo D)(F,G)(1)$ is symmetric evaluated on functions contained in 
the vanishing ideal. 
Moreover, using the explicit form of the map $\hkr$ on $F$ and $G$, this yields 
\begin{equation}
	\hkrInv(D)\left(\D F\at{C} , \D G\at{C}\right)
	= D(F,G)(1) - D(G,F)(1).
\end{equation}

\printbibliography[heading=bibintoc]

\newpage
\listoffixmes

\end{document}